\definecolor{ColBlack}{RGB}{0,0,0} % Black.
\definecolor{ColWhite}{RGB}{255,255,255} % White.
\definecolor{ColAA}{HTML}{92C271}
\definecolor{ColAB}{HTML}{509421}
\definecolor{ColAC}{HTML}{406235}
\definecolor{ColBA}{HTML}{EDAE77}
\definecolor{ColBB}{HTML}{C07A3E}
\definecolor{ColBC}{HTML}{8C7460}
\numberwithin{equation}{subsection}
\def\l@section{\@tocline{1}{3pt}{1pc}{5pc}{}}
\def\l@subsection{\@tocline{2}{2pt}{2pc}{5pc}{}}
\newtheorem{Theorem}{Theorem}[subsection]
\newtheorem{Proposition}[Theorem]{Proposition}
\newtheorem{Lemma}[Theorem]{Lemma}
\renewcommand{\leq}{\leqslant}
\renewcommand{\geq}{\geqslant}
\newcommand{\ColAA}[1]{\textcolor{ColAA}{#1}}
\newcommand{\ColAB}[1]{\textcolor{ColAB}{#1}}
\newcommand{\Hide}[1]{\ColAA{\tt HIDEN}}
\newcommand{\Def}[1]{\ColAB{\em #1}}
\newcommand{\Par}[1]{\mleft(#1\mright)}
\newcommand{\Bra}[1]{\mleft\{#1\mright\}}
\newcommand{\Han}[1]{\mleft[#1\mright]}
\newcommand{\OEIS}[1]{\href{http://oeis.org/#1}{{\bf #1}}}
\tikzstyle{Centering}=[{baseline={([yshift=-0.5ex]current
\tikzstyle{MarkAA}=[draw=ColAA!80,fill=ColAA!8]
\tikzstyle{MarkAB}=[draw=ColAB!80,fill=ColAB!8]
\tikzstyle{MarkAC}=[draw=ColAC!80,fill=ColAC!8]
\tikzstyle{MarkBA}=[draw=ColBA!80,fill=ColBA!8]
\tikzstyle{MarkBB}=[draw=ColBB!80,fill=ColBB!8]
\tikzstyle{MarkBC}=[draw=ColBC!80,fill=ColBC!8]
\tikzstyle{Node}=[circle,MarkAA,inner sep=1pt,
\tikzstyle{Edge}=[draw=ColBB!80,cap=round,thick,rounded corners=2.5pt]
\tikzstyle{Leaf}=[rectangle,MarkBC,inner sep=0pt,minimum size=1mm,thick]
\tikzstyle{NodeST}=[font=\scriptsize]
\newcommand{\N}{\mathbb{N}}
\newcommand{\Z}{\mathbb{Z}}
\newcommand{\Q}{\mathbb{Q}}
\newcommand{\K}{\mathbb{K}}
\newcommand{\TreeS}{\mathfrak{s}}
\newcommand{\TreeR}{\mathfrak{r}}
\newcommand{\TreeT}{\mathfrak{t}}
\newcommand{\SetP}{\mathcal{P}}
\newcommand{\SetQ}{\mathcal{Q}}
\newcommand{\SetR}{\mathcal{R}}
\newcommand{\SetS}{\mathcal{S}}
\newcommand{\Predicate}{\mathbb{P}}
\newcommand{\Monoid}{\mathcal{M}}
\newcommand{\Deg}{\mathrm{deg}}
\newcommand{\GeneratingSet}{\mathfrak{G}}
\newcommand{\MinimalConsistent}{\mathfrak{M}}
\newcommand{\PrefixSet}{\mathrm{Pref}}
\newcommand{\SuffixSet}{\mathrm{Suff}}
\newcommand{\Trace}{\mathrm{tr}}
\newcommand{\AlphabetQ}{\mathbf{Q}}
\newcommand{\SeriesStatistics}{\mathit{H}}
\newcommand{\NormalForms}{\mathcal{N}}
\newcommand{\Statistics}{\mathrm{s}}
\newcommand{\Angle}[1]{\left\langle#1\right\rangle}
\newcommand{\AAngle}[1]{\Angle{\Angle{#1}}}
\newcommand{\Support}[1]{\mathrm{Supp}\Par{#1}}
\newcommand{\CharacteristicSeries}[1]{\mathrm{ch}\Par{#1}}
\newcommand{\PredicateSeries}[1]{\mathrm{pr}\Par{#1}}
\newcommand{\GeneratingSeries}{\mathcal{G}}
\newcommand{\HilbertSeries}{\mathcal{H}}
\newcommand{\FactorSeries}{\mathbf{F}}
\newcommand{\FactorPrefixSeries}{\mathbf{F}}
\newcommand{\FactorPrefixEnumSeries}{\mathit{F}}
\newcommand{\FactorEnumSeries}{\mathit{F}}
\newcommand{\Enum}{\mathrm{en}}
\newcommand{\SeriesF}{\mathbf{f}}
\newcommand{\SeriesG}{\mathbf{g}}
\newcommand{\Operad}{\mathcal{O}}
\newcommand{\Corolla}[1]{\mathfrak{c}\Par{#1}}
\newcommand{\Eval}{\mathrm{ev}}
\newcommand{\T}{\mathbf{T}}
\newcommand{\SyntaxTrees}{\mathbf{S}}
\newcommand{\Unit}{\mathds{1}}
\newcommand{\FCat}[1]{\mathbf{FCat}^{\Par{#1}}}
\newcommand{\Schr}{\mathbf{Schr}}
\newcommand{\Motz}{\mathbf{Motz}}
\newcommand{\DA}{\mathbf{DA}}
\newcommand{\TwoAs}{{2\mathbf{As}}}
\newcommand{\NCT}{\mathbf{NCT}}
\newcommand{\Dipt}{\mathbf{Dipt}}
\newcommand{\Dup}{\mathbf{Dup}}
\newcommand{\FreeOperad}{\mathbf{FO}}
\newcommand{\GenA}{\mathtt{a}}
\newcommand{\GenB}{\mathtt{b}}
\newcommand{\GenC}{\mathtt{c}}
\DeclareMathOperator{\Congr}{\equiv}
\DeclareMathOperator{\Product}{\star}
\DeclareMathOperator{\Composition}{\bar{\circ}}
\DeclareMathOperator{\Factor}{\preccurlyeq_{\mathrm{f}}}
\DeclareMathOperator{\NotFactor}{\cancel{\Factor}}
\DeclareMathOperator{\Prefix}{\preccurlyeq_{\mathrm{p}}}
\DeclareMathOperator{\NotPrefix}{\cancel{\Prefix}}
\DeclareMathOperator{\Suffix}{\preccurlyeq_{\mathrm{s}}}
\DeclareMathOperator{\WordsSum}{\dotplus}
\DeclareMathOperator{\Rew}{\rightarrow}
\DeclareMathOperator{\RewContext}{\Rightarrow}
\DeclareMathOperator{\RewContextRT}{\overset{\ast}{\RewContext}}
\newcommand{\Leaf}{{
\begin{tikzpicture}[xscale=.2,yscale=.22,Centering]
    \draw[Edge,thick](0,0)--(0,-1);
\end{tikzpicture}
}}
\newcommand{\CorollaTwo}[1]{{
\begin{tikzpicture}[Centering,xscale=0.16,yscale=0.24]
    \node(0)at(0.00,-1.50){};
    \node(2)at(2.00,-1.50){};
    \node[NodeST](1)at(1.00,0.00){$#1$};
    \draw[Edge](0)--(1);
    \draw[Edge](2)--(1);
    \node(r)at(1.00,1.5){};
    \draw[Edge](r)--(1);
\end{tikzpicture}
}}
\newcommand{\CorollaThree}[1]{{
\begin{tikzpicture}[Centering,xscale=0.19,yscale=0.19]
    \node(0)at(0.00,-2.00){};
    \node(2)at(1.00,-2.00){};
    \node(3)at(2.00,-2.00){};
    \node[NodeST](1)at(1.00,0.00){$#1$};
    \draw[Edge](0)--(1);
    \draw[Edge](2)--(1);
    \draw[Edge](3)--(1);
    \node(r)at(1.00,1.75){};
    \draw[Edge](r)--(1);
\end{tikzpicture}
}}
\newcommand{\TreeLeft}[2]{{
\begin{tikzpicture}[Centering,xscale=0.26,yscale=0.26]
    \node(0)at(0.00,-3.33){};
    \node(2)at(2.00,-3.33){};
    \node(4)at(4.00,-1.67){};
    \node[NodeST](1)at(1.00,-1.67){$#2$};
    \node[NodeST](3)at(3.00,0.00){$#1$};
    \draw[Edge](0)--(1);
    \draw[Edge](1)--(3);
    \draw[Edge](2)--(1);
    \draw[Edge](4)--(3);
    \node(r)at(3.00,1.5){};
    \draw[Edge](r)--(3);
\end{tikzpicture}
}}
\newcommand{\TreeRight}[2]{{
\begin{tikzpicture}[Centering,xscale=0.26,yscale=0.26]
    \node(0)at(0.00,-1.67){};
    \node(2)at(2.00,-3.33){};
    \node(4)at(4.00,-3.33){};
    \node[NodeST](1)at(1.00,0.00){$#1$};
    \node[NodeST](3)at(3.00,-1.67){$#2$};
    \draw[Edge](0)--(1);
    \draw[Edge](2)--(3);
    \draw[Edge](3)--(1);
    \draw[Edge](4)--(3);
    \node(r)at(1.00,1.25){};
    \draw[Edge](r)--(1);
\end{tikzpicture}
}}
\title[Tree series and patterns]
    {Tree series and pattern avoidance in syntax trees}
\keywords{Tree; Pattern avoidance; Enumeration; Formal power series;
Operad.}
\subjclass[2010]{05C05, 05A15, 32A05, 18D50.}
\date{\today}
\author{Samuele Giraudo}
\address{\scriptsize LIGM, Univ. Gustave Eiffel, CNRS, ESIEE Paris, F-$77454$
Marne-la-Vall\'ee, France}
\email{samuele.giraudo@u-pem.fr}
\begin{document}

%%%%%%%%%%%%%%%%%%%%%%%%%%%%%%%%%%%%%%%%%%%%%%%%%%%%%%%%%%%%%%%%%%%%%%%%
%%%%%%%%%%%%%%%%%%%%%%%%%%%%%%%%%%%%%%%%%%%%%%%%%%%%%%%%%%%%%%%%%%%%%%%%
%%%%%%%%%%%%%%%%%%%%%%%%%%%%%%%%%%%%%%%%%%%%%%%%%%%%%%%%%%%%%%%%%%%%%%%%
\begin{abstract}
    A syntax tree is a planar rooted tree where internal nodes are labeled on a graded set
    of generators. There is a natural notion of occurrence of contiguous pattern in such
    trees. We describe a way, given a set of generators $\GeneratingSet$ and a set of
    patterns $\SetP$, to enumerate the trees constructed on $\GeneratingSet$ and avoiding
    $\SetP$.  The method is built around inclusion-exclusion formulas forming a system of
    equations on formal power series of trees, and composition operations of trees.  This
    does not require particular conditions on the set of patterns to avoid. We connect this
    result to the theory of nonsymmetric operads. Syntax trees are the elements of such free
    structures, so that any operad can be seen as a quotient of a free operad.  Moreover, in
    some cases, the elements of an operad can be seen as trees avoiding some patterns.
    Relying on this, we use operads as devices for enumeration: given a set of combinatorial
    objects we want enumerate, we endow it with the structure of an operad, understand it in
    term of trees and pattern avoidance, and use our method to count them. Several examples
    are provided.
\end{abstract}

\maketitle

\begin{small}
\tableofcontents
\end{small}

%%%%%%%%%%%%%%%%%%%%%%%%%%%%%%%%%%%%%%%%%%%%%%%%%%%%%%%%%%%%%%%%%%%%%%%%
%%%%%%%%%%%%%%%%%%%%%%%%%%%%%%%%%%%%%%%%%%%%%%%%%%%%%%%%%%%%%%%%%%%%%%%%
%%%%%%%%%%%%%%%%%%%%%%%%%%%%%%%%%%%%%%%%%%%%%%%%%%%%%%%%%%%%%%%%%%%%%%%%
\section*{Introduction}
The general problem of counting objects is of primary importance in
combinatorics. Several approaches exist for this purpose. Here, we focus
on a strategy having an algebraic flavor consisting in endowing a set
$X$ of combinatorial objects with operations in order to form algebraic
structures. The point is that the algebraic study of $X$ (minimal
generating sets, relations between generators, morphisms, {\em etc.})
leads to enumerative results. Operads~\cite{LV12,Men15,Gir18} are
very interesting algebraic structures in this context. They encode the
notion of substitution of combinatorial objects into another one.
Moreover, formal power series on operads~\cite{Cha02,Cha08} or colored
operads~\cite{Gir19} (that are generalizations of usual formal power
series) offer new methods for enumerative questions. This work is
intended to be an application of the theory of operads to combinatorics
and enumeration. As our main contribution, we provide a tool to express the
Hilbert series (that is, the generating series of the sequence of the
dimensions) of an operad $\Operad$ given one of its presentations by
generators and relations (satisfying some properties). When $\Operad$ is
an operad on combinatorial objects, this provides a description of the
generating series of these objects. This is a consequence of the fact
that some operads can be seen as operads of trees avoiding some
patterns, and is related with the deeper notions of Koszul
operads~\cite{GK94}, Poincaré-Birkhoff-Witt bases for
operads~\cite{Hof10}, and Gröbner bases for operads~\cite{DK10}.
\smallbreak

Our main combinatorial result consists, given a set $\SetP$ of syntax trees (that are some
labeled planar rooted trees, where labels are taken from a fixed alphabet), to obtain a
system of equations expressing the formal sum of all the trees avoiding $\SetP$ (as
connected components in the trees). The presented solution is built around an
inclusion-exclusion formula and uses simple grafting operations on trees.  By considering
the projection of this system to usual formal power series, this leads to a system of
equations for the generating series of the trees avoiding $\SetP$. It is also possible to
add formal parameters into these systems to enumerate the trees according to some
statistics.  Methods to enumerate trees that avoid some patterns have been already provided
in~\cite{Row10} for the case of unlabeled binary trees, \cite{GPPT12} for the case of
unlabeled ternary trees, in~\cite{Par93} and~\cite{Lod05} for the case of patterns with two
internal nodes, and in~\cite{KP15} for the general case.  Our method differs from the latter
one both in the approach and in the obtained systems of equations. Indeed, in the previous
reference, the authors use combinatorics and enumerative properties to show algebraic
properties on operads (while in the present work, we use operads to obtain combinatorial
results and to count objects). Moreover, we obtain different systems of equations and we
have fewer requirements about the sets $\SetP$ to avoid (they can be infinite, and some of
their trees can be factors of other ones).  Note that there exist several notions of pattern
avoidance in trees~\cite{DKS20}. We focus here on contiguous patterns.
\smallbreak

This document is organized as follows.
Section~\ref{sec:syntax_trees_series} contains elementary definitions
about syntax trees and formal power series of trees. In
Section~\ref{sec:pattern_avoidance}, we state the main question of the
paper about pattern avoidance in syntax trees and provide its main
result (Theorem~\ref{thm:system_trees_avoiding}). Next,
Section~\ref{sec:operads_for_enumeration} is devoted to explaining how to
use nonsymmetric set-operads as devices for the enumeration of families
of combinatorial objects. For this, the elementary definitions about
operads are exposed, and a notion of refined Hilbert series of an operad
depending on an orientation of one its presentations by generators and
relations is provided. The document ends with Section~\ref{sec:examples}
where examples of enumerations of some families of combinatorial objects
are reviewed. We provide, by using several operad structures, the
enumeration of bicolored Schröder trees, Schröder trees, binary trees,
$m$-trees, noncrossing trees, Motzkin paths, and directed animals. The
tools provided by this work highlight some (already known or not)
statistics on these objects.
\medbreak

%%%%%%%%%%%%%%%%%%%%%%%%%%%%%%%%%%%%%%%%%%%%%%%%%%%%%%%%%%%%%%%%%%%%%%%%
\subsubsection*{General notations and conventions}
For any integers $a$ and $c$, $[a, c]$ denotes the set
$\Bra{b \in \N : a \leq b \leq c}$ and $[n]$, the set $[1, n]$. The
cardinality of a finite set $S$ is denoted by $\# S$. If $u$ is a word,
its length is denoted by $|u|$ and for any position $i \in [|u|]$, $u_i$
is the $i$-th letter of $u$.
\medbreak

%%%%%%%%%%%%%%%%%%%%%%%%%%%%%%%%%%%%%%%%%%%%%%%%%%%%%%%%%%%%%%%%%%%%%%%%
%%%%%%%%%%%%%%%%%%%%%%%%%%%%%%%%%%%%%%%%%%%%%%%%%%%%%%%%%%%%%%%%%%%%%%%%
%%%%%%%%%%%%%%%%%%%%%%%%%%%%%%%%%%%%%%%%%%%%%%%%%%%%%%%%%%%%%%%%%%%%%%%%
\section{Syntax trees and series} \label{sec:syntax_trees_series}
This section begins by setting elementary definitions about syntax
trees, the main combinatorial objects of this work. Next, we present
series on trees and some operations on them.
\medbreak

%%%%%%%%%%%%%%%%%%%%%%%%%%%%%%%%%%%%%%%%%%%%%%%%%%%%%%%%%%%%%%%%%%%%%%%%
%%%%%%%%%%%%%%%%%%%%%%%%%%%%%%%%%%%%%%%%%%%%%%%%%%%%%%%%%%%%%%%%%%%%%%%%
\subsection{Syntax trees}
We set here elementary definitions and notations about graded sets,
syntax trees, and composition operations on syntax trees.
\medbreak

%%%%%%%%%%%%%%%%%%%%%%%%%%%%%%%%%%%%%%%%%%%%%%%%%%%%%%%%%%%%%%%%%%%%%%%%
\subsubsection{Graded sets and alphabets}
A \Def{graded set} is a set $\GeneratingSet$ admitting a decomposition
as a disjoint union of the form
\begin{equation}
    \GeneratingSet := \bigsqcup_{n \geq 1} \GeneratingSet(n).
\end{equation}
In the sequel, we shall call such a set an \Def{alphabet} and each of
its elements a \Def{letter}. The \Def{arity} $|x|$ of a letter $x$ of
$\GeneratingSet$ is the unique integer $n$ such that
$x \in \GeneratingSet(n)$. We say that $\GeneratingSet$ is
\Def{combinatorial} if all the $\GeneratingSet(n)$ are finite for all
$n \geq 1$. In this case, the \Def{generating series} of
$\GeneratingSet$ is the series $\GeneratingSeries_{\GeneratingSet}(t)$
defined by
\begin{equation}
    \GeneratingSeries_{\GeneratingSet}(t)
    := \sum_{x \in \GeneratingSet} t^{|x|}.
\end{equation}
The coefficient of $t^n$ in $\GeneratingSeries_{\GeneratingSet}(t)$ is
$\# \GeneratingSet(n)$ for any $n \geq 1$.
\medbreak

%%%%%%%%%%%%%%%%%%%%%%%%%%%%%%%%%%%%%%%%%%%%%%%%%%%%%%%%%%%%%%%%%%%%%%%%
\subsubsection{Syntax trees} \label{subsubsec:syntax_trees}
Let $\GeneratingSet$ be an alphabet. A \Def{$\GeneratingSet$-tree} (also
called \Def{$\GeneratingSet$-syntax tree}) is a planar rooted tree such
that its internal nodes of arity $k$ are labeled by letters of arity $k$
of $\GeneratingSet$. Unless otherwise specified, we use in the sequel
the standard terminology (such as \Def{node}, \Def{internal node},
\Def{leaf}, \Def{edge}, \Def{root}, \Def{child}, {\em etc.}) about
planar rooted trees~\cite{Knu97} (see also~\cite{Gir18}). Let us set
here some definitions about $\GeneratingSet$-trees. The \Def{degree}
$\Deg(\TreeT)$ (resp. \Def{arity} $|\TreeT|$) of a
$\GeneratingSet$-tree $\TreeT$ is its number of internal nodes (resp.
leaves). The only $\GeneratingSet$-tree of degree $0$ and arity $1$ is
the \Def{leaf} and is denoted by $\Leaf$. For any
$\GenA \in \GeneratingSet(k)$, the \Def{corolla} labeled by $\GenA$ is
the tree $\Corolla{\GenA}$ consisting in one internal node labeled by
$\GenA$ having as children $k$ leaves. Given an internal node $u$ of
$\TreeT$, due to the planarity of $\TreeT$, the children of $u$ are
totally ordered from left to right and are thus indexed from $1$ to the
arity $k$ of $u$. By assuming that the arity of the root of $\TreeT$ is
$k$, for any $i \in [k]$, the \Def{$i$-th subtree} of $\TreeT$ is the
tree $\TreeT(i)$ rooted at the $i$-th child of $\TreeT$. Similarly, the
leaves of $\TreeT$ are totally ordered from left to right and thus are
indexed from $1$ to~$|\TreeT|$. The \Def{height} of $\TreeT$ is the
number of internal nodes belonging to a longest path connecting the root
of $\TreeT$ to one of its leaves.
\medbreak

For instance, if
\begin{math}
    \GeneratingSet :=  \GeneratingSet(2) \sqcup \GeneratingSet(3)
\end{math}
with
$\GeneratingSet(2) := \{\GenA, \GenB\}$ and
$\GeneratingSet(3) := \{\GenC\}$,
\begin{equation}
    \TreeT :=
    \begin{tikzpicture}[Centering,xscale=.26,yscale=.15]
        \node(0)at(0.00,-6.50){};
        \node(10)at(8.00,-9.75){};
        \node(12)at(10.00,-9.75){};
        \node(2)at(2.00,-6.50){};
        \node(4)at(3.00,-3.25){};
        \node(5)at(4.00,-9.75){};
        \node(7)at(5.00,-9.75){};
        \node(8)at(6.00,-9.75){};
        \node[NodeST](1)at(1.00,-3.25){$\GenB$};
        \node[NodeST](11)at(9.00,-6.50){$\GenA$};
        \node[NodeST](3)at(3.00,0.00){$\GenC$};
        \node[NodeST](6)at(5.00,-6.50){$\GenC$};
        \node[NodeST](9)at(7.00,-3.25){$\GenA$};
        \node(r)at(3.00,2.75){};
        \draw[Edge](0)--(1);
        \draw[Edge](1)--(3);
        \draw[Edge](10)--(11);
        \draw[Edge](11)--(9);
        \draw[Edge](12)--(11);
        \draw[Edge](2)--(1);
        \draw[Edge](4)--(3);
        \draw[Edge](5)--(6);
        \draw[Edge](6)--(9);
        \draw[Edge](7)--(6);
        \draw[Edge](8)--(6);
        \draw[Edge](9)--(3);
        \draw[Edge](r)--(3);
    \end{tikzpicture}
\end{equation}
is a $\GeneratingSet$-tree of degree $5$, arity $8$, and height $3$. Its
root is labeled by $\GenC$ and has arity $3$. Moreover, we have
\begin{equation}
    \TreeT(1) =
    \begin{tikzpicture}[Centering,xscale=.25,yscale=.29]
        \node(0)at(0.00,-1.50){};
        \node(2)at(2.00,-1.50){};
        \node[NodeST](1)at(1.00,0.00){$\GenB$};
        \draw[Edge](0)--(1);
        \draw[Edge](2)--(1);
        \node(r)at(1.00,1.5){};
        \draw[Edge](r)--(1);
    \end{tikzpicture}
    = \Corolla{\GenB},
    \qquad
    \TreeT(2) = \Leaf,
    \qquad
    \TreeT(3) =
    \begin{tikzpicture}[Centering,xscale=.24,yscale=.17]
        \node(0)at(0.00,-5.33){};
        \node(2)at(1.00,-5.33){};
        \node(3)at(2.00,-5.33){};
        \node(5)at(4.00,-5.33){};
        \node(7)at(6.00,-5.33){};
        \node[NodeST](1)at(1.00,-2.67){$\GenC$};
        \node[NodeST](4)at(3.00,0.00){$\GenA$};
        \node[NodeST](6)at(5.00,-2.67){$\GenA$};
        \draw[Edge](0)--(1);
        \draw[Edge](1)--(4);
        \draw[Edge](2)--(1);
        \draw[Edge](3)--(1);
        \draw[Edge](5)--(6);
        \draw[Edge](6)--(4);
        \draw[Edge](7)--(6);
        \node(r)at(3.00,2.50){};
        \draw[Edge](r)--(4);
    \end{tikzpicture}\,.
\end{equation}
\medbreak

Given an alphabet $\GeneratingSet$, we denote by
$\SyntaxTrees(\GeneratingSet)$ the graded set of all the
$\GeneratingSet$-trees where $\SyntaxTrees(\GeneratingSet)(n)$ is the
subset of $\SyntaxTrees(\GeneratingSet)$ restrained on the
$\GeneratingSet$-trees of arity $n$. Observe that when $\GeneratingSet$
is combinatorial and
\begin{math}
    \GeneratingSet(1) = \emptyset$, $\SyntaxTrees(\GeneratingSet)
\end{math}
is combinatorial. In this case, the generating series
$\GeneratingSeries_{\SyntaxTrees(\GeneratingSet)}(t)$ of
$\SyntaxTrees(\GeneratingSet)$, counting its elements with respect to
their arities, satisfies
\begin{equation}
    \GeneratingSeries_{\SyntaxTrees(\GeneratingSet)}(t) =
    t + \GeneratingSeries_{\GeneratingSet}\Par{
        \GeneratingSeries_{\SyntaxTrees(\GeneratingSet)}(t)}.
\end{equation}
\medbreak

%%%%%%%%%%%%%%%%%%%%%%%%%%%%%%%%%%%%%%%%%%%%%%%%%%%%%%%%%%%%%%%%%%%%%%%%
\subsubsection{Compositions of syntax trees}
\label{subsubsec:partial_composition_trees}
Given $\TreeT, \TreeS \in \SyntaxTrees(\GeneratingSet)$ and
$i \in [|\TreeT|]$, the \Def{partial composition}
$\TreeT \circ_i \TreeS$ is the $\GeneratingSet$-tree obtained by
grafting the root of $\TreeS$ onto the $i$-th leaf of $\TreeT$. For
instance, by considering the previous graded set $\GeneratingSet$ of
Section~\ref{subsubsec:syntax_trees}, one has
\begin{equation} \label{equ:example_partial_composition_trees}
    \begin{tikzpicture}[Centering,xscale=.2,yscale=.19]
        \node(0)at(0.00,-5.00){};
        \node(2)at(2.00,-7.50){};
        \node(4)at(4.00,-7.50){};
        \node(6)at(6.00,-5.00){};
        \node(8)at(7.00,-5.00){};
        \node(9)at(8.00,-5.00){};
        \node[NodeST](1)at(1.00,-2.50){$\GenB$};
        \node[NodeST](3)at(3.00,-5.00){$\GenA$};
        \node[NodeST](5)at(5.00,0.00){$\GenA$};
        \node[NodeST](7)at(7.00,-2.50){$\GenC$};
        \draw[Edge](0)--(1);
        \draw[Edge](1)--(5);
        \draw[Edge](2)--(3);
        \draw[Edge](3)--(1);
        \draw[Edge](4)--(3);
        \draw[Edge](6)--(7);
        \draw[Edge](7)--(5);
        \draw[Edge](8)--(7);
        \draw[Edge](9)--(7);
        \node(r)at(5.00,2){};
        \draw[Edge](r)--(5);
    \end{tikzpicture}
    \enspace \circ_5 \enspace
    \begin{tikzpicture}[Centering,xscale=.25,yscale=.21]
        \node(0)at(0.00,-2.00){};
        \node(2)at(1.00,-2.00){};
        \node(3)at(2.00,-4.00){};
        \node(5)at(4.00,-4.00){};
        \node[NodeST](1)at(1.00,0.00){$\GenC$};
        \node[NodeST](4)at(3.00,-2.00){$\GenB$};
        \draw[Edge](0)--(1);
        \draw[Edge](2)--(1);
        \draw[Edge](3)--(4);
        \draw[Edge](4)--(1);
        \draw[Edge](5)--(4);
        \node(r)at(1.00,2){};
        \draw[Edge](r)--(1);
    \end{tikzpicture}
    \enspace = \enspace
    \begin{tikzpicture}[Centering,xscale=.22,yscale=.16]
        \node(0)at(0.00,-6.00){};
        \node(10)at(8.00,-9.00){};
        \node(11)at(9.00,-12.00){};
        \node(13)at(11.00,-12.00){};
        \node(14)at(12.00,-6.00){};
        \node(2)at(2.00,-9.00){};
        \node(4)at(4.00,-9.00){};
        \node(6)at(6.00,-6.00){};
        \node(8)at(7.00,-9.00){};
        \node[NodeST](1)at(1.00,-3.00){$\GenB$};
        \node[NodeST](12)at(10.00,-9.00){$\GenB$};
        \node[NodeST](3)at(3.00,-6.00){$\GenA$};
        \node[NodeST](5)at(5.00,0.00){$\GenA$};
        \node[NodeST](7)at(9.00,-3.00){$\GenC$};
        \node[NodeST](9)at(8.00,-6.00){$\GenC$};
        \draw[Edge](0)--(1);
        \draw[Edge](1)--(5);
        \draw[Edge](10)--(9);
        \draw[Edge](11)--(12);
        \draw[Edge](12)--(9);
        \draw[Edge](13)--(12);
        \draw[Edge](14)--(7);
        \draw[Edge](2)--(3);
        \draw[Edge](3)--(1);
        \draw[Edge](4)--(3);
        \draw[Edge](6)--(7);
        \draw[Edge](7)--(5);
        \draw[Edge](8)--(9);
        \draw[Edge](9)--(7);
        \node(r)at(5.00,2.5){};
        \draw[Edge](r)--(5);
    \end{tikzpicture}.
\end{equation}
\medbreak

Furthermore, given $\TreeT \in \SyntaxTrees(\GeneratingSet)$ and
$\TreeS_1, \dots, \TreeS_{|\TreeT|} \in \SyntaxTrees(\GeneratingSet)$,
the \Def{full composition}
$\TreeT \circ \Han{\TreeS_1, \dots, \TreeS_{|\TreeT|}}$ is the
$\GeneratingSet$-tree obtained by grafting $\TreeS_i$ onto the $i$-th
leaf of $\TreeT$, simultaneously for all the $i \in [|\TreeT|]$. For
instance, by considering the previous graded set $\GeneratingSet$, one
has
\begin{equation}
    \begin{tikzpicture}[Centering,xscale=.25,yscale=.26]
        \node(0)at(0.00,-1.67){};
        \node(2)at(2.00,-3.33){};
        \node(4)at(4.00,-3.33){};
        \node[NodeST](1)at(1.00,0.00){$\GenB$};
        \node[NodeST](3)at(3.00,-1.67){$\GenA$};
        \draw[Edge](0)--(1);
        \draw[Edge](2)--(3);
        \draw[Edge](3)--(1);
        \draw[Edge](4)--(3);
        \node(r)at(1.00,1.75){};
        \draw[Edge](r)--(1);
    \end{tikzpicture}
    \enspace \circ \enspace
    \Han{
    \begin{tikzpicture}[Centering,xscale=.21,yscale=.18]
        \node(0)at(0.00,-4.67){};
        \node(2)at(2.00,-4.67){};
        \node(4)at(4.00,-4.67){};
        \node(6)at(6.00,-4.67){};
        \node[NodeST](1)at(1.00,-2.33){$\GenA$};
        \node[NodeST](3)at(3.00,0.00){$\GenA$};
        \node[NodeST](5)at(5.00,-2.33){$\GenB$};
        \draw[Edge](0)--(1);
        \draw[Edge](1)--(3);
        \draw[Edge](2)--(1);
        \draw[Edge](4)--(5);
        \draw[Edge](5)--(3);
        \draw[Edge](6)--(5);
        \node(r)at(3.00,2){};
        \draw[Edge](r)--(3);
    \end{tikzpicture},
    \Leaf,
    \begin{tikzpicture}[Centering,xscale=.24,yscale=.23]
        \node(0)at(0.00,-2.00){};
        \node(2)at(1.00,-2.00){};
        \node(3)at(2.00,-2.00){};
        \node[NodeST](1)at(1.00,0.00){$\GenC$};
        \draw[Edge](0)--(1);
        \draw[Edge](2)--(1);
        \draw[Edge](3)--(1);
        \node(r)at(1.00,1.75){};
        \draw[Edge](r)--(1);
    \end{tikzpicture}}
    \enspace = \enspace
    \begin{tikzpicture}[Centering,xscale=.2,yscale=.14]
        \node(0)at(0.00,-10.50){};
        \node(10)at(10.00,-10.50){};
        \node(12)at(11.00,-10.50){};
        \node(13)at(12.00,-10.50){};
        \node(2)at(2.00,-10.50){};
        \node(4)at(4.00,-10.50){};
        \node(6)at(6.00,-10.50){};
        \node(8)at(8.00,-7.00){};
        \node[NodeST](1)at(1.00,-7.00){$\GenA$};
        \node[NodeST](11)at(11.00,-7.00){$\GenC$};
        \node[NodeST](3)at(3.00,-3.50){$\GenA$};
        \node[NodeST](5)at(5.00,-7.00){$\GenB$};
        \node[NodeST](7)at(7.00,0.00){$\GenB$};
        \node[NodeST](9)at(9.00,-3.50){$\GenA$};
        \draw[Edge](0)--(1);
        \draw[Edge](1)--(3);
        \draw[Edge](10)--(11);
        \draw[Edge](11)--(9);
        \draw[Edge](12)--(11);
        \draw[Edge](13)--(11);
        \draw[Edge](2)--(1);
        \draw[Edge](3)--(7);
        \draw[Edge](4)--(5);
        \draw[Edge](5)--(3);
        \draw[Edge](6)--(5);
        \draw[Edge](8)--(9);
        \draw[Edge](9)--(7);
        \node(r)at(7.00,3){};
        \draw[Edge](r)--(7);
    \end{tikzpicture}.
\end{equation}
\medbreak

By a slight but convenient abuse of notation, we shall in some cases
simply write $\GenA \circ_i \GenB$ instead of
$\Corolla{\GenA} \circ_i \Corolla{\GenB}$, and write
$\GenA \circ \Han{\TreeS_1, \dots, \TreeS_{|\GenA|}}$ instead of
$\Corolla{\GenA} \circ \Han{\TreeS_1, \dots, \TreeS_{|\GenA|}}$ where
$\GenA$ and $\GenB$ are letters of $\GeneratingSet$ and  $\TreeS_1$,
\dots, $\TreeS_{|\GenA|}$ are $\GeneratingSet$-trees. Moreover, when the
context is clear, we shall even write $\GenA$ for~$\Corolla{\GenA}$.
\medbreak

%%%%%%%%%%%%%%%%%%%%%%%%%%%%%%%%%%%%%%%%%%%%%%%%%%%%%%%%%%%%%%%%%%%%%%%%
%%%%%%%%%%%%%%%%%%%%%%%%%%%%%%%%%%%%%%%%%%%%%%%%%%%%%%%%%%%%%%%%%%%%%%%%
\subsection{Series on combinatorial sets}
We set here elementary definitions and notations about formal power
series on arbitrary sets and about series on trees.
\medbreak

%%%%%%%%%%%%%%%%%%%%%%%%%%%%%%%%%%%%%%%%%%%%%%%%%%%%%%%%%%%%%%%%%%%%%%%%
\subsubsection{Series on a set}
Let $\K$ be any field of characteristic zero. It is convenient, for
enumerative purposes, to consider that $\K$ is simply the field~$\Q$.
\medbreak

If $X$ is a set, the linear span of $X$ is denoted by $\K \Angle{X}$.
The dual space of $\K \Angle{X}$, denoted by $\K \AAngle{X}$ is by
definition the space of the maps $\SeriesF : X \to \K$, called
\Def{$X$-series}. Let $\SeriesF \in \K \AAngle{X}$. The coefficient
$\SeriesF(x)$ of any $x \in X$ in $\SeriesF$ is denoted by
$\Angle{x, \SeriesF}$. The \Def{support} of $\SeriesF$ is the set
\begin{math}
    \Support{\SeriesF} := \Bra{x \in X : \Angle{x, \SeriesF} \ne 0}.
\end{math}
We say that $x \in X$ \Def{appears} in $\SeriesF$ if
$x \in \Support{\SeriesF}$. By exploiting the vector space structure of
$\K \AAngle{X}$, any $X$-series $\SeriesF$ expresses as
\begin{equation} \label{equ:definition_series_as_sums}
    \SeriesF = \sum_{x \in X} \Angle{x, \SeriesF} x.
\end{equation}
This notation using potentially infinite sums of elements of $X$
accompanied with coefficients of $\K$ is common in the context of formal
power series. In the sequel, we shall define and handle some $X$-series
using the notation~\eqref{equ:definition_series_as_sums}.
\medbreak

If $\Predicate$ is a predicate on $X$, that is, for any $x \in X$,
either $\Predicate(x)$ holds or $\Predicate(x) $ does not hold, the
\Def{predicate series} of $\Predicate$ is the series
\begin{equation}
    \PredicateSeries{\Predicate} :=
    \sum_{\substack{x \in X \\ \Predicate(x)}} x.
\end{equation}
Moreover, for any subset $Y$ of $X$, the \Def{characteristic series}
$\CharacteristicSeries{Y}$ of $Y$ is the predicate series of
$\Predicate$ where $\Predicate(y)$ holds if and only if $y \in Y$. If
$\Predicate_1$ and $\Predicate_2$ are two predicates on $X$, we denote
by $\Predicate_1 \wedge \Predicate_2$ (resp.
$\Predicate_1 \vee \Predicate_2$) the predicate wherein, for any
$x \in X$, $\Par{\Predicate_1 \wedge \Predicate_2}(x)$ (resp.
$\Par{\Predicate_1 \vee \Predicate_2}(x)$) holds if and only if
$\Predicate_1(x)$ and $\Predicate_2(x)$ (resp. $\Predicate_1(x)$ or
$\Predicate_2(x)$) hold.
\medbreak

\begin{Lemma} \label{lem:series_predicate}
    Let $X$ be a set and $\Predicate_1$, \dots, $\Predicate_n$,
    $n \geq 1$, be predicates on $X$. In $\K \AAngle{X}$, we have
    \begin{equation} \label{equ:series_predicate}
        \PredicateSeries{\bigvee_{i \in [n]} \Predicate_i} =
        \sum_{\substack{
            \ell \geq 1 \\
            \Bra{i_1, \dots, i_\ell} \subseteq [n]
        }}
        (-1)^{1 + \ell} \;
        \PredicateSeries{\bigwedge_{j \in [\ell]} \Predicate_{i_j}}.
    \end{equation}
\end{Lemma}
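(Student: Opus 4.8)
The plan is to prove the identity in $\K \AAngle{X}$ by showing that, for every $x \in X$, the coefficient of $x$ is the same on both sides. Fix $x \in X$ and set $S_x := \Bra{i \in [n] : \Predicate_i(x) \text{ holds}}$ and $k := \# S_x$. On the left-hand side, the definition of the predicate series gives $\Angle{x, \PredicateSeries{\bigvee_{i \in [n]} \Predicate_i}} = 1$ if $\Par{\bigvee_{i \in [n]} \Predicate_i}(x)$ holds, that is if $S_x \ne \emptyset$, and $0$ otherwise; equivalently, this coefficient equals $1$ when $k \geq 1$ and $0$ when $k = 0$.

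Next I would compute the coefficient of $x$ on the right-hand side. For a nonempty $\Bra{i_1, \dots, i_\ell} \subseteq [n]$, one has $\Angle{x, \PredicateSeries{\bigwedge_{j \in [\ell]} \Predicate_{i_j}}} = 1$ exactly when $\Predicate_{i_j}(x)$ holds for all $j \in [\ell]$, that is when $\Bra{i_1, \dots, i_\ell} \subseteq S_x$, and $0$ otherwise. Reading the outer sum as running once over each nonempty subset $T \subseteq [n]$ with $\ell = \# T$, the coefficient of $x$ on the right-hand side is thus $\sum_{\emptyset \ne T \subseteq S_x} (-1)^{1 + \# T}$, and grouping these subsets by their cardinality rewrites it as $\sum_{\ell = 1}^{k} \binom{k}{\ell} (-1)^{1 + \ell}$.

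It then remains to evaluate this sum. If $k = 0$ it is empty, hence equal to $0$, which matches the left-hand side. If $k \geq 1$, the binomial theorem applied to $(1 - 1)^k$ yields $\sum_{\ell = 0}^{k} \binom{k}{\ell} (-1)^\ell = 0$, so $\sum_{\ell = 1}^{k} \binom{k}{\ell} (-1)^{1 + \ell} = -\Par{\sum_{\ell = 0}^{k} \binom{k}{\ell} (-1)^\ell - 1} = 1$, which again matches the left-hand side. As the coefficients agree for every $x \in X$, the two series coincide. No serious difficulty arises here: the only points needing a little care are the bookkeeping of the outer index set (each nonempty subset of $[n]$ must be counted exactly once, with $\ell$ its cardinality) and the degenerate case $k = 0$ where the support condition forces both sides to vanish.
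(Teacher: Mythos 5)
Your proof is correct, but it takes a different route from the paper's. The paper proves the lemma by induction on $n$: it first verifies the case $n = 2$ by a truth-table check of the four possible combinations of $\Predicate_1(x)$ and $\Predicate_2(x)$, notes that the case $n = 1$ is trivial, and then invokes induction to obtain the general alternating sum. You instead compare the coefficient of each fixed $x \in X$ on both sides directly: identifying the contributing subsets as the nonempty subsets of $S_x = \Bra{i \in [n] : \Predicate_i(x)}$ and evaluating $\sum_{\ell = 1}^{k} \binom{k}{\ell} (-1)^{1+\ell}$ via the binomial theorem. Your version is self-contained and yields the $n$-variable formula in a single computation, with the only delicate points (each nonempty subset counted once, and the degenerate case $k = 0$) handled explicitly; the paper's version involves less computation per step but leaves the inductive step implicit, requiring the reader to see how writing $\bigvee_{i \in [n]} \Predicate_i$ as $\bigl(\bigvee_{i \in [n-1]} \Predicate_i\bigr) \vee \Predicate_n$ and distributing conjunctions regenerates the full alternating sum over subsets. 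Both are standard instances of inclusion--exclusion and both are valid; no gap in either.
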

\begin{proof}
    Let
    \begin{math}
        \SeriesF :=
        \PredicateSeries{\Predicate_1} + \PredicateSeries{\Predicate_2}
        - \PredicateSeries{\Predicate_1 \wedge \Predicate_2}
    \end{math}
    obtained from the right member of~\eqref{equ:series_predicate} in
    the particular case where $n = 2$. In $\SeriesF$, each $x \in X$ has
    a coefficient $0$ or $1$ according to the following rules:
    \begin{enumerate}[label = {\arabic*.}]
        \item if not $\Predicate_1(x)$ and not $\Predicate_2(x)$, then
        the coefficient of $x$ is $0 + 0 - 0 = 0$;
        \item if $\Predicate_1(x)$ and not $\Predicate_2(x)$, then the
        coefficient of $x$ is $1 + 0 - 0 = 1$;
        \item if not $\Predicate_1(x)$ and $\Predicate_2(x)$, then the
        coefficient of $x$ is $0 + 1 - 0 = 1$;
        \item if $\Predicate_1(x)$ and $\Predicate_2(x)$, then the
        coefficient of $x$ is $1 + 1 - 1 = 1$.
    \end{enumerate}
    Therefore, $\SeriesF$ is the series
    $\PredicateSeries{\Predicate_1 \vee \Predicate_2}$, so
    that~\eqref{equ:series_predicate} holds for $n = 2$. Moreover,
    since~\eqref{equ:series_predicate} obviously holds when $n = 1$, by
    induction on $n$, the inclusion-exclusion formula of the
    statement of the lemma follows.
\end{proof}
\medbreak

%%%%%%%%%%%%%%%%%%%%%%%%%%%%%%%%%%%%%%%%%%%%%%%%%%%%%%%%%%%%%%%%%%%%%%%%
\subsubsection{Series on syntax trees}
Let $\GeneratingSet$ be an alphabet. We call
\Def{$\GeneratingSet$-tree series} each series of
$\K \AAngle{\SyntaxTrees(\GeneratingSet)}$. For any $n \geq 1$, the
\Def{composition product} of $\GeneratingSet$-tree series is the product
\begin{equation}
    \Composition :
    \K \AAngle{\SyntaxTrees(\GeneratingSet)}
    \otimes
    \K \AAngle{\SyntaxTrees(\GeneratingSet)}^{\otimes n}
    \to \K \AAngle{\SyntaxTrees(\GeneratingSet)}
\end{equation}
defined for any $\GeneratingSet$-tree series $\SeriesF$ and
$\SeriesG_1$, \dots, $\SeriesG_n$ by
\begin{equation} \label{equ:composition_product_tree_series}
    \SeriesF \Composition \Han{\SeriesG_1, \dots, \SeriesG_n} :=
    \sum_{\substack{
        \TreeT \in \SyntaxTrees(\GeneratingSet)(n) \\
        \TreeS_1, \dots, \TreeS_n \in \SyntaxTrees(\GeneratingSet)
    }}
    \Par{\Angle{\TreeT, \SeriesF}
    \prod_{i \in [n]}
    \Angle{\TreeS_i, \SeriesG_i}}
    \TreeT \circ \Han{\TreeS_1, \dots, \TreeS_n}.
\end{equation}
Observe that this product is linear in all its inputs, and that it can
be seen as an extension by linearity of the full composition product of
$\GeneratingSet$-trees.
\medbreak

%%%%%%%%%%%%%%%%%%%%%%%%%%%%%%%%%%%%%%%%%%%%%%%%%%%%%%%%%%%%%%%%%%%%%%%%
\subsubsection{Generating series}
Let us define from $\GeneratingSet$ the set
\begin{equation}
    \AlphabetQ_\GeneratingSet :=
    \Bra{q_\GenA : \GenA \in \GeneratingSet}
\end{equation}
of formal parameters. The usual set of the commutative generating series
on the set $\{t, q\} \cup \AlphabetQ_\GeneratingSet$ of parameters is
denoted by $\K \AAngle{t, q, \AlphabetQ_\GeneratingSet}$.
\medbreak

The \Def{trace} $\Trace(\TreeT)$ of a $\GeneratingSet$-tree $\TreeT$ is
the monomial of $\K \AAngle{t, q, \AlphabetQ_\GeneratingSet}$ defined by
\begin{equation}
    \Trace(\TreeT)
    := \prod_{\GenA \in \GeneratingSet} q_\GenA^{\Deg_\GenA(\TreeT)},
\end{equation}
where for any $\GenA \in \GeneratingSet$, $\Deg_\GenA(\TreeT)$ is the
number of internal nodes of $\TreeT$ labeled by $\GenA$. Moreover, the
\Def{enumeration map} on $\K \AAngle{\SyntaxTrees(\GeneratingSet)}$ is
the map
\begin{equation}
    \Enum :
    \K \AAngle{\SyntaxTrees(\GeneratingSet)}
    \to \K \AAngle{t, q, \AlphabetQ}
\end{equation}
defined linearly by
\begin{equation}
    \Enum(\TreeT) := t^{|\TreeT|} q^{\Deg(\TreeT)} \; \Trace(\TreeT).
\end{equation}
For any $\GeneratingSet$-tree series $\SeriesF$, the
\Def{enumerative image} of $\SeriesF$ is the generating series
$\Enum(\SeriesF)$. By definition, the coefficient of
\begin{math}
    t^n q^d q_{\GenA_1}^{\alpha_1} \dots q_{\GenA_\ell}^{\alpha_\ell},
\end{math}
$n \geq 1$, $d \geq 0$, $\alpha_i \geq 0$, $i \in [\ell]$, in the
enumerative image of the characteristic series of a set $S$ of
$\GeneratingSet$-trees is the number of trees $\TreeT$ of $S$ having
$n$ as arity, $d$ as degree, and
\begin{math}
    q_{\GenA_1}^{\alpha_1} \dots q_{\GenA_\ell}^{\alpha_\ell}
\end{math}
as trace.
\medbreak

Observe that for any alphabet $\GeneratingSet$, since there are finitely
many $\GeneratingSet$-trees having a fixed trace, the enumerative image
of any $\GeneratingSet$-tree series is always well-defined. Moreover,
when $\GeneratingSet$ is combinatorial and
$\GeneratingSet(1) = \emptyset$, there are finitely many
$\GeneratingSet$-trees having a given arity $n \geq 1$. For this reason,
for any set $S$ of $\GeneratingSet$-trees, the specialization
\begin{math}
    \CharacteristicSeries{S}
    _{|q := 1, q_\GenA := 1, \GenA \in \GeneratingSet}
\end{math}
is well-defined and is the series wherein the coefficient of $t^n$ is
the number of $\GeneratingSet$-trees of $S$ of arity $n$. Observe
finally that when $\GeneratingSet$ is finite, there are finitely many
$\GeneratingSet$-trees having a given degree $d \geq 0$. For this
reason, the specialization
\begin{math}
    \CharacteristicSeries{S}
    _{|t := 1, q_\GenA := 1, \GenA \in \GeneratingSet}
\end{math}
is well-defined and is the series wherein the coefficient of $q^d$ is
the number of $\GeneratingSet$-trees of $S$ of degree~$d$.
\medbreak

\begin{Proposition} \label{prop:enumeration_composition}
    For any alphabet $\GeneratingSet$, any $\GeneratingSet$-tree
    $\TreeT$ of arity $n \geq 1$, and any $\GeneratingSet$-tree series
    $\SeriesF_1$, \dots, $\SeriesF_n$,
    \begin{equation}
        \Enum\Par{\TreeT \Composition
        \Han{\SeriesF_1, \dots, \SeriesF_n}}
        =
        \frac{1}{t^{|\TreeT|}} \Enum(\TreeT) \;
        \prod_{i \in [n]} \Enum\Par{\SeriesF_i}.
    \end{equation}
\end{Proposition}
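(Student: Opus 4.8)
The plan is to unfold both sides of the claimed identity using the definitions of the composition product of tree series \eqref{equ:composition_product_tree_series} and of the enumeration map, and to check that the monomials attached to each tree on the two sides coincide. The crucial combinatorial fact, on which everything rests, is the following behavior of the statistics under the full composition of trees: if $\TreeT$ has arity $n$ and $\TreeS_1, \dots, \TreeS_n$ are $\GeneratingSet$-trees, then writing $\TreeR := \TreeT \circ \Han{\TreeS_1, \dots, \TreeS_n}$, one has
\begin{equation}
    |\TreeR| = \sum_{i \in [n]} |\TreeS_i|,
    \qquad
    \Deg(\TreeR) = \Deg(\TreeT) + \sum_{i \in [n]} \Deg(\TreeS_i),
    \qquad
    \Trace(\TreeR) = \Trace(\TreeT) \prod_{i \in [n]} \Trace(\TreeS_i).
\end{equation}
The first equality holds because grafting $\TreeS_i$ onto the $i$th leaf of $\TreeT$ consumes that leaf and replaces it by the $|\TreeS_i|$ leaves of $\TreeS_i$; so the $n$ leaves of $\TreeT$ disappear and are replaced by $\sum_{i} |\TreeS_i|$ leaves. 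The second and third hold because the internal nodes of $\TreeR$ are exactly the internal nodes of $\TreeT$ together with, for each $i$, the internal nodes of $\TreeS_i$, with their labels unchanged; so both the total count and the per-letter counts add up. These are precisely the identities needed to see that $\Enum$ is, up to the $t$-degree correction, multiplicative with respect to full composition.

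Granting this, I would argue as follows. By linearity of $\Enum$ and of $\Composition$ in each argument, and by \eqref{equ:composition_product_tree_series} applied with $\SeriesF := \TreeT$ (so that $\Angle{\TreeT', \TreeT} = \delta_{\TreeT', \TreeT}$), we have
\begin{equation}
    \Enum\Par{\TreeT \Composition \Han{\SeriesF_1, \dots, \SeriesF_n}}
    =
    \sum_{\TreeS_1, \dots, \TreeS_n \in \SyntaxTrees(\GeneratingSet)}
    \Par{\prod_{i \in [n]} \Angle{\TreeS_i, \SeriesF_i}}
    \Enum\Par{\TreeT \circ \Han{\TreeS_1, \dots, \TreeS_n}}.
\end{equation}
Now substitute $\Enum\Par{\TreeT \circ \Han{\TreeS_1, \dots, \TreeS_n}} = t^{|\TreeR|} q^{\Deg(\TreeR)} \Trace(\TreeR)$ and replace $|\TreeR|$, $\Deg(\TreeR)$, $\Trace(\TreeR)$ using the three identities above. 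The factor coming from $\TreeT$ is $q^{\Deg(\TreeT)} \Trace(\TreeT)$, which equals $\tfrac{1}{t^{|\TreeT|}} \Enum(\TreeT)$ since $\Enum(\TreeT) = t^{|\TreeT|} q^{\Deg(\TreeT)} \Trace(\TreeT)$; note this factor does not depend on the $\TreeS_i$, so it may be pulled out of the sum. The remaining summand factorizes as a product over $i \in [n]$ of $\Angle{\TreeS_i, \SeriesF_i} \, t^{|\TreeS_i|} q^{\Deg(\TreeS_i)} \Trace(\TreeS_i) = \Angle{\TreeS_i, \SeriesF_i} \Enum(\TreeS_i)$, so the multi-sum over $(\TreeS_1, \dots, \TreeS_n)$ splits into a product of $n$ independent sums, the $i$th of which is exactly $\Enum(\SeriesF_i) = \sum_{\TreeS_i} \Angle{\TreeS_i, \SeriesF_i} \Enum(\TreeS_i)$. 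This yields the right-hand side of the statement.

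The only genuinely substantive point is the verification of the three statistic identities under full composition; everything else is bookkeeping with linearity and with the (everywhere-finite) sums defining $\Enum$ and $\Composition$. I do not expect a real obstacle there: it is an immediate consequence of the structure of $\TreeT \circ \Han{\TreeS_1, \dots, \TreeS_n}$ as a tree whose leaf set is the disjoint union of the leaf sets of the $\TreeS_i$ and whose internal-node set, with labels, is the disjoint union of those of $\TreeT$ and of the $\TreeS_i$. One should also remark in passing that all the series manipulated here have well-defined enumerative images (as recalled before the statement, there are finitely many $\GeneratingSet$-trees with a fixed trace), so the rearrangements of sums are legitimate.
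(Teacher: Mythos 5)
Your proof is correct and follows exactly the route the paper takes: the paper's proof is the one-line remark that the statement follows by computing the enumerative image of the right member of~\eqref{equ:composition_product_tree_series}, and your argument is precisely that computation carried out in detail (additivity of arity, degree, and trace under full composition, then factorization of the multi-sum).
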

\begin{proof}
    The statement of the proposition follows by computing the
    enumerative image of the right member
    of~\eqref{equ:composition_product_tree_series}.
\end{proof}
\medbreak

Proposition~\ref{prop:enumeration_composition} admits the following
practical consequence. Assume that we have a set $S$ of
$\GeneratingSet$-trees we want enumerate (with respect to the arities
and the traces of its elements). A way to accomplish this consists in
providing an expression for $\Enum\Par{\CharacteristicSeries{S}}$. In
the case where we have a description of $\CharacteristicSeries{S}$ as an
expression using the sum, the multiplication by a scalar, and the
composition product of $\GeneratingSet$-tree series, we obtain thanks to
Proposition~\ref{prop:enumeration_composition} an expression for
$\Enum\Par{\CharacteristicSeries{S}}$ using only the sum, the
multiplication by a scalar, and the multiplication product of generating
series. We shall use this observation in the sequel to obtain systems of
equations of generating series from systems of equations of tree series.
\medbreak

%%%%%%%%%%%%%%%%%%%%%%%%%%%%%%%%%%%%%%%%%%%%%%%%%%%%%%%%%%%%%%%%%%%%%%%%
%%%%%%%%%%%%%%%%%%%%%%%%%%%%%%%%%%%%%%%%%%%%%%%%%%%%%%%%%%%%%%%%%%%%%%%%
%%%%%%%%%%%%%%%%%%%%%%%%%%%%%%%%%%%%%%%%%%%%%%%%%%%%%%%%%%%%%%%%%%%%%%%%
\section{Tree series and pattern avoidance}
\label{sec:pattern_avoidance}
This section deals with two notions of pattern avoidance in syntax trees:
factor-avoidance and prefix-avoidance. The aim is to describe a way to
enumerate the syntax trees factor-avoiding a set of patterns. For this,
we begin by introducing some technical tools. Then, we state our main
result, provide some of its consequences, and finish by reviewing some
examples.
\medbreak

%%%%%%%%%%%%%%%%%%%%%%%%%%%%%%%%%%%%%%%%%%%%%%%%%%%%%%%%%%%%%%%%%%%%%%%%
%%%%%%%%%%%%%%%%%%%%%%%%%%%%%%%%%%%%%%%%%%%%%%%%%%%%%%%%%%%%%%%%%%%%%%%%
\subsection{Patterns in syntax trees}
The notions of prefix, factor, and suffix in syntax trees are set here.
Their immediate properties are stated.
\medbreak

%%%%%%%%%%%%%%%%%%%%%%%%%%%%%%%%%%%%%%%%%%%%%%%%%%%%%%%%%%%%%%%%%%%%%%%%
\subsubsection{Factors, prefixes, and suffixes in trees}
Let $\GeneratingSet$ be an alphabet and let $\TreeT$ be a
$\GeneratingSet$-tree. When $\TreeT$ expresses as
\begin{equation} \label{equ:decomposition_tree}
    \TreeT =
    \TreeR \circ_i
    \Par{\TreeS \circ \Han{\TreeR_1, \dots, \TreeR_{|\TreeS|}}}
\end{equation}
for some $\GeneratingSet$-trees $\TreeS$, $\TreeR$, and $\TreeR_1$,
\dots, $\TreeR_{|\TreeS|}$, and $i \in [|\TreeR|]$, $\TreeS$ is a
\Def{factor} of $\TreeT$ and this property is denoted by
$\TreeS \Factor \TreeT$. Intuitively, this says that one can put down
$\TreeS$ at a certain place into $\TreeT$, by possibly superimposing
leaves of $\TreeS$ and internal nodes of $\TreeT$. When
$\TreeR = \Leaf$ in~\eqref{equ:decomposition_tree}, $\TreeS$ is a
\Def{prefix} of $\TreeT$ and this property is denoted
by~$\TreeS \Prefix \TreeT$. Intuitively, this says that $\TreeS$ is a
factor of $\TreeT$ wherein the root of $\TreeS$ can be put down onto the
root of $\TreeT$. Finally, when $\TreeR_j = \Leaf$ for all
$j \in [|\TreeS|]$ in~\eqref{equ:decomposition_tree}, $\TreeS$ is a
\Def{suffix} of $\TreeT$ and this property is denoted by
$\TreeS \Suffix \TreeT$. Let us consider some examples. By setting
\begin{equation}
    \TreeT :=
    \begin{tikzpicture}[Centering,xscale=.2,yscale=.14]
        \node(0)at(0.00,-10.50){};
        \node(10)at(10.00,-10.50){};
        \node(12)at(11.00,-10.50){};
        \node(13)at(12.00,-10.50){};
        \node(2)at(2.00,-10.50){};
        \node(4)at(4.00,-10.50){};
        \node(6)at(6.00,-10.50){};
        \node(8)at(8.00,-7.00){};
        \node[NodeST](1)at(1.00,-7.00){$\GenA$};
        \node[NodeST](11)at(11.00,-7.00){$\GenC$};
        \node[NodeST](3)at(3.00,-3.50){$\GenA$};
        \node[NodeST](5)at(5.00,-7.00){$\GenB$};
        \node[NodeST](7)at(7.00,0.00){$\GenB$};
        \node[NodeST](9)at(9.00,-3.50){$\GenA$};
        \draw[Edge](0)--(1);
        \draw[Edge](1)--(3);
        \draw[Edge](10)--(11);
        \draw[Edge](11)--(9);
        \draw[Edge](12)--(11);
        \draw[Edge](13)--(11);
        \draw[Edge](2)--(1);
        \draw[Edge](3)--(7);
        \draw[Edge](4)--(5);
        \draw[Edge](5)--(3);
        \draw[Edge](6)--(5);
        \draw[Edge](8)--(9);
        \draw[Edge](9)--(7);
        \node(r)at(7.00,3){};
        \draw[Edge](r)--(7);
    \end{tikzpicture},
\end{equation}
we have
\begin{equation}
    \begin{tikzpicture}[Centering,xscale=0.19,yscale=0.2]
        \node(0)at(0.00,-4.67){};
        \node(2)at(2.00,-4.67){};
        \node(4)at(4.00,-4.67){};
        \node(6)at(6.00,-4.67){};
        \node[NodeST](1)at(1.00,-2.33){$\GenA$};
        \node[NodeST](3)at(3.00,0.00){$\GenB$};
        \node[NodeST](5)at(5.00,-2.33){$\GenA$};
        \draw[Edge](0)--(1);
        \draw[Edge](1)--(3);
        \draw[Edge](2)--(1);
        \draw[Edge](4)--(5);
        \draw[Edge](5)--(3);
        \draw[Edge](6)--(5);
        \node(r)at(3.00,2){};
        \draw[Edge](r)--(3);
    \end{tikzpicture}
    \Factor
    \TreeT,
    \qquad
    \begin{tikzpicture}[Centering,xscale=0.22,yscale=0.25]
        \node(0)at(0.00,-1.67){};
        \node(2)at(2.00,-3.33){};
        \node(4)at(4.00,-3.33){};
        \node[NodeST](1)at(1.00,0.00){$\GenB$};
        \node[NodeST](3)at(3.00,-1.67){$\GenA$};
        \draw[Edge](0)--(1);
        \draw[Edge](2)--(3);
        \draw[Edge](3)--(1);
        \draw[Edge](4)--(3);
        \node(r)at(1.00,1.75){};
        \draw[Edge](r)--(1);
    \end{tikzpicture}
    \Prefix
    \TreeT,
    \qquad
    \begin{tikzpicture}[Centering,xscale=0.2,yscale=0.26]
        \node(0)at(0.00,-3.50){};
        \node(2)at(2.00,-5.25){};
        \node(4)at(4.00,-5.25){};
        \node(6)at(6.00,-1.75){};
        \node[NodeST](1)at(1.00,-1.75){$\GenA$};
        \node[NodeST](3)at(3.00,-3.50){$\GenB$};
        \node[NodeST](5)at(5.00,0.00){$\GenB$};
        \draw[Edge](0)--(1);
        \draw[Edge](1)--(5);
        \draw[Edge](2)--(3);
        \draw[Edge](3)--(1);
        \draw[Edge](4)--(3);
        \draw[Edge](6)--(5);
        \node(r)at(5.00,1.75){};
        \draw[Edge](r)--(5);
    \end{tikzpicture}
    \Prefix
    \TreeT,
    \qquad
    \begin{tikzpicture}[Centering,xscale=0.19,yscale=0.2]
        \node(0)at(0.00,-4.67){};
        \node(2)at(2.00,-4.67){};
        \node(4)at(4.00,-4.67){};
        \node(6)at(6.00,-4.67){};
        \node[NodeST](1)at(1.00,-2.33){$\GenA$};
        \node[NodeST](3)at(3.00,0.00){$\GenA$};
        \node[NodeST](5)at(5.00,-2.33){$\GenB$};
        \draw[Edge](0)--(1);
        \draw[Edge](1)--(3);
        \draw[Edge](2)--(1);
        \draw[Edge](4)--(5);
        \draw[Edge](5)--(3);
        \draw[Edge](6)--(5);
        \node(r)at(3.00,2){};
        \draw[Edge](r)--(3);
    \end{tikzpicture}
    \Suffix
    \TreeT.
\end{equation}
\medbreak

\begin{Proposition} \label{prop:posets_factors_prefixes}
    For any alphabet $\GeneratingSet$, $\Factor$, $\Prefix$, and
    $\Suffix$ endow $\SyntaxTrees(\GeneratingSet)$ with poset
    structures. Moreover, the poset
    $\Par{\SyntaxTrees(\GeneratingSet), \Factor}$ is an extension of
    $\Par{\SyntaxTrees(\GeneratingSet), \Prefix}$.
\end{Proposition}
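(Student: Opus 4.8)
The plan is to verify each of the three claimed poset structures, then establish the extension statement. For each of $\Factor$, $\Prefix$, and $\Suffix$ I need reflexivity, antisymmetry, and transitivity. Reflexivity is immediate: any $\GeneratingSet$-tree $\TreeT$ satisfies $\TreeT \Factor \TreeT$ by taking $\TreeR = \Leaf$ and all $\TreeR_j = \Leaf$ in~\eqref{equ:decomposition_tree}, since composing with leaves changes nothing; this simultaneously shows $\TreeT \Prefix \TreeT$ and $\TreeT \Suffix \TreeT$. For transitivity of $\Factor$, suppose $\TreeS \Factor \TreeT$ and $\TreeT \Factor \TreeT'$; then I would substitute the decomposition of $\TreeT$ into that of $\TreeT'$ and use the associativity-type axioms of the partial and full composition operations on syntax trees to re-bracket the result into the form~\eqref{equ:decomposition_tree} exhibiting $\TreeS$ as a factor of $\TreeT'$. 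The cases of $\Prefix$ (where the outer $\TreeR$ is a leaf) and $\Suffix$ (where the inner $\TreeR_j$ are all leaves) are obtained by checking that these conditions are preserved under the substitution; for instance, a prefix of a prefix is again obtained with $\TreeR = \Leaf$ because grafting on the root of the root stays at the root.

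For antisymmetry, the key observation is a degree (or arity) monotonicity: if $\TreeS \Factor \TreeT$ then $\Deg(\TreeS) \le \Deg(\TreeT)$, since all internal nodes of $\TreeS$ appear among the internal nodes of $\TreeT$ in the decomposition~\eqref{equ:decomposition_tree} (grafting trees onto leaves only adds internal nodes). Hence if $\TreeS \Factor \TreeT$ and $\TreeT \Factor \TreeS$, the degrees coincide, which forces $\TreeR$ and all the $\TreeR_j$ in~\eqref{equ:decomposition_tree} to have degree $0$, hence to equal $\Leaf$, so $\TreeT = \TreeS$. The same argument, specialized, gives antisymmetry for $\Prefix$ and $\Suffix$.

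Finally, the extension statement says that whenever $\TreeS \Prefix \TreeT$ we also have $\TreeS \Factor \TreeT$, and that $\Factor$ and $\Prefix$ have the same underlying set $\SyntaxTrees(\GeneratingSet)$; this is immediate from the definitions, since a prefix is by definition a factor with the extra condition $\TreeR = \Leaf$, so the order relation $\Prefix$ is contained in $\Factor$ on the same ground set. The main obstacle, modest as it is, will be setting up the substitution in the transitivity argument cleanly: one must carefully track which leaf indices of the outer tree receive which subtrees when composing the two instances of~\eqref{equ:decomposition_tree}, and invoke the correct associativity relations among $\circ_i$ and $\circ\Han{\cdots}$ so that the composed expression genuinely has the shape $\TreeR' \circ_k \Par{\TreeS \circ \Han{\dots}}$. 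Once this bookkeeping is in place, the remaining verifications are routine.
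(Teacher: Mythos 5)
Your proposal is correct and follows the same route as the paper, which simply asserts that the order axioms are ``straightforward from the definitions'' and notes, exactly as you do, that $\TreeS \Prefix \TreeT$ implies $\TreeS \Factor \TreeT$ for the extension claim. Your added details --- reflexivity via $\TreeR = \Leaf$ and $\TreeR_j = \Leaf$, antisymmetry via additivity of the degree in the decomposition~\eqref{equ:decomposition_tree}, and transitivity by substituting one decomposition into the other --- are a sound elaboration of what the paper leaves implicit.
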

\begin{proof}
    The fact that $\Factor$, $\Prefix$, and $\Suffix$ are order
    relations is straightforward from their definitions. Moreover, since
    for any $\GeneratingSet$-trees $\TreeS$ and $\TreeT$,
    $\TreeS \Prefix \TreeT$ implies $\TreeS \Factor \TreeT$, the second
    part of the statement of the proposition holds.
\end{proof}
\medbreak

When $\TreeS$ is not a factor (resp. a prefix) of $\TreeT$, $\TreeT$
\Def{factor-avoids} (resp. \Def{prefix-avoids}) $\TreeS$. This property
is denoted by $\TreeS \NotFactor \TreeT$ (resp.
$\TreeS \NotPrefix \TreeT$). By extension, when $\SetP$ is any subset of
$\SyntaxTrees(\GeneratingSet)$, $\TreeT$ \Def{factor-avoids} (resp.
\Def{prefix-avoids}) $\SetP$ if for all $\TreeS \in \SetP$,
$\TreeS \NotFactor \TreeT$ (resp. $\TreeS \NotPrefix \TreeT$). By a
slight abuse of notation, this property is denoted by
$\SetP \NotFactor \TreeT$ (resp. $\SetP \NotPrefix \TreeT$).
\medbreak

\begin{Lemma} \label{lem:prefix_recursive}
    Let $\GeneratingSet$ be an alphabet, and $\TreeS$ and $\TreeT$ be
    two $\GeneratingSet$-trees. Then, $\TreeS$ is a prefix of $\TreeT$
    if and only if $\TreeS = \Leaf$ or there exists a letter
    $\GenA \in \GeneratingSet(k)$ such that
    $\TreeS = \GenA \circ \Han{\TreeS(1), \dots, \TreeS(k)}$,
    $\TreeT = \GenA \circ \Han{\TreeT(1), \dots, \TreeT(k)}$, and for
    all $i \in [k]$, $\TreeS(i) \Prefix \TreeT(i)$.
\end{Lemma}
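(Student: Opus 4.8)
The plan is to prove the equivalence by unfolding the definition of prefix from~\eqref{equ:decomposition_tree} with $\TreeR = \Leaf$, and arguing in two directions.

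First I would handle the easy direction: if $\TreeS = \Leaf$, then taking $\TreeS = \Leaf$, $\TreeR = \Leaf$, and $i = 1$ in~\eqref{equ:decomposition_tree} gives $\TreeT = \Leaf \circ_1 (\Leaf \circ [\,]) = \TreeT$ trivially (since $\Leaf$ is a prefix of every tree), so $\TreeS \Prefix \TreeT$; and if $\TreeS = \GenA \circ \Han{\TreeS(1), \dots, \TreeS(k)}$ with $\TreeS(i) \Prefix \TreeT(i)$ for all $i \in [k]$, then writing each $\TreeS(i)$ as a prefix of $\TreeT(i)$ via~\eqref{equ:decomposition_tree} with leaf context, and grafting these decompositions back under the common root $\GenA$, one assembles a decomposition of $\TreeT$ showing $\TreeS \Prefix \TreeT$. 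The one point requiring a little care here is checking that the root of $\TreeT$ must indeed be labeled by $\GenA$ with the same arity $k$: this is forced because a prefix decomposition with $\TreeR = \Leaf$ places the root of $\TreeS$ onto the root of $\TreeT$.

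For the converse direction, I would suppose $\TreeS \Prefix \TreeT$ and do a case split on whether $\TreeS = \Leaf$. If $\TreeS \ne \Leaf$, then $\TreeS$ has a root labeled by some $\GenA \in \GeneratingSet(k)$, so $\TreeS = \GenA \circ \Han{\TreeS(1), \dots, \TreeS(k)}$ by definition of subtrees. Unfolding $\TreeS \Prefix \TreeT$ via~\eqref{equ:decomposition_tree} with $\TreeR = \Leaf$ gives $\TreeT = \TreeS \circ \Han{\TreeR_1, \dots, \TreeR_{|\TreeS|}}$; since grafting does not change the root, the root of $\TreeT$ is also labeled by $\GenA$ with arity $k$, so $\TreeT = \GenA \circ \Han{\TreeT(1), \dots, \TreeT(k)}$. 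It then remains to observe that the full composition $\TreeT = \TreeS \circ \Han{\TreeR_1, \dots, \TreeR_{|\TreeS|}}$ distributes over the children: partitioning the leaves $1, \dots, |\TreeS|$ of $\TreeS$ according to which subtree $\TreeS(i)$ they belong to, one gets $\TreeT(i) = \TreeS(i) \circ \Han{\dots}$ for the corresponding block of $\TreeR_j$'s, which is exactly a witness for $\TreeS(i) \Prefix \TreeT(i)$.

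I expect the main obstacle to be purely bookkeeping: formalizing the statement that full composition is compatible with the root-decomposition of a tree, i.e. that $\Par{\GenA \circ \Han{\TreeS(1), \dots, \TreeS(k)}} \circ \Han{\TreeR_1, \dots, \TreeR_{|\TreeS|}} = \GenA \circ \Han{\TreeS(1) \circ \Han{\dots}, \dots, \TreeS(k) \circ \Han{\dots}}$, where the right-hand side bundles the $\TreeR_j$'s into consecutive blocks matching the leaf intervals of the subtrees $\TreeS(i)$. This is intuitively obvious from the grafting picture but is the only place where one must be careful with leaf indexing; once it is in hand, both directions of the equivalence follow immediately, and the induction is implicit (the statement only refers to the immediate children, so no explicit induction is even needed).
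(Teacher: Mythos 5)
Your argument is correct and is exactly the unfolding that the paper has in mind: its entire proof of this lemma is the single sentence ``This follows directly from the definition of the relation $\Prefix$.'' The bookkeeping you identify (compatibility of the full composition with the root decomposition, and the blockwise distribution of the $\TreeR_j$'s over the subtrees $\TreeS(i)$) is precisely what the paper leaves implicit, so your write-up is a faithful, more detailed version of the same proof.
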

\begin{proof}
    This follows directly from the definition of the relation $\Prefix$.
\end{proof}
\medbreak

%%%%%%%%%%%%%%%%%%%%%%%%%%%%%%%%%%%%%%%%%%%%%%%%%%%%%%%%%%%%%%%%%%%%%%%%
\subsubsection{Tree series avoiding factors}
For any subset $\SetP$ of
\begin{math}
    \SyntaxTrees(\GeneratingSet) \setminus \Bra{\Leaf},
\end{math}
let $\Predicate_\SetP$ be the predicate on
$\SyntaxTrees(\GeneratingSet)$ wherein $\Predicate_\SetP(\TreeT)$ holds
if and only $\SetP \NotFactor \TreeT$. Let also $\FactorSeries(\SetP)$
be the $\GeneratingSet$-tree series defined by
\begin{equation} \label{equ:factor_avoiding_series}
    \FactorSeries(\SetP) := \PredicateSeries{\Predicate_\SetP}.
\end{equation}
In other terms, $\FactorSeries(\SetP)$ is the characteristic series of
all $\GeneratingSet$-trees factor-avoiding all trees of~$\SetP$. In this
context, we say that the elements of $\SetP$ are \Def{patterns}. Notice
that we consider only sets of patterns $\SetP$ such that
$\Leaf \notin \SetP$ since there exists no $\GeneratingSet$-tree
factor-avoiding~$\Leaf$. Notice also that, for the while, there is no
restriction on $\GeneratingSet$ or $\SetP$. This set $\SetP$
of patterns can be infinite, and some trees can be themselves factors of
another one. The aim of the next section is to provide a system of
equations to describe~$\FactorSeries(\SetP)$ within the more general
possible context.
\medbreak

%%%%%%%%%%%%%%%%%%%%%%%%%%%%%%%%%%%%%%%%%%%%%%%%%%%%%%%%%%%%%%%%%%%%%%%%
%%%%%%%%%%%%%%%%%%%%%%%%%%%%%%%%%%%%%%%%%%%%%%%%%%%%%%%%%%%%%%%%%%%%%%%%
\subsection{Pattern avoidance and enumeration}
We provide here a way to obtain a system of equations to describe the
$\GeneratingSet$-tree series $\FactorSeries(\SetP)$. For this, we start
by introducing tools, namely consistent words and admissible trees.
From now, to not overload the notation, sets of patterns are denoted by
omitting the braces and the commas. Hence, sets of patterns can be seen
as unordered forests of $\GeneratingSet$-trees without repeated trees.
\medbreak

Moreover, all examples of this section are based upon the finite set of
patterns
\begin{equation} \label{equ:example_pattern_set}
    \SetP :=
    \begin{tikzpicture}[Centering,xscale=0.2,yscale=0.21]
        \node(0)at(0.00,-4.00){};
        \node(2)at(1.00,-4.00){};
        \node(3)at(2.00,-4.00){};
        \node(5)at(4.00,-2.00){};
        \node[NodeST](1)at(1.00,-2.00){$\GenC$};
        \node[NodeST](4)at(3.00,0.00){$\GenA$};
        \draw[Edge](0)--(1);
        \draw[Edge](1)--(4);
        \draw[Edge](2)--(1);
        \draw[Edge](3)--(1);
        \draw[Edge](5)--(4);
        \node(r)at(3.00,2){};
        \draw[Edge](r)--(4);
    \end{tikzpicture}
    \begin{tikzpicture}[Centering,xscale=.23,yscale=.2]
        \node(0)at(0.00,-4.00){};
        \node(2)at(2.00,-4.00){};
        \node(4)at(3.00,-2.00){};
        \node(5)at(4.00,-2.00){};
        \node[NodeST](1)at(1.00,-2.00){$\GenA$};
        \node[NodeST](3)at(3.00,0.00){$\GenC$};
        \draw[Edge](0)--(1);
        \draw[Edge](1)--(3);
        \draw[Edge](2)--(1);
        \draw[Edge](4)--(3);
        \draw[Edge](5)--(3);
        \node(r)at(3.00,2){};
        \draw[Edge](r)--(3);
    \end{tikzpicture}
    \begin{tikzpicture}[Centering,xscale=0.2,yscale=0.18]
        \node(0)at(0.00,-5.33){};
        \node(2)at(2.00,-5.33){};
        \node(4)at(3.00,-5.33){};
        \node(6)at(5.00,-5.33){};
        \node(7)at(6.00,-2.67){};
        \node[NodeST](1)at(1.00,-2.67){$\GenB$};
        \node[NodeST](3)at(4.00,0.00){$\GenC$};
        \node[NodeST](5)at(4.00,-2.67){$\GenB$};
        \draw[Edge](0)--(1);
        \draw[Edge](1)--(3);
        \draw[Edge](2)--(1);
        \draw[Edge](4)--(5);
        \draw[Edge](5)--(3);
        \draw[Edge](6)--(5);
        \draw[Edge](7)--(3);
        \node(r)at(4.00,2.00){};
        \draw[Edge](r)--(3);
    \end{tikzpicture}
    \begin{tikzpicture}[Centering,xscale=.2,yscale=.18]
        \node(0)at(0.00,-5.33){};
        \node(2)at(2.00,-5.33){};
        \node(4)at(3.00,-2.67){};
        \node(5)at(4.00,-5.33){};
        \node(7)at(6.00,-5.33){};
        \node[NodeST](1)at(1.00,-2.67){$\GenB$};
        \node[NodeST](3)at(3.00,0.00){$\GenC$};
        \node[NodeST](6)at(5.00,-2.67){$\GenA$};
        \draw[Edge](0)--(1);
        \draw[Edge](1)--(3);
        \draw[Edge](2)--(1);
        \draw[Edge](4)--(3);
        \draw[Edge](5)--(6);
        \draw[Edge](6)--(3);
        \draw[Edge](7)--(6);
        \node(r)at(3.00,2.00){};
        \draw[Edge](r)--(3);
    \end{tikzpicture}
    \begin{tikzpicture}[Centering,xscale=.175,yscale=.15]
        \node(0)at(0.00,-6.00){};
        \node(11)at(8.00,-6.00){};
        \node(2)at(1.00,-6.00){};
        \node(3)at(2.00,-9.00){};
        \node(5)at(3.00,-9.00){};
        \node(6)at(4.00,-9.00){};
        \node(8)at(5.00,-3.00){};
        \node(9)at(6.00,-6.00){};
        \node[NodeST](1)at(1.00,-3.00){$\GenC$};
        \node[NodeST](10)at(7.00,-3.00){$\GenA$};
        \node[NodeST](4)at(3.00,-6.00){$\GenC$};
        \node[NodeST](7)at(5.00,0.00){$\GenC$};
        \draw[Edge](0)--(1);
        \draw[Edge](1)--(7);
        \draw[Edge](10)--(7);
        \draw[Edge](11)--(10);
        \draw[Edge](2)--(1);
        \draw[Edge](3)--(4);
        \draw[Edge](4)--(1);
        \draw[Edge](5)--(4);
        \draw[Edge](6)--(4);
        \draw[Edge](8)--(7);
        \draw[Edge](9)--(10);
        \node(r)at(5.00,2.5){};
        \draw[Edge](r)--(7);
    \end{tikzpicture}.
\end{equation}
\medbreak

%%%%%%%%%%%%%%%%%%%%%%%%%%%%%%%%%%%%%%%%%%%%%%%%%%%%%%%%%%%%%%%%%%%%%%%%
\subsubsection{Consistent words}
Let $\GeneratingSet$ be an alphabet and $\SetP$ be a subset of
\begin{math}
    \SyntaxTrees(\GeneratingSet) \setminus \Bra{\Leaf}.
\end{math}
For any $\GenA \in \GeneratingSet(k)$, $k \geq 1$, let
\begin{equation}
    \SetP_\GenA :=
    \Bra{\TreeS \in \SetP : \Corolla{\GenA} \Prefix \TreeS}.
\end{equation}
In other words, $\SetP_\GenA$ is the subset of $\SetP$ of the patterns
having roots labeled by $\GenA$. A word
$\SetS := \Par{\SetS_1, \dots, \SetS_k}$ where each $\SetS_i$ is a
subset of $\SyntaxTrees(\GeneratingSet)$, $i \in [k]$, is
\Def{$\SetP_\GenA$-consistent} if for any $\TreeS \in \SetP_\GenA$,
there is an $i \in [k]$ such that $\TreeS(i) \ne \Leaf$ and
$\TreeS(i) \in \SetS_i$. Observe that when $\Corolla{\GenA} \in \SetP$,
there is no $\SetP_\GenA$-consistent words. Moreover, a
$\GeneratingSet$-tree $\TreeT$ is \Def{$\SetS$-admissible} if the root
of $\TreeT$ is labeled by $\GenA$ and for all $i \in [k]$, $\TreeT(i)$
prefix-avoid~$\SetS_i$.
\medbreak

For instance, by considering the
set~\eqref{equ:example_pattern_set} of patterns, the word
\begin{equation}
    \SetS :=
    \Par{
    \begin{tikzpicture}[Centering,xscale=0.17,yscale=0.25]
        \node(0)at(0.00,-1.50){};
        \node(2)at(2.00,-1.50){};
        \node[NodeST](1)at(1.00,0.00){$\GenA$};
        \draw[Edge](0)--(1);
        \draw[Edge](2)--(1);
        \node(r)at(1.00,1.5){};
        \draw[Edge](r)--(1);
    \end{tikzpicture}, \enspace
    \begin{tikzpicture}[Centering,xscale=0.17,yscale=0.25]
        \node(0)at(0.00,-1.50){};
        \node(2)at(2.00,-1.50){};
        \node[NodeST](1)at(1.00,0.00){$\GenB$};
        \draw[Edge](0)--(1);
        \draw[Edge](2)--(1);
        \node(r)at(1.00,1.5){};
        \draw[Edge](r)--(1);
    \end{tikzpicture}
    \begin{tikzpicture}[Centering,xscale=0.22,yscale=0.22]
        \node(0)at(0.00,-2.00){};
        \node(2)at(1.00,-4.00){};
        \node(4)at(3.00,-4.00){};
        \node(5)at(4.00,-2.00){};
        \node[NodeST](1)at(2.00,0.00){$\GenC$};
        \node[NodeST](3)at(2.00,-2.00){$\GenA$};
        \draw[Edge](0)--(1);
        \draw[Edge](2)--(3);
        \draw[Edge](3)--(1);
        \draw[Edge](4)--(3);
        \draw[Edge](5)--(1);
        \node(r)at(2.00,1.75){};
        \draw[Edge](r)--(1);
    \end{tikzpicture}, \enspace
    \begin{tikzpicture}[Centering,xscale=0.17,yscale=0.25]
        \node(0)at(0.00,-1.50){};
        \node(2)at(2.00,-1.50){};
        \node[NodeST](1)at(1.00,0.00){$\GenA$};
        \draw[Edge](0)--(1);
        \draw[Edge](2)--(1);
        \node(r)at(1.00,1.5){};
        \draw[Edge](r)--(1);
    \end{tikzpicture}
    \begin{tikzpicture}[Centering,xscale=0.2,yscale=0.24]
        \node(0)at(0.00,-1.67){};
        \node(2)at(2.00,-3.33){};
        \node(4)at(4.00,-3.33){};
        \node[NodeST](1)at(1.00,0.00){$\GenA$};
        \node[NodeST](3)at(3.00,-1.67){$\GenA$};
        \draw[Edge](0)--(1);
        \draw[Edge](2)--(3);
        \draw[Edge](3)--(1);
        \draw[Edge](4)--(3);
        \node(r)at(1.00,1.75){};
        \draw[Edge](r)--(1);
    \end{tikzpicture}}
\end{equation}
is $\SetP_\GenC$-consistent. Moreover, the tree
\begin{equation}
    \TreeT :=
    \begin{tikzpicture}[Centering,xscale=0.2,yscale=0.15]
        \node(0)at(0.00,-3.20){};
        \node(10)at(8.00,-12.80){};
        \node(12)at(10.00,-12.80){};
        \node(14)at(11.00,-9.60){};
        \node(15)at(12.00,-9.60){};
        \node(2)at(1.00,-9.60){};
        \node(4)at(2.00,-9.60){};
        \node(5)at(3.00,-9.60){};
        \node(7)at(5.00,-6.40){};
        \node(8)at(6.00,-6.40){};
        \node[NodeST](1)at(4.00,0.00){$\GenC$};
        \node[NodeST](11)at(9.00,-9.60){$\GenA$};
        \node[NodeST](13)at(11.00,-6.40){$\GenC$};
        \node[NodeST](3)at(2.00,-6.40){$\GenC$};
        \node[NodeST](6)at(4.00,-3.20){$\GenA$};
        \node[NodeST](9)at(7.00,-3.20){$\GenB$};
        \draw[Edge](0)--(1);
        \draw[Edge](10)--(11);
        \draw[Edge](11)--(13);
        \draw[Edge](12)--(11);
        \draw[Edge](13)--(9);
        \draw[Edge](14)--(13);
        \draw[Edge](15)--(13);
        \draw[Edge](2)--(3);
        \draw[Edge](3)--(6);
        \draw[Edge](4)--(3);
        \draw[Edge](5)--(3);
        \draw[Edge](6)--(1);
        \draw[Edge](7)--(6);
        \draw[Edge](8)--(9);
        \draw[Edge](9)--(1);
        \node(r)at(4.00,2.5){};
        \draw[Edge](r)--(1);
    \end{tikzpicture}
\end{equation}
is $\SetS_\GenC$-admissible. Observe however that $\TreeT$ does not
factor-avoids $\SetP$ or $\SetP_\GenC$.
\medbreak

\begin{Lemma} \label{lem:consistent_words}
    Let $\GeneratingSet$ be an alphabet, $\SetP$ be a subset of
    \begin{math}
        \SyntaxTrees(\GeneratingSet) \setminus \Bra{\Leaf},
    \end{math}
    $\GenA \in \GeneratingSet$, and $\SetS$ be a
    $\SetP_\GenA$-consistent word. If $\TreeT$ is an $\SetS$-admissible
    $\GeneratingSet$-tree, then $\TreeT$ prefix-avoids~$\SetP_\GenA$.
\end{Lemma}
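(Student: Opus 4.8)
\emph{Proof proposal.} The plan is to unwind the definitions of $\SetP_\GenA$-consistency and of $\SetS$-admissibility and to lean on the recursive characterization of the prefix order given by Lemma~\ref{lem:prefix_recursive}. Write $\GenA \in \GeneratingSet(k)$ and let $\TreeS$ be an arbitrary element of $\SetP_\GenA$; it suffices to establish $\TreeS \NotPrefix \TreeT$. Since $\SetP$ contains no leaf, $\TreeS \ne \Leaf$, and since $\Corolla{\GenA} \Prefix \TreeS$ the root of $\TreeS$ is labeled by $\GenA$, so that $\TreeS = \GenA \circ \Han{\TreeS(1), \dots, \TreeS(k)}$. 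By the hypothesis that $\TreeT$ is $\SetS$-admissible, the root of $\TreeT$ is also labeled by $\GenA$, hence $\TreeT = \GenA \circ \Han{\TreeT(1), \dots, \TreeT(k)}$, and moreover $\TreeT(j)$ prefix-avoids $\SetS_j$ for every $j \in [k]$.

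Next I would invoke the consistency hypothesis: because $\SetS$ is $\SetP_\GenA$-consistent and $\TreeS \in \SetP_\GenA$, there is an index $i \in [k]$ with $\TreeS(i) \ne \Leaf$ and $\TreeS(i) \in \SetS_i$. Combining this membership with the admissibility of $\TreeT$ at its $i$th subtree, namely that $\TreeT(i)$ prefix-avoids $\SetS_i$, we obtain $\TreeS(i) \NotPrefix \TreeT(i)$.

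Finally I would argue by contradiction. Suppose $\TreeS \Prefix \TreeT$. As $\TreeS \ne \Leaf$, the second alternative of Lemma~\ref{lem:prefix_recursive} must hold, and it yields $\TreeS(j) \Prefix \TreeT(j)$ for all $j \in [k]$; in particular $\TreeS(i) \Prefix \TreeT(i)$, contradicting the conclusion of the previous paragraph. Therefore $\TreeS \NotPrefix \TreeT$, and since $\TreeS$ was an arbitrary element of $\SetP_\GenA$, the tree $\TreeT$ prefix-avoids $\SetP_\GenA$. I expect no genuine obstacle here: the only point requiring a little care is making sure that Lemma~\ref{lem:prefix_recursive} is applicable, i.e. that $\TreeS$ and $\TreeT$ have roots labeled by the same letter $\GenA$, which is exactly what the defining condition $\Corolla{\GenA} \Prefix \TreeS$ of $\SetP_\GenA$ together with the definition of $\SetS$-admissibility provide. (Implicitly one uses that a $\SetP_\GenA$-consistent word exists at all, which already forces $\Corolla{\GenA} \notin \SetP$; otherwise the statement is vacuous.)
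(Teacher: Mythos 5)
Your proposal is correct and follows essentially the same route as the paper's proof: unwind the definitions of consistency and admissibility to get an index $i$ with $\TreeS(i) \ne \Leaf$, $\TreeS(i) \in \SetS_i$, hence $\TreeS(i) \NotPrefix \TreeT(i)$, and then conclude via the recursive characterization of $\Prefix$ from Lemma~\ref{lem:prefix_recursive}. The only cosmetic difference is that you phrase the last step as a contradiction where the paper applies the lemma contrapositively, and you are slightly more explicit about why the lemma applies (both roots labeled by $\GenA$).
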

\begin{proof}
    Let us denote by $k$ the arity of $\GenA$. Since $\TreeT$ is
    $\SetS$-admissible, for all $i \in [k]$ and $\TreeS \in \SetS_i$, we
    have $\TreeS \NotPrefix \TreeT(i)$. Since for any
    $\TreeR \in \SetP_\GenA$, there is a $j \in [k]$ such that
    $\TreeR(j) \ne \Leaf$ and $\TreeR(j) \in \SetS_j$, we have in
    particular that $\TreeR(j) \NotPrefix \TreeT(j)$. Since moreover the
    root of $\TreeT$ is labeled by $\GenA$, by
    Lemma~\ref{lem:prefix_recursive}, one deduces
    that~$\TreeS \NotPrefix \TreeT$.
\end{proof}
\medbreak

If $\Par{\SetS_1, \dots, \SetS_k}$ and
$\Par{\SetS'_1, \dots, \SetS'_k}$ are two words of a same length $k$
where each $\SetS_i$ and $\SetS'_i$ is a subset of
$\SyntaxTrees(\GeneratingSet)$, their \Def{sum} is the word
\begin{equation}
    \Par{\SetS_1, \dots, \SetS_k}
    \WordsSum
    \Par{\SetS'_1, \dots, \SetS'_k}
    :=
    \Par{\SetS_1 \cup \SetS'_1, \dots, \SetS_k \cup \SetS'_k}.
\end{equation}
A $\SetP_\GenA$-consistent word $\Par{\SetS_1, \dots, \SetS_k}$ is
\Def{minimal} if any decomposition
\begin{equation}
    \Par{\SetS_1, \dots, \SetS_k} =
    \Par{\SetS'_1, \dots, \SetS'_k}
    \WordsSum
    \Par{\SetS''_1, \dots, \SetS''_k}
\end{equation}
where $\Par{\SetS'_1, \dots, \SetS'_k}$ is a $\SetP_\GenA$-consistent
word and $\Par{\SetS''_1, \dots, \SetS''_k}$ is a word where each
$\SetS''_i$, $i \in [k]$, is a subset of
$\SyntaxTrees(\GeneratingSet)$, implies
$\Par{\SetS_1, \dots, \SetS_k} = \Par{\SetS'_1, \dots, \SetS'_k}$.
Intuitively, this says that a $\SetP_\GenA$-consistent word is minimal
if the suppression of any tree in one of its letters leads to a word
which is not $\SetP_\GenA$-consistent. We denote by
$\MinimalConsistent\Par{\SetP_\GenA}$ the set of all minimal
$\SetP_\GenA$-consistent words.
\medbreak

For instance, by considering the set~\eqref{equ:example_pattern_set} of
patterns,
\begin{subequations}
\begin{equation}
    \MinimalConsistent\Par{\SetP_\GenA} =
    \Bra{\Par{
    \begin{tikzpicture}[Centering,xscale=0.2,yscale=0.2]
        \node(0)at(0.00,-2.00){};
        \node(2)at(1.00,-2.00){};
        \node(3)at(2.00,-2.00){};
        \node[NodeST](1)at(1.00,0.00){$\GenC$};
        \draw[Edge](0)--(1);
        \draw[Edge](2)--(1);
        \draw[Edge](3)--(1);
        \node(r)at(1.00,1.75){};
        \draw[Edge](r)--(1);
    \end{tikzpicture}, \enspace
    \emptyset}},
\end{equation}
\begin{equation}
    \MinimalConsistent\Par{\SetP_\GenB} =
    \Bra{\Par{\emptyset, \emptyset}},
\end{equation}
\begin{equation}
    \MinimalConsistent\Par{\SetP_\GenC} =
    \Bra{
    \Par{
    \begin{tikzpicture}[Centering,xscale=0.17,yscale=0.25]
        \node(0)at(0.00,-1.50){};
        \node(2)at(2.00,-1.50){};
        \node[NodeST](1)at(1.00,0.00){$\GenA$};
        \draw[Edge](0)--(1);
        \draw[Edge](2)--(1);
        \node(r)at(1.00,1.5){};
        \draw[Edge](r)--(1);
    \end{tikzpicture}, \enspace
    \begin{tikzpicture}[Centering,xscale=0.17,yscale=0.25]
        \node(0)at(0.00,-1.50){};
        \node(2)at(2.00,-1.50){};
        \node[NodeST](1)at(1.00,0.00){$\GenB$};
        \draw[Edge](0)--(1);
        \draw[Edge](2)--(1);
        \node(r)at(1.00,1.5){};
        \draw[Edge](r)--(1);
    \end{tikzpicture}, \enspace
    \begin{tikzpicture}[Centering,xscale=0.17,yscale=0.25]
        \node(0)at(0.00,-1.50){};
        \node(2)at(2.00,-1.50){};
        \node[NodeST](1)at(1.00,0.00){$\GenA$};
        \draw[Edge](0)--(1);
        \draw[Edge](2)--(1);
        \node(r)at(1.00,1.5){};
        \draw[Edge](r)--(1);
    \end{tikzpicture}},
    \Par{
    \begin{tikzpicture}[Centering,xscale=0.17,yscale=0.25]
        \node(0)at(0.00,-1.50){};
        \node(2)at(2.00,-1.50){};
        \node[NodeST](1)at(1.00,0.00){$\GenA$};
        \draw[Edge](0)--(1);
        \draw[Edge](2)--(1);
        \node(r)at(1.00,1.5){};
        \draw[Edge](r)--(1);
    \end{tikzpicture}
    \begin{tikzpicture}[Centering,xscale=0.17,yscale=0.25]
        \node(0)at(0.00,-1.50){};
        \node(2)at(2.00,-1.50){};
        \node[NodeST](1)at(1.00,0.00){$\GenB$};
        \draw[Edge](0)--(1);
        \draw[Edge](2)--(1);
        \node(r)at(1.00,1.5){};
        \draw[Edge](r)--(1);
    \end{tikzpicture}, \enspace
    \emptyset, \enspace
    \begin{tikzpicture}[Centering,xscale=0.17,yscale=0.25]
        \node(0)at(0.00,-1.50){};
        \node(2)at(2.00,-1.50){};
        \node[NodeST](1)at(1.00,0.00){$\GenA$};
        \draw[Edge](0)--(1);
        \draw[Edge](2)--(1);
        \node(r)at(1.00,1.5){};
        \draw[Edge](r)--(1);
    \end{tikzpicture}},
    \Par{
    \begin{tikzpicture}[Centering,xscale=0.17,yscale=0.25]
        \node(0)at(0.00,-1.50){};
        \node(2)at(2.00,-1.50){};
        \node[NodeST](1)at(1.00,0.00){$\GenA$};
        \draw[Edge](0)--(1);
        \draw[Edge](2)--(1);
        \node(r)at(1.00,1.5){};
        \draw[Edge](r)--(1);
    \end{tikzpicture}
    \begin{tikzpicture}[Centering,xscale=0.17,yscale=0.25]
        \node(0)at(0.00,-1.50){};
        \node(2)at(2.00,-1.50){};
        \node[NodeST](1)at(1.00,0.00){$\GenB$};
        \draw[Edge](0)--(1);
        \draw[Edge](2)--(1);
        \node(r)at(1.00,1.5){};
        \draw[Edge](r)--(1);
    \end{tikzpicture}
    \begin{tikzpicture}[Centering,xscale=0.19,yscale=0.17]
        \node(0)at(0.00,-2.33){};
        \node(2)at(1.00,-2.33){};
        \node(3)at(2.00,-4.67){};
        \node(5)at(3.00,-4.67){};
        \node(6)at(4.00,-4.67){};
        \node[NodeST](1)at(1.00,0.00){$\GenC$};
        \node[NodeST](4)at(3.00,-2.33){$\GenC$};
        \draw[Edge](0)--(1);
        \draw[Edge](2)--(1);
        \draw[Edge](3)--(4);
        \draw[Edge](4)--(1);
        \draw[Edge](5)--(4);
        \draw[Edge](6)--(4);
        \node(r)at(1.00,2){};
        \draw[Edge](r)--(1);
    \end{tikzpicture}, \enspace
    \emptyset, \enspace \emptyset}}.
\end{equation}
\end{subequations}
\medbreak

For any $\GeneratingSet$-tree, we denote by $\PrefixSet(\TreeT)$ the set
of all prefixes of~$\TreeT$.
\medbreak

\begin{Lemma} \label{lem:union_minimal_consistent_words}
    Let $\GeneratingSet$ be an alphabet and $\SetP$ be a subset of
    \begin{math}
        \SyntaxTrees(\GeneratingSet) \setminus \Bra{\Leaf}.
    \end{math}
    If $\TreeT$ is a $\GeneratingSet$-tree having its root labeled by
    $\GenA \in \GeneratingSet$ and prefix-avoiding $\SetP_\GenA$, then
    there is a minimal $\SetP_\GenA$-consistent word $\SetS$ such that
    $\TreeT$ is $\SetS$-admissible.
\end{Lemma}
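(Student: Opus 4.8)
The plan is to build the desired minimal consistent word $\SetS$ explicitly from $\TreeT$, by choosing, for each pattern $\TreeR \in \SetP_\GenA$, one subtree slot of $\TreeR$ that $\TreeT$ already "witnesses" the avoidance of, and then pruning.

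First I would write $\TreeT = \GenA \circ \Han{\TreeT(1), \dots, \TreeT(k)}$, where $k$ is the arity of $\GenA$; this is legitimate since the root of $\TreeT$ is labeled by $\GenA$. Now fix any $\TreeR \in \SetP_\GenA$. Since $\TreeR$ has root labeled by $\GenA$ as well, Lemma~\ref{lem:prefix_recursive} applies: $\TreeR \Prefix \TreeT$ would hold if and only if $\TreeR(i) \Prefix \TreeT(i)$ for all $i \in [k]$. But $\TreeT$ prefix-avoids $\SetP_\GenA$, so $\TreeR \NotPrefix \TreeT$, whence there is at least one index $i_\TreeR \in [k]$ with $\TreeR(i_\TreeR) \NotPrefix \TreeT(i_\TreeR)$. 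Moreover $\TreeR(i_\TreeR) \ne \Leaf$: indeed $\Leaf \Prefix \TreeT(i_\TreeR)$ always holds (by the first case of Lemma~\ref{lem:prefix_recursive}), so if $\TreeR(i_\TreeR)$ were the leaf we would not have $\TreeR(i_\TreeR) \NotPrefix \TreeT(i_\TreeR)$. (We also note $\Corolla{\GenA} \notin \SetP$, for otherwise no tree prefix-avoids $\SetP_\GenA$, so $\SetP_\GenA$ has no corolla and the above reasoning is never vacuous in a problematic way.) Choosing one such index $i_\TreeR$ for every $\TreeR \in \SetP_\GenA$, I would set
\begin{equation}
    \SetS_i := \Bra{\TreeR(i_\TreeR) : \TreeR \in \SetP_\GenA, \; i_\TreeR = i}
\end{equation}
for each $i \in [k]$, and $\SetS := \Par{\SetS_1, \dots, \SetS_k}$. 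By construction, for every $\TreeR \in \SetP_\GenA$ the tree $\TreeR(i_\TreeR) \ne \Leaf$ lies in $\SetS_{i_\TreeR}$, so $\SetS$ is $\SetP_\GenA$-consistent; and since each $\TreeR(i_\TreeR) \NotPrefix \TreeT(i_\TreeR)$, each $\TreeT(i)$ prefix-avoids $\SetS_i$, so $\TreeT$ is $\SetS$-admissible.

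It remains to pass from $\SetS$ to a \emph{minimal} $\SetP_\GenA$-consistent word that still keeps $\TreeT$ admissible. Here I would argue that admissibility is preserved under shrinking the letters: if $\SetS' = \Par{\SetS'_1, \dots, \SetS'_k}$ with $\SetS'_i \subseteq \SetS_i$ for all $i$, then $\TreeT$ is still $\SetS'$-admissible (fewer trees to prefix-avoid). So I would take $\SetS'$ to be a $\SetP_\GenA$-consistent word with $\SetS'_i \subseteq \SetS_i$ for all $i$, chosen so that the total number $\sum_{i \in [k]} \# \SetS'_i$ of trees is minimal among all such words dominated by $\SetS$ — this minimum is attained because $\SetS$ itself is finite (it has at most $\# \SetP_\GenA < \infty$ trees in it, since $\SetP$ could be infinite but... wait, $\SetP_\GenA$ may be infinite too). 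Let me instead argue directly: consider the poset of $\SetP_\GenA$-consistent words dominated by $\SetS$, ordered by componentwise inclusion; since each letter of $\SetS$ might be infinite this poset need not have minimal elements a priori, but one can still extract a minimal consistent word as follows — well-order $\SetP_\GenA$ and greedily remove from $\SetS$ any tree $\TreeR(i_\TreeR)$ whose removal keeps the word $\SetP_\GenA$-consistent; a transfinite-induction / Zorn's-lemma argument on the chain of successive removals yields a word $\SetS'$ that is $\SetP_\GenA$-consistent and from which no single tree can be removed, i.e.\ a minimal one (using that the minimality condition of the statement, phrased via $\WordsSum$-decompositions, is equivalent to "no single tree is removable"). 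Since $\SetS'_i \subseteq \SetS_i$, $\TreeT$ remains $\SetS'$-admissible, and $\SetS'$ is the desired word.

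The main obstacle I anticipate is precisely this last step: verifying that a minimal $\SetP_\GenA$-consistent word below $\SetS$ exists when $\SetP_\GenA$ is infinite, and checking that "minimal" in the $\WordsSum$-sense of the paper coincides with "no tree removable". For the latter: if $\SetS'$ has a removable tree $\TreeR(i_\TreeR)$, split $\SetS'$ as the $\WordsSum$ of the word obtained by removing it (still consistent) and the word having $\Bra{\TreeR(i_\TreeR)}$ in slot $i_\TreeR$ and $\emptyset$ elsewhere, contradicting minimality; conversely a non-trivial $\WordsSum$-decomposition exhibits a removable tree. For existence, I would apply Zorn's lemma to the collection of $\SetP_\GenA$-consistent words dominated by $\SetS$, ordered by reverse componentwise inclusion, checking that the componentwise intersection of a chain of consistent words dominated by $\SetS$ is again $\SetP_\GenA$-consistent — this uses that for each fixed $\TreeR$ the finitely-many candidate slots $\TreeR(i)$ force, in any chain, some slot to stabilize containing $\TreeR(i_\TreeR)$. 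This gives the minimal word, completing the proof.
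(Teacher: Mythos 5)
Your proof is correct and follows the same overall strategy as the paper's: use Lemma~\ref{lem:prefix_recursive} to extract, from the hypothesis that $\TreeT$ prefix-avoids $\SetP_\GenA$, a $\SetP_\GenA$-consistent word for which $\TreeT$ is admissible, and then shrink it to a minimal one. The two constructions of the intermediate word differ: the paper takes the largest possible choice, $\SetS_i := \SyntaxTrees(\GeneratingSet) \setminus \PrefixSet(\TreeT(i))$, which makes $\SetS$-admissibility of $\TreeT$ immediate and absorbs every witness at once, whereas you select a single witness index $i_\TreeR$ per pattern $\TreeR$, giving a much smaller consistent word (at the cost of an application of choice when $\SetP_\GenA$ is infinite). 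The more substantial difference is the last step: the paper simply asserts that ``by definition of minimal $\SetP_\GenA$-consistent words'' there is a minimal consistent word below the constructed one, which is not automatic when $\SetP_\GenA$ is infinite (the lemma imposes no finiteness hypothesis). Your Zorn's-lemma argument fills this in correctly: a decreasing chain of consistent words has consistent componentwise intersection because, for each fixed $\TreeR$, the admissible slots form a decreasing chain of nonempty subsets of the finite set $[k]$; and your observation that minimality in the $\WordsSum$-sense is equivalent to no single tree being removable is also right. So your route is, if anything, more complete than the paper's on the one point where the paper's proof is terse.
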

\begin{proof}
    Let us denote by $k$ the arity of $\GenA$ and let
    $\SetS := \Par{\SetS_1, \dots, \SetS_k}$ be the word of subsets of
    $\SyntaxTrees(\GeneratingSet)$ defined by
    \begin{math}
        \SetS_i :=
        \SyntaxTrees(\GeneratingSet) \setminus \PrefixSet(\TreeT(i)).
    \end{math}
    Since $\TreeT$ prefix-avoids $\SetP_\GenA$, by
    Lemma~\ref{lem:prefix_recursive}, for any $\TreeR \in \SetP_\GenA$,
    there is an $i \in [k]$ such that $\TreeR(i) \ne \Leaf$ and
    $\TreeR(i) \NotPrefix \TreeT(i)$. This leads to the fact that
    $\TreeR(i) \notin \PrefixSet(\TreeT(i))$, so that
    $\TreeR(i) \in \SetS_i$. For this reason, $\SetS$ is
    $\SetP_\GenA$-consistent. Moreover, it follows directly from the
    definition of $\SetS$ that $\TreeT$ is $\SetS$-admissible. Finally,
    by definition of minimal $\SetP_\GenA$-consistent words, there
    exists a minimal $\SetP_\GenA$-consistent word
    $\SetS' := \Par{\SetS'_1, \dots, \SetS'_k}$ such that
    $\SetS'_i \subseteq \SetS_i$ for all $i \in [k]$. The statement of
    the lemma follows.
\end{proof}
\medbreak

By combining Lemmas~\ref{lem:consistent_words}
and~\ref{lem:union_minimal_consistent_words} together, it follows that
for any subset $\SetP$ of
\begin{math}
    \SyntaxTrees(\GeneratingSet) \setminus \Bra{\Leaf}
\end{math}
and any letter $\GenA \in \GeneratingSet$, a $\GeneratingSet$-tree
$\TreeT$ having its root labeled by $\GenA$ prefix-avoids $\SetP$ if and
only if there exists a minimal $\SetP_\GenA$-consistent word $\SetS$
such that $\TreeT$ is $\SetS$-admissible.
\medbreak

\begin{Lemma} \label{lem:admissible_words_factor_prefix}
    Let $\GeneratingSet$ be an alphabet, $\SetP$ and $\SetQ$ be two
    subsets of
    \begin{math}
        \SyntaxTrees(\GeneratingSet) \setminus \Bra{\Leaf},
    \end{math}
    and $\TreeT$ be a $\GeneratingSet$-tree having its root labeled by
    $\GenA \in \GeneratingSet(k)$. Then, $\TreeT$ factor-avoids $\SetP$
    and prefix-avoids $\SetQ$ if and only if for all $i \in [k]$,
    $\TreeT(i)$ factor-avoid $\SetP$ and there exists a minimal
    $(\SetP \cup \SetQ)_\GenA$-consistent word $\SetS$ such that
    $\TreeT$ is $\SetS$-admissible.
\end{Lemma}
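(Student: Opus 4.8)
The plan is to prove the two implications of the equivalence in turn, each time reducing to Lemmas~\ref{lem:prefix_recursive}, \ref{lem:consistent_words}, and \ref{lem:union_minimal_consistent_words}. Two elementary observations will be used throughout. First, for every $i \in [k]$ the subtree $\TreeT(i)$ is a factor of $\TreeT$, so by transitivity of $\Factor$ (Proposition~\ref{prop:posets_factors_prefixes}) any tree factor-avoided by $\TreeT$ is also factor-avoided by each $\TreeT(i)$. Second, $(\SetP \cup \SetQ)_\GenA = \SetP_\GenA \cup \SetQ_\GenA$, directly from the definition of the sets $\SetP_\GenA$. I will also repeatedly use that a prefix occurrence of a pattern in $\TreeT$ forces that pattern's root label to be the root label of $\TreeT$ (a special case of Lemma~\ref{lem:prefix_recursive}), since patterns are never equal to $\Leaf$.

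For the forward implication, I would assume that $\TreeT$ factor-avoids $\SetP$ and prefix-avoids $\SetQ$. Each $\TreeT(i)$ then factor-avoids $\SetP$ by the first observation above. Since prefixes are factors (Proposition~\ref{prop:posets_factors_prefixes}), $\TreeT$ prefix-avoids $\SetP$ and hence $\SetP_\GenA$; it also prefix-avoids $\SetQ_\GenA$, so by the second observation it prefix-avoids $(\SetP \cup \SetQ)_\GenA$. The root of $\TreeT$ being labeled by $\GenA$, Lemma~\ref{lem:union_minimal_consistent_words} (applied with $\SetP \cup \SetQ$ in the role of $\SetP$) then yields a minimal $(\SetP \cup \SetQ)_\GenA$-consistent word $\SetS$ for which $\TreeT$ is $\SetS$-admissible, as required.

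For the converse, I would assume that every $\TreeT(i)$ factor-avoids $\SetP$ and that $\TreeT$ is $\SetS$-admissible for some minimal $(\SetP \cup \SetQ)_\GenA$-consistent word $\SetS$. Lemma~\ref{lem:consistent_words} gives that $\TreeT$ prefix-avoids $(\SetP \cup \SetQ)_\GenA$, hence both $\SetP_\GenA$ and $\SetQ_\GenA$. Prefix-avoidance of all of $\SetQ$ follows: a prefix occurrence of some $\TreeS \in \SetQ$ would have root label $\GenA$, placing $\TreeS$ in $\SetQ_\GenA$, a contradiction. It remains to show $\TreeT$ factor-avoids $\SetP$. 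Given $\TreeS \in \SetP$ with $\TreeS \Factor \TreeT$, write $\TreeT = \TreeR \circ_j \Par{\TreeS \circ \Han{\TreeR_1, \dots, \TreeR_{|\TreeS|}}}$ and split on whether $\TreeR = \Leaf$: if so, $\TreeS \Prefix \TreeT$ and $\TreeS \in \SetP_\GenA$, a contradiction; if not, the root of $\TreeR$ is the root of $\TreeT$, so $\TreeR = \GenA \circ \Han{\TreeR(1), \dots, \TreeR(k)}$, the grafting leaf $j$ lies in a single $\TreeR(l)$, and unwinding the compositions shows $\TreeS \Factor \TreeT(l)$, contradicting that $\TreeT(l)$ factor-avoids $\SetP$.

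The hard part will be this last case distinction, namely verifying carefully that when $\TreeR \ne \Leaf$ the occurrence of $\TreeS$ inside $\TreeT = \TreeR \circ_j(\cdots)$ is confined to one subtree $\TreeT(l)$ of $\TreeT$; this rests on the elementary but slightly technical fact that the $j$th leaf of a non-leaf tree $\TreeR$ belongs to exactly one of the subtrees $\TreeR(1), \dots, \TreeR(k)$, after which the identity $\TreeS \Factor \TreeT(l)$ is merely a reassociation of partial compositions. Every other step is a routine application of the cited lemmas.
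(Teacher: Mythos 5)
Your proof is correct and follows essentially the same route as the paper's: the forward direction via Proposition~\ref{prop:posets_factors_prefixes} and Lemma~\ref{lem:union_minimal_consistent_words}, and the converse via Lemma~\ref{lem:consistent_words} together with the decomposition of a factor occurrence into either a prefix occurrence or an occurrence inside some subtree $\TreeT(l)$. You merely make explicit two steps the paper leaves implicit (that a prefix occurrence of a non-leaf pattern forces its root label to be $\GenA$, and the case split on $\TreeR = \Leaf$), which is fine.
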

\begin{proof}
    Assume that $\TreeT$ factor-avoids $\SetP$ and prefix-avoids
    $\SetQ$. The fact that $\TreeT$ factor-avoids $\SetP$ implies in
    particular that $\TreeT$ prefix-avoids $\SetP$ (see
    Proposition~\ref{prop:posets_factors_prefixes}). Hence, $\TreeT$
    prefix-avoids $\SetP \cup \SetQ$. Now, by
    Lemma~\ref{lem:union_minimal_consistent_words}, and since the root
    of $\TreeT$ is labeled by $\GenA$, there exists a minimal
    $\Par{\SetP \cup \SetQ}_\GenA$-consistent word $\SetS$ such that
    $\TreeT$ is $\SetS$-admissible. Conversely, assume that for all
    $i \in [k]$, $\TreeT(i)$ factor-avoid $\SetP$ and that there exists
    a minimal $(\SetP \cup \SetQ)_\GenA$-consistent word $\SetS$ such
    that $\TreeT$ is $\SetS$-admissible. By
    Lemma~\ref{lem:consistent_words}, $\TreeT$ prefix-avoids
    $(\SetP \cup \SetQ)_\GenA$. Therefore, since $\TreeT$ prefix-avoids
    $\SetP$ and since for each $i \in [k]$, $\TreeT(i)$ factor-avoids
    $\SetP$, we have that $\TreeT$ factor-avoids $\SetP$. Since
    moreover $\TreeT$ prefix-avoids $\SetQ$, we finally have that
    $\TreeT$ factor-avoids $\SetP$ and prefix-avoids~$\SetQ$.
\end{proof}
\medbreak

%%%%%%%%%%%%%%%%%%%%%%%%%%%%%%%%%%%%%%%%%%%%%%%%%%%%%%%%%%%%%%%%%%%%%%%%
\subsubsection{Equations for tree series}
For any subsets $\SetP$ and $\SetQ$ of
\begin{math}
    \SyntaxTrees(\GeneratingSet) \setminus \Bra{\Leaf},
\end{math}
let $\Predicate_{\SetP, \SetQ}$ be the predicate on
$\SyntaxTrees(\GeneratingSet)$ wherein
$\Predicate_{\SetP, \SetQ}(\TreeT)$ holds if and only if
$\SetP \NotFactor \TreeT$ and $\SetQ \NotPrefix \TreeT$. Let also
$\FactorPrefixSeries(\SetP, \SetQ)$ be the $\GeneratingSet$-tree series
defined by
\begin{equation}
    \FactorPrefixSeries(\SetP, \SetQ)
    := \PredicateSeries{\Predicate_{\SetP, \SetQ}}.
\end{equation}
In other terms, $\FactorPrefixSeries(\SetP, \SetQ)$ is the
characteristic series of all $\GeneratingSet$-trees factor-avoiding all
trees of $\SetP$ and prefix-avoiding all trees of $\SetQ$. Since
\begin{math}
    \FactorPrefixSeries(\SetP, \emptyset) = \FactorSeries(\SetP),
\end{math}
we can regard $\FactorPrefixSeries(\SetP, \SetQ)$ as a refinement
of~$\FactorSeries(\SetP)$. Observe also that
\begin{math}
    \FactorPrefixSeries\Par{\SetP, \SetP'} = \FactorSeries(\SetP)
\end{math}
for all subsets $\SetP'$ of $\SetP$. As a side remark, observe that
$\FactorPrefixSeries(\emptyset, \SetQ)$ is the characteristic series of
the $\GeneratingSet$-trees prefix-avoiding~$\SetQ$.
\medbreak

\begin{Theorem} \label{thm:system_trees_avoiding}
    Let $\GeneratingSet$ be an alphabet, and $\SetP$ and $\SetQ$ be two
    subsets of
    \begin{math}
        \SyntaxTrees(\GeneratingSet) \setminus \Bra{\Leaf}
    \end{math}
    such that for any $\GenA \in \GeneratingSet$, there are finitely
    many minimal $(\SetP \cup \SetQ)_\GenA$-consistent words. The
    $\GeneratingSet$-tree series $\FactorPrefixSeries(\SetP, \SetQ)$
    satisfies
    \begin{equation} \label{equ:system_trees_avoiding}
        \FactorPrefixSeries(\SetP, \SetQ) =
        \Leaf
        +
        \sum_{\substack{
            k \geq 1 \\
            \GenA \in \GeneratingSet(k)
        }}
        \enspace
        \sum_{\substack{
            \ell \geq 1 \\
            \Bra{\SetR^{(1)}, \dots, \SetR^{(\ell)}} \subseteq
            \MinimalConsistent\Par{(\SetP \cup \SetQ)_\GenA} \\
            \Par{\SetS_1, \dots, \SetS_k} =
            \SetR^{(1)} \WordsSum \cdots \WordsSum \SetR^{(\ell)}
        }}
        (-1)^{1 + \ell}
        \;
        \GenA \Composition
        \Han{
        \FactorPrefixSeries\Par{\SetP, \SetS_1}, \dots,
        \FactorPrefixSeries\Par{\SetP, \SetS_k}}.
    \end{equation}
\end{Theorem}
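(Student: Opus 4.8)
The plan is to establish the identity~\eqref{equ:system_trees_avoiding} by partitioning the $\GeneratingSet$-trees counted by $\FactorPrefixSeries(\SetP, \SetQ)$ according to the label $\GenA$ of their root, and then expressing each block as a composition product governed by the minimal consistent words at $\GenA$. First I would dispatch the leaf: $\Leaf$ is the unique $\GeneratingSet$-tree of degree $0$, it factor-avoids $\SetP$ and prefix-avoids $\SetQ$ since $\Leaf \notin \SetP \cup \SetQ$, and it contributes the isolated term $\Leaf$ on the right-hand side. For the remaining trees, fix $k \geq 1$ and $\GenA \in \GeneratingSet(k)$ and consider a tree $\TreeT$ with root labeled $\GenA$. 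By Lemma~\ref{lem:admissible_words_factor_prefix}, such a $\TreeT$ is counted by $\FactorPrefixSeries(\SetP, \SetQ)$ if and only if each subtree $\TreeT(i)$ factor-avoids $\SetP$ and there exists a minimal $(\SetP \cup \SetQ)_\GenA$-consistent word $\SetS$ with $\TreeT$ being $\SetS$-admissible; that is, writing $\Predicate_{\SetS}$ for the predicate ``root labeled $\GenA$, each $\TreeT(i)$ factor-avoids $\SetP$, and $\TreeT$ is $\SetS$-admissible'', the block of trees with root $\GenA$ inside $\Predicate_{\SetP, \SetQ}$ is exactly $\PredicateSeries{\bigvee_{\SetS \in \MinimalConsistent((\SetP \cup \SetQ)_\GenA)} \Predicate_{\SetS}}$.

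Next I would apply the inclusion-exclusion formula of Lemma~\ref{lem:series_predicate} to this finite disjunction — finiteness of $\MinimalConsistent((\SetP \cup \SetQ)_\GenA)$ is precisely the hypothesis of the theorem — obtaining
\begin{equation*}
    \sum_{\substack{\ell \geq 1 \\ \{\SetR^{(1)}, \dots, \SetR^{(\ell)}\} \subseteq \MinimalConsistent((\SetP \cup \SetQ)_\GenA)}} (-1)^{1+\ell}\, \PredicateSeries{\bigwedge_{j \in [\ell]} \Predicate_{\SetR^{(j)}}}.
\end{equation*}
The crux is then to identify the conjunction $\bigwedge_{j} \Predicate_{\SetR^{(j)}}$ with a single composition product. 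I claim that for any word $\SetS = (\SetS_1, \dots, \SetS_k)$, the series $\PredicateSeries{\Predicate_{\SetS}}$ of trees with root $\GenA$, subtrees factor-avoiding $\SetP$, and $\SetS$-admissible, equals $\GenA \Composition \Han{\FactorPrefixSeries(\SetP, \SetS_1), \dots, \FactorPrefixSeries(\SetP, \SetS_k)}$: indeed being $\SetS$-admissible means $\TreeT(i)$ prefix-avoids $\SetS_i$ for each $i$, and combined with $\TreeT(i)$ factor-avoiding $\SetP$ this says exactly $\TreeT(i)$ appears in $\FactorPrefixSeries(\SetP, \SetS_i)$; since the full composition product grafts arbitrary trees onto the $k$ leaves of $\Corolla{\GenA}$ with coefficient the product of the coefficients, this is precisely the definition~\eqref{equ:composition_product_tree_series} restricted to a corolla input. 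It then remains to observe that $\bigwedge_{j \in [\ell]} \Predicate_{\SetR^{(j)}}$, where each $\SetR^{(j)} = (\SetR^{(j)}_1, \dots, \SetR^{(j)}_k)$, requires $\TreeT(i)$ to prefix-avoid every $\SetR^{(j)}_i$, i.e.\ to prefix-avoid $\bigcup_{j} \SetR^{(j)}_i$; hence $\bigwedge_{j} \Predicate_{\SetR^{(j)}} = \Predicate_{\SetS}$ with $\SetS = \SetR^{(1)} \WordsSum \cdots \WordsSum \SetR^{(\ell)}$ by the definition of $\WordsSum$.

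Substituting this identification into the inclusion-exclusion sum and then summing over all $k \geq 1$ and $\GenA \in \GeneratingSet(k)$ — the blocks indexed by distinct root labels being disjoint and exhausting all trees of positive degree — yields exactly the right-hand side of~\eqref{equ:system_trees_avoiding}, completing the proof. The main obstacle I anticipate is the bookkeeping in the claim of the previous paragraph: one must check carefully that $\SetS$-admissibility together with subtree-wise factor-avoidance of $\SetP$ is equivalent to each $\TreeT(i)$ lying in the support of $\FactorPrefixSeries(\SetP, \SetS_i)$, and that no tree is double-counted across different $\GenA$ or across the leaf term — but all of this is routine given Lemmas~\ref{lem:series_predicate}, \ref{lem:prefix_recursive}, and~\ref{lem:admissible_words_factor_prefix}, with the only genuinely delicate point being that the theorem's finiteness hypothesis is exactly what licenses the application of Lemma~\ref{lem:series_predicate} to a potentially infinite $\SetP \cup \SetQ$.
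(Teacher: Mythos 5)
Your proposal is correct and follows essentially the same route as the paper's own proof: partition by the root label, characterize the block at $\GenA$ as the disjunction over minimal $(\SetP \cup \SetQ)_\GenA$-consistent words of admissibility predicates via Lemma~\ref{lem:admissible_words_factor_prefix}, apply the inclusion--exclusion formula of Lemma~\ref{lem:series_predicate} (licensed by the finiteness hypothesis), and identify conjunctions of predicates with the $\WordsSum$ of the corresponding words and each predicate series with a composition product $\GenA \Composition \Han{\FactorPrefixSeries(\SetP, \SetS_1), \dots, \FactorPrefixSeries(\SetP, \SetS_k)}$. The only cosmetic difference is that the paper's predicate $\Predicate_{\GenA, \SetS}$ imposes the global conditions $\SetP \NotFactor \TreeT$ and $\SetQ \NotPrefix \TreeT$ directly rather than subtree-wise, but the two formulations coincide by the same Lemma~\ref{lem:admissible_words_factor_prefix} you invoke.
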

\begin{proof}
    For any $\GenA \in \GeneratingSet(k)$ and any
    $\SetS \in \MinimalConsistent\Par{(\SetP \cup \SetQ)_\GenA}$, let
    $\Predicate_{\GenA, \SetS}$ be the predicate on
    $\SyntaxTrees(\GeneratingSet)$ wherein
    $\Predicate_{\GenA, \SetS}(\TreeT)$ holds if and only if
    $\SetP \NotFactor \TreeT$, $\SetQ \NotPrefix \TreeT$, and $\TreeT$
    is $\SetS$-admissible. As a consequence of
    Lemma~\ref{lem:admissible_words_factor_prefix}, we have
    \begin{equation}
        \PredicateSeries{\Predicate_{\GenA, \SetS}} =
        \GenA \Composition
        \Han{\FactorPrefixSeries\Par{\SetP, \SetS_1}, \dots,
        \FactorPrefixSeries\Par{\SetP, \SetS_k}}.
    \end{equation}
    Now, observe that for any
    \begin{math}
        \SetS, \SetS'
        \in
        \MinimalConsistent\Par{(\SetP \cup \SetQ)_\GenA},
    \end{math}
    the predicates $\Predicate_{\GenA, \SetS \WordsSum \SetS'}$ and
    $\Predicate_{\GenA, \SetS} \wedge \Predicate_{\GenA, \SetS'}$ are
    equal. Observe also that  the characteristic series
    $\SeriesF_\GenA$ of the $\GeneratingSet$-trees factor-avoiding
    $\SetP$, prefix-avoiding $\SetQ$, and with a root labeled by
    $\GenA$, satisfies
    \begin{equation}
        \SeriesF_\GenA =
        \PredicateSeries{\bigvee_{
            \SetS \in \MinimalConsistent\Par{(\SetP \cup \SetQ)_\GenA
        }}
        \Predicate_{\GenA, \SetS}}.
    \end{equation}
    Since, by hypothesis,
    $\MinimalConsistent\Par{(\SetP \cup \SetQ)_\GenA}$ is finite, these
    three previous properties lead, by using
    Lemma~\ref{lem:series_predicate}, to the relation
    \begin{equation}
        \SeriesF_\GenA
        =
        \sum_{\substack{
            \ell \geq 1 \\
            \Bra{\SetR^{(1)}, \dots, \SetR^{(\ell)}} \subseteq
            \MinimalConsistent\Par{(\SetP \cup \SetQ)_\GenA} \\
            \Par{\SetS_1, \dots, \SetS_k} =
            \SetR^{(1)} \WordsSum \cdots \WordsSum \SetR^{(\ell)}
        }}
        (-1)^{1 + \ell}
        \;
        \GenA \Composition
        \Han{
        \FactorPrefixSeries\Par{\SetP, \SetS_1}, \dots,
        \FactorPrefixSeries\Par{\SetP, \SetS_k}}.
    \end{equation}
    Finally, since any tree factor-avoiding $\SetP$ and prefix-avoiding
    $\SetQ$ can be either empty of have a root labeled by $\GenA$ for
    any $\GenA \in \GeneratingSet$, we have
    \begin{equation}
        \FactorPrefixSeries(\SetP, \SetQ) =
        \Leaf + \sum_{\GenA \in \GeneratingSet} \SeriesF_\GenA.
    \end{equation}
    This last relation shows that~\eqref{equ:system_trees_avoiding}
    holds.
\end{proof}
\medbreak

Let us consider an example brought by
Theorem~\ref{thm:system_trees_avoiding} by considering the
set~\eqref{equ:example_pattern_set} of patterns. We have
\begin{footnotesize}
\begin{multline} \label{equ:example_series_avoiding}
    \FactorPrefixSeries(\SetP, \emptyset) =
    \Leaf
    + \GenA \Composition \Han{
    \FactorPrefixSeries\Par{\SetP, \CorollaThree{\GenC}},
    \FactorPrefixSeries\Par{\SetP, \emptyset}}
    + \GenB \Composition \Han{
    \FactorPrefixSeries\Par{\SetP, \emptyset},
    \FactorPrefixSeries\Par{\SetP, \emptyset}} \\
    + \GenC \Composition \Han{
    \FactorPrefixSeries\Par{\SetP, \CorollaTwo{\GenA}},
    \FactorPrefixSeries\Par{\SetP, \CorollaTwo{\GenB}},
    \FactorPrefixSeries\Par{\SetP, \CorollaTwo{\GenA}}}
    + \GenC \Composition \Han{
    \FactorPrefixSeries\Par{\SetP, \CorollaTwo{\GenA}
        \CorollaTwo{\GenB}},
    \FactorPrefixSeries\Par{\SetP, \emptyset},
    \FactorPrefixSeries\Par{\SetP, \CorollaTwo{\GenA}}} \\
    + \GenC \Composition \Han{
    \FactorPrefixSeries\Par{\SetP, \CorollaTwo{\GenA} \CorollaTwo{\GenB}
    \begin{tikzpicture}[Centering,xscale=0.18,yscale=0.17]
        \node(0)at(0.00,-2.33){};
        \node(2)at(1.00,-2.33){};
        \node(3)at(2.00,-4.67){};
        \node(5)at(3.00,-4.67){};
        \node(6)at(4.00,-4.67){};
        \node[NodeST](1)at(1.00,0.00){$\GenC$};
        \node[NodeST](4)at(3.00,-2.33){$\GenC$};
        \draw[Edge](0)--(1);
        \draw[Edge](2)--(1);
        \draw[Edge](3)--(4);
        \draw[Edge](4)--(1);
        \draw[Edge](5)--(4);
        \draw[Edge](6)--(4);
        \node(r)at(1.00,2){};
        \draw[Edge](r)--(1);
    \end{tikzpicture}},
    \FactorPrefixSeries\Par{\SetP, \emptyset},
    \FactorPrefixSeries\Par{\SetP, \emptyset}}
    - \GenC \Composition \Han{
    \FactorPrefixSeries\Par{\SetP,
        \CorollaTwo{\GenA} \CorollaTwo{\GenB}},
    \FactorPrefixSeries\Par{\SetP, \CorollaTwo{\GenB}},
    \FactorPrefixSeries\Par{\SetP, \CorollaTwo{\GenA}}} \\
    - \GenC \Composition \Han{
    \FactorPrefixSeries\Par{\SetP, \CorollaTwo{\GenA} \CorollaTwo{\GenB}
    \begin{tikzpicture}[Centering,xscale=0.18,yscale=0.17]
        \node(0)at(0.00,-2.33){};
        \node(2)at(1.00,-2.33){};
        \node(3)at(2.00,-4.67){};
        \node(5)at(3.00,-4.67){};
        \node(6)at(4.00,-4.67){};
        \node[NodeST](1)at(1.00,0.00){$\GenC$};
        \node[NodeST](4)at(3.00,-2.33){$\GenC$};
        \draw[Edge](0)--(1);
        \draw[Edge](2)--(1);
        \draw[Edge](3)--(4);
        \draw[Edge](4)--(1);
        \draw[Edge](5)--(4);
        \draw[Edge](6)--(4);
        \node(r)at(1.00,2){};
        \draw[Edge](r)--(1);
    \end{tikzpicture}},
    \FactorPrefixSeries\Par{\SetP, \CorollaTwo{\GenB}},
    \FactorPrefixSeries\Par{\SetP, \CorollaTwo{\GenA}}}
    - \GenC \Composition \Han{
    \FactorPrefixSeries\Par{\SetP, \CorollaTwo{\GenA} \CorollaTwo{\GenB}
    \begin{tikzpicture}[Centering,xscale=0.18,yscale=0.17]
        \node(0)at(0.00,-2.33){};
        \node(2)at(1.00,-2.33){};
        \node(3)at(2.00,-4.67){};
        \node(5)at(3.00,-4.67){};
        \node(6)at(4.00,-4.67){};
        \node[NodeST](1)at(1.00,0.00){$\GenC$};
        \node[NodeST](4)at(3.00,-2.33){$\GenC$};
        \draw[Edge](0)--(1);
        \draw[Edge](2)--(1);
        \draw[Edge](3)--(4);
        \draw[Edge](4)--(1);
        \draw[Edge](5)--(4);
        \draw[Edge](6)--(4);
        \node(r)at(1.00,2){};
        \draw[Edge](r)--(1);
    \end{tikzpicture}},
    \FactorPrefixSeries\Par{\SetP, \emptyset},
    \FactorPrefixSeries\Par{\SetP, \CorollaTwo{\GenA}}} \\
    + \GenC \Composition \Han{
    \FactorPrefixSeries\Par{\SetP, \CorollaTwo{\GenA} \CorollaTwo{\GenB}
    \begin{tikzpicture}[Centering,xscale=0.18,yscale=0.17]
        \node(0)at(0.00,-2.33){};
        \node(2)at(1.00,-2.33){};
        \node(3)at(2.00,-4.67){};
        \node(5)at(3.00,-4.67){};
        \node(6)at(4.00,-4.67){};
        \node[NodeST](1)at(1.00,0.00){$\GenC$};
        \node[NodeST](4)at(3.00,-2.33){$\GenC$};
        \draw[Edge](0)--(1);
        \draw[Edge](2)--(1);
        \draw[Edge](3)--(4);
        \draw[Edge](4)--(1);
        \draw[Edge](5)--(4);
        \draw[Edge](6)--(4);
        \node(r)at(1.00,2){};
        \draw[Edge](r)--(1);
    \end{tikzpicture}},
    \FactorPrefixSeries\Par{\SetP, \CorollaTwo{\GenB}},
    \FactorPrefixSeries\Par{\SetP, \CorollaTwo{\GenA}}}.
\end{multline}
\end{footnotesize}%
Observe that the last term of~\eqref{equ:example_series_avoiding} is the
opposite of the antepenultimate term so that they annihilate.
\medbreak

%%%%%%%%%%%%%%%%%%%%%%%%%%%%%%%%%%%%%%%%%%%%%%%%%%%%%%%%%%%%%%%%%%%%%%%%
%%%%%%%%%%%%%%%%%%%%%%%%%%%%%%%%%%%%%%%%%%%%%%%%%%%%%%%%%%%%%%%%%%%%%%%%
\subsection{Properties and applications}
Consequences of Theorem~\ref{thm:system_trees_avoiding} are now
presented. In particular, we explain how to obtain a system of
equations of generating series to enumerate the syntax trees
factor-avoiding a set $\SetP$ of patterns and prefix-avoiding a set
$\SetQ$ of patterns. We also apply the aforementioned result for
particular sets of patterns consisting in stringy trees.
\medbreak

%%%%%%%%%%%%%%%%%%%%%%%%%%%%%%%%%%%%%%%%%%%%%%%%%%%%%%%%%%%%%%%%%%%%%%%%
\subsubsection{Systems of equations}
Given two subsets $\SetP$ and $\SetQ$ of
\begin{math}
    \SyntaxTrees(\GeneratingSet) \setminus \Bra{\Leaf}
\end{math}
satisfying the conditions of Theorem~\ref{thm:system_trees_avoiding},
one can express the series $\FactorPrefixSeries(\SetP, \SetQ)$
through~\eqref{equ:system_trees_avoiding}. Some other series
$\FactorPrefixSeries\Par{\SetP, \SetS_i}$ could appear in the
expression, and these series can themselves be expressed
through~\eqref{equ:system_trees_avoiding} when the conditions of the
theorem are satisfied. When it is the case,
Theorem~\ref{thm:system_trees_avoiding} leads to a (possibly infinite)
system of equations describing the
series~$\FactorPrefixSeries(\SetP, \SetQ)$, called the \Def{system}
of~$\FactorPrefixSeries(\SetP, \SetQ)$.
\medbreak

\begin{Lemma} \label{lem:finite_set_minimal_consistent_words}
    Let $\GeneratingSet$ be an alphabet, $\SetP$ be a subset of
    \begin{math}
        \SyntaxTrees(\GeneratingSet) \setminus \Bra{\Leaf},
    \end{math}
    and $\GenA \in \GeneratingSet(k)$. If $\SetP_\GenA$ is finite,
    then the set of all minimal $\SetP_\GenA$-consistent words is
    finite and its cardinality is no greater than~$k^{\# \SetP_\GenA}$.
\end{Lemma}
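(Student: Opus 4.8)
The plan is to exhibit an explicit injection from the set of minimal $\SetP_\GenA$-consistent words into the set of functions $\SetP_\GenA \to [k]$, which has cardinality exactly $k^{\# \SetP_\GenA}$; finiteness and the bound follow at once. First I would recall the defining property: a word $\SetS = \Par{\SetS_1, \dots, \SetS_k}$ is $\SetP_\GenA$-consistent if and only if for every $\TreeS \in \SetP_\GenA$ there is some $i \in [k]$ with $\TreeS(i) \ne \Leaf$ and $\TreeS(i) \in \SetS_i$. For a minimal such word, the key structural observation is that no $\SetS_i$ can contain any tree that is not of the form $\TreeS(i)$ for some $\TreeS \in \SetP_\GenA$: indeed, if $\TreeT \in \SetS_i$ is never an $i$th subtree of a pattern in $\SetP_\GenA$, then removing $\TreeT$ from $\SetS_i$ preserves consistency, contradicting minimality. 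More sharply, I would show that a tree $\TreeT$ belongs to $\SetS_i$ in a minimal word only if it is the \emph{unique} witness for some pattern, i.e.\ there is $\TreeS \in \SetP_\GenA$ such that $\TreeT = \TreeS(i)$ and for all $j \ne i$ (and for $i$ itself, all other trees) the only way $\SetS$ witnesses $\TreeS$ is through this entry.

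The cleanest route is the following. For a minimal $\SetP_\GenA$-consistent word $\SetS$, and for each tree $\TreeT$ lying in some component $\SetS_i$, minimality says that $\SetS$ with $\TreeT$ deleted from $\SetS_i$ is no longer consistent; hence there exists a pattern $\TreeS \in \SetP_\GenA$ whose \emph{only} witness in $\SetS$ is the pair $(i, \TreeT)$, meaning $\TreeS(i) = \TreeT \ne \Leaf$ and for every $(j, \TreeR) \ne (i, \TreeT)$ with $\TreeR \in \SetS_j$ one has $\TreeS(j) \ne \TreeR$ or $\TreeS(j) = \Leaf$. This associates to $\TreeT \in \SetS_i$ at least one pattern $\TreeS = \varphi(i, \TreeT) \in \SetP_\GenA$ with $\TreeS(i) = \TreeT$. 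The crucial point is that $\varphi$ is injective on $\bigsqcup_{i \in [k]} \SetS_i$: if $\varphi(i, \TreeT) = \varphi(i', \TreeT') = \TreeS$, then $\TreeS$ has $(i, \TreeT)$ as its unique witness and also $(i', \TreeT')$ as its unique witness, forcing $(i, \TreeT) = (i', \TreeT')$. Consequently $\sum_{i \in [k]} \# \SetS_i \le \# \SetP_\GenA$, so each component $\SetS_i$ is finite and the total number of trees appearing across all components is at most $\# \SetP_\GenA$.

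To finish and get the clean exponential bound, I would encode a minimal word by the map $\psi \colon \SetP_\GenA \to [k]$ that sends a pattern $\TreeS$ to (the least) index $i$ for which $(i, \TreeS(i))$ is a witness for $\TreeS$ in $\SetS$ — this is well defined by consistency. Recovery of $\SetS$ from $\psi$ is the reverse of the previous paragraph: one checks that in a minimal word, $\SetS_i = \Bra{\TreeS(i) : \TreeS \in \SetP_\GenA,\ \psi(\TreeS) = i}$, because the inclusion $\supseteq$ follows from consistency and the defining property, while $\subseteq$ is exactly the minimality statement that every tree in $\SetS_i$ is the unique-witness $i$th subtree of some pattern, which in turn is mapped to $i$ by $\psi$ (after possibly adjusting $\psi$ to pick the witnessing index). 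Since there are $k^{\# \SetP_\GenA}$ such maps $\psi$, the set $\MinimalConsistent\Par{\SetP_\GenA}$ is finite with cardinality at most $k^{\# \SetP_\GenA}$.

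The main obstacle is making the ``unique witness'' bookkeeping precise and verifying that the assignment $\SetS \mapsto \psi$ is genuinely injective — one must be careful that a single pattern may a priori be witnessed at several indices, so the recovery formula $\SetS_i = \Bra{\TreeS(i) : \psi(\TreeS) = i}$ needs the minimality argument to rule out ``extra'' trees in $\SetS_i$ that are subtrees of patterns routed elsewhere by $\psi$. Handling the degenerate case $\Corolla{\GenA} \in \SetP_\GenA$ (where there are no consistent words at all, and the bound holds vacuously) should be dispatched at the very start.
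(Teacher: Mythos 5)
Your proof is correct, but it takes a genuinely different route from the paper's. The paper argues by induction on $\# \SetP_\GenA$: the base case is the unique minimal consistent word $\Par{\emptyset, \dots, \emptyset}$, and the inductive step observes that adjoining one new pattern $\TreeS$ produces from each minimal consistent word at most $k$ new ones (one for each index $j$ with $\TreeS(j) \ne \Leaf$, obtained by inserting $\TreeS(j)$ into the $j$th component), so the count is multiplied by at most $k$ at each step. You instead construct a direct injection $\SetS \mapsto \psi$ from $\MinimalConsistent\Par{\SetP_\GenA}$ into the set of maps $\SetP_\GenA \to [k]$, based on the observation that in a minimal word every tree of every component is the unique witness of some pattern; the recovery formula $\SetS_i = \Bra{\TreeS(i) : \psi(\TreeS) = i}$ then determines $\SetS$ from $\psi$. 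Your unique-witness bookkeeping is sound: both inclusions you sketch do hold (note that a unique witnessing index is automatically the least one, so the ``adjustment'' of $\psi$ you worry about is not needed), and your argument yields as a by-product the sharper structural fact $\sum_{i \in [k]} \# \SetS_i \leq \# \SetP_\GenA$, which the paper's induction does not make explicit. What the paper's approach buys is brevity, at the cost of leaving the minimality of the augmented words and the exhaustiveness of that construction essentially unjustified; what your approach buys is a self-contained and fully rigorous count, at the cost of the extra care needed to make the witness encoding canonical.
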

\begin{proof}
    We proceed by induction on the cardinality $\ell$ of $\SetP_\GenA$.
    If $\ell = 0$, the only $\SetP_\GenA$-consistent word is the word
    $\Par{\SetS_1, \dots, \SetS_k}$ such that $\SetS_i := \emptyset$ for
    all $i \in [k]$. Hence, the statement of the lemma holds in this
    case. Assume now that the statement of the lemma holds when
    $\SetP_\GenA$ has cardinality $\ell$. Let $\TreeS$ be a
    $\GeneratingSet$-tree having its root labeled by $\GenA$. If
    $\SetS := \Par{\SetS_1, \dots, \SetS_k}$ is a
    $\SetP_\GenA$-consistent word, when $j \in [k]$ is an index such
    that $\TreeS(j) \ne \Leaf$, let us denote by
    $\SetS^{(j)} := \Par{\SetS'_1, \dots, \SetS'_k}$ the word defined by
    $\SetS'_j := \SetS_j \cup \Bra{\TreeS(j)}$ and $\SetS'_i := \SetS_i$
    for any $i \in [k] \setminus \{j\}$. By construction, $\SetS^{(j)}$
    is a minimal $\Par{\SetP \cup \{\TreeS\}}_\GenA$-consistent word and
    there are at most $k$ such words. By induction hypothesis, there are
    at most $k^\ell$ minimal $\SetP_\GenA$-consistent words and
    therefore, at most $k^{\ell + 1}$ minimal
    $\Par{\SetP \cup \{\TreeS\}}_\GenA$-consistent words.
\end{proof}
\medbreak

For any $\GeneratingSet$-tree, we denote by $\SuffixSet(\TreeT)$ the set
of all suffixes of~$\TreeT$.
\medbreak

\begin{Proposition} \label{prop:system_trees_avoiding_finiteness}
    Let $\GeneratingSet$ be an alphabet, and $\SetP$ and $\SetQ$ be two
    subsets of
    \begin{math}
        \SyntaxTrees(\GeneratingSet) \setminus \Bra{\Leaf}.
    \end{math}
    If $\SetP$ and $\SetQ$ are finite, then the system of
    $\FactorPrefixSeries(\SetP, \SetQ)$ is well-defined and contains
    finitely many equations.
\end{Proposition}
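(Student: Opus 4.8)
The plan is to show that the recursive unfolding of Theorem~\ref{thm:system_trees_avoiding}, starting from $\FactorPrefixSeries(\SetP, \SetQ)$, only ever produces finitely many distinct series of the form $\FactorPrefixSeries(\SetP, \SetR)$, and that each equation in the system is itself a finite expression. The key observation is that the first argument $\SetP$ never changes: every series appearing on the right-hand side of \eqref{equ:system_trees_avoiding} is of the form $\FactorPrefixSeries(\SetP, \SetS_i)$ for some set $\SetS_i$ of $\GeneratingSet$-trees. So it suffices to control the possible second arguments $\SetS_i$.

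First I would verify that the conditions of Theorem~\ref{thm:system_trees_avoiding} hold at every step, so that each equation is legitimately produced: since $\SetP$ and $\SetQ$ are finite, for each $\GenA \in \GeneratingSet$ the set $(\SetP \cup \SetQ)_\GenA$ is finite, hence by Lemma~\ref{lem:finite_set_minimal_consistent_words} there are finitely many minimal $(\SetP \cup \SetQ)_\GenA$-consistent words; and more generally, whenever a series $\FactorPrefixSeries(\SetP, \SetR)$ is produced with $\SetR$ finite, the set $(\SetP \cup \SetR)_\GenA$ is finite and Lemma~\ref{lem:finite_set_minimal_consistent_words} again applies. So the system is well-defined provided every second argument that ever appears is a finite set.

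Next I would identify a finite universe in which all the second arguments live. Looking at \eqref{equ:system_trees_avoiding}, each new second argument $\SetS_i$ is the $i$th letter of a word $\SetR^{(1)} \WordsSum \cdots \WordsSum \SetR^{(\ell)}$, where each $\SetR^{(j)}$ is a minimal $(\SetP \cup \SetR)_\GenA$-consistent word, and each letter of a minimal consistent word consists of trees of the form $\TreeS(i)$ for $\TreeS \in (\SetP \cup \SetR)_\GenA$ — these are subtrees of trees already in $\SetP \cup \SetR$. Unfolding the recursion, every tree that ever appears inside any second argument is a subtree of a subtree of $\dots$ of a tree of $\SetP \cup \SetQ$, hence lies in the set $D$ of all proper subtrees (of all depths) of trees of $\SetP \cup \SetQ$. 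Since $\SetP$ and $\SetQ$ are finite and each tree has finitely many subtrees, $D$ is finite. Consequently every second argument is a subset of $D$, so there are at most $2^{\# D}$ possible second arguments and at most $2^{\# D}$ distinct series in the system; moreover each individual equation is a finite sum, since the summation indices $k$, $\GenA$, $\ell$, and $\Bra{\SetR^{(1)},\dots,\SetR^{(\ell)}}$ all range over finite sets (using finiteness of $\GeneratingSet$ on each arity via $\SetP_\GenA$ being empty for all but finitely many $\GenA$ — more carefully, the outer sum over $\GenA$ is infinite only in that the $\MinimalConsistent$ are singletons $\Par{\emptyset,\dots,\emptyset}$ for all $\GenA$ not appearing as a root in $\SetP\cup\SetQ$, contributing the plain term $\GenA \Composition \Han{\FactorPrefixSeries(\SetP,\emptyset),\dots}$; so each equation is a finite sum of such terms plus possibly infinitely many ``generic'' terms, one per letter of $\GeneratingSet$).

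The main obstacle I anticipate is precisely this last point: the outer sum $\sum_{\GenA \in \GeneratingSet(k)}$ in \eqref{equ:system_trees_avoiding} is genuinely infinite if $\GeneratingSet$ is infinite, so ``finitely many equations, each a finite expression'' needs the right reading. The honest statement is that the set of \emph{distinct} series $\FactorPrefixSeries(\SetP,\SetR)$ invoked is finite (bounded by $2^{\#D}$), and that for each letter $\GenA$ not occurring as a root of any pattern in $\SetP\cup\SetQ$, the corresponding summand collapses to the single uniform term $\GenA \Composition \Han{\FactorPrefixSeries(\SetP,\emptyset),\dots,\FactorPrefixSeries(\SetP,\emptyset)}$, so the right-hand side of each equation is a well-defined $\GeneratingSet$-tree series with only finitely many ``exceptional'' summands. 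I would phrase the conclusion accordingly, deferring the case of infinite $\GeneratingSet$ to this structural description while noting that when $\GeneratingSet$ is itself finite the system is finite in the naive sense as well.
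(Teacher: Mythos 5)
Your proof is correct and follows essentially the same route as the paper: the paper's argument also combines Lemma~\ref{lem:finite_set_minimal_consistent_words} with the observation that every second argument $\SetS_i$ arising in the unfolding is a subset of the finite set $\bigcup_{\TreeT \in \SetP \cup \SetQ} \SuffixSet(\TreeT)$, which is your set $D$ under another name (the complete subtrees you describe are exactly the suffixes in the paper's terminology). Your extra remarks on the recursion closing up under taking subtrees and on the reading of the statement for infinite alphabets are sound refinements of the same argument rather than a different approach.
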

\begin{proof}
    Let $\GenA \in \GeneratingSet(k)$. Since $\SetP$ and $\SetQ$ are
    finite, $(\SetP \cup \SetQ)_\GenA$ is finite. Therefore, by
    Lemma~\ref{lem:finite_set_minimal_consistent_words},
    $\MinimalConsistent\Par{(\SetP \cup \SetQ)_\GenA}$ is finite.
    Moreover, any minimal $(\SetP \cup \SetQ)_\GenA$-consistent word
    $\Par{\SetS_1, \dots, \SetS_k}$ is such that each $\SetS_i$,
    $i \in [k]$, contains only suffixes of trees of
    $(\SetP \cup \SetQ)_\GenA$. For this reason, all terms
    $\FactorPrefixSeries\Par{\SetP, \SetS_i}$ appearing in the
    equation~\eqref{equ:system_trees_avoiding} of
    $\FactorPrefixSeries(\SetP, \SetQ)$ satisfy
    \begin{equation} \label{equ:system_trees_avoiding_finiteness}
        \SetS_i \subseteq
        \bigcup_{\TreeT \in \SetP \cup \SetQ} \SuffixSet(\TreeT).
    \end{equation}
    Since any $\GeneratingSet$-tree has a finite number of suffixes,
    there are finitely many sets $\SetS_i$
    satisfying~\eqref{equ:system_trees_avoiding_finiteness}. The
    statement of the proposition follows.
\end{proof}
\medbreak

%%%%%%%%%%%%%%%%%%%%%%%%%%%%%%%%%%%%%%%%%%%%%%%%%%%%%%%%%%%%%%%%%%%%%%%%
\subsubsection{Limits}
Let $\SetP$ be a subset of
\begin{math}
    \SyntaxTrees(\GeneratingSet) \setminus \Bra{\Leaf}.
\end{math}
For any integer $d \geq 0$, let
\begin{equation}
    \SetP_{|d} := \Bra{\TreeT \in \SetP : \Deg(\TreeT) \leq d}.
\end{equation}
In other words, $\SetP_{|d}$ is the subset of $\SetP$ of the patterns
having degrees no greater than~$d$.
\medbreak

\begin{Proposition} \label{prop:series_trees_avoiding_limit}
    Let $\GeneratingSet$ be an alphabet, and $\SetP$ and $\SetQ$ be two
    subsets of
    \begin{math}
        \SyntaxTrees(\GeneratingSet) \setminus \Bra{\Leaf}.
    \end{math}
    Then,
    \begin{equation}
        \lim_{d \to \infty}
        \FactorPrefixSeries\Par{\SetP_{|d}, \SetQ_{|d}}
        =
        \FactorPrefixSeries(\SetP, \SetQ).
    \end{equation}
\end{Proposition}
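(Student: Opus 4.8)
\emph{Proof proposal.} The plan is to read the convergence coefficient-wise: every series $\FactorPrefixSeries(\SetP, \SetQ)$ occurring here is a characteristic series, so all its coefficients lie in $\Bra{0, 1}$, and the claimed identity amounts to showing that for each $\GeneratingSet$-tree $\TreeT$ the coefficient $\Angle{\TreeT, \FactorPrefixSeries\Par{\SetP_{|d}, \SetQ_{|d}}}$ is eventually equal to $\Angle{\TreeT, \FactorPrefixSeries(\SetP, \SetQ)}$. As a preliminary remark I would note that $\Par{\SetP_{|d}}_{d \geq 0}$ and $\Par{\SetQ_{|d}}_{d \geq 0}$ are increasing sequences of sets whose unions are $\SetP$ and $\SetQ$ respectively, so the supports $\Support{\FactorPrefixSeries\Par{\SetP_{|d}, \SetQ_{|d}}}$ form a decreasing chain and it suffices to identify their intersection with $\Support{\FactorPrefixSeries(\SetP, \SetQ)}$.

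Fix an arbitrary $\GeneratingSet$-tree $\TreeT$ and set $n := \Deg(\TreeT)$; I claim the coefficients of $\TreeT$ agree already for every $d \geq n$. One inclusion is immediate: since $\SetP_{|d} \subseteq \SetP$ and $\SetQ_{|d} \subseteq \SetQ$, any tree factor-avoiding $\SetP$ and prefix-avoiding $\SetQ$ also factor-avoids $\SetP_{|d}$ and prefix-avoids $\SetQ_{|d}$, whence $\Support{\FactorPrefixSeries(\SetP, \SetQ)} \subseteq \Support{\FactorPrefixSeries\Par{\SetP_{|d}, \SetQ_{|d}}}$ for all $d$.

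For the reverse inclusion I would argue contrapositively. Suppose $\TreeT$ does not factor-avoid $\SetP$ or does not prefix-avoid $\SetQ$. Then there is a witness $\TreeS$ lying in $\SetP$ with $\TreeS \Factor \TreeT$, or in $\SetQ$ with $\TreeS \Prefix \TreeT$; by Proposition~\ref{prop:posets_factors_prefixes}, in either case $\TreeS$ is a factor of $\TreeT$. The one substantive point is that the degree is additive along partial and full compositions, so from the factor decomposition $\TreeT = \TreeR \circ_i \Par{\TreeS \circ \Han{\TreeR_1, \dots, \TreeR_{|\TreeS|}}}$ one gets $\Deg(\TreeT) = \Deg(\TreeR) + \Deg(\TreeS) + \sum_{j} \Deg\Par{\TreeR_j} \geq \Deg(\TreeS)$. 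Hence $\Deg(\TreeS) \leq n \leq d$, so $\TreeS \in \SetP_{|d}$ (resp. $\TreeS \in \SetQ_{|d}$), and therefore $\TreeT$ fails to factor-avoid $\SetP_{|d}$ (resp. fails to prefix-avoid $\SetQ_{|d}$); that is, $\Angle{\TreeT, \FactorPrefixSeries\Par{\SetP_{|d}, \SetQ_{|d}}} = 0 = \Angle{\TreeT, \FactorPrefixSeries(\SetP, \SetQ)}$.

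Putting the two inclusions together, for every $d \geq \Deg(\TreeT)$ the coefficient of $\TreeT$ in $\FactorPrefixSeries\Par{\SetP_{|d}, \SetQ_{|d}}$ equals that in $\FactorPrefixSeries(\SetP, \SetQ)$, which is precisely the asserted convergence. The only real obstacle is the degree-monotonicity of the factor relation recorded above; the rest is bookkeeping with the definitions of $\SetP_{|d}$, $\Factor$, and $\Prefix$.
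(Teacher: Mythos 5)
Your proposal is correct and follows essentially the same route as the paper: both arguments rest on the single observation that a factor (or prefix) $\TreeS$ of $\TreeT$ satisfies $\Deg(\TreeS) \leq \Deg(\TreeT)$, so that truncating $\SetP$ and $\SetQ$ at any $d \geq \Deg(\TreeT)$ cannot change the coefficient of $\TreeT$. You merely make explicit (via degree additivity under composition and the two support inclusions) what the paper compresses into the remark that every tree automatically avoids all patterns of degree greater than its own.
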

\begin{proof}
    Since any $\GeneratingSet$-tree $\TreeT$ factor-avoids (resp.
    prefix-avoids) all patterns of degrees greater than $\Deg(\TreeT)$,
    for any $d \geq \Deg(\TreeT)$,
    \begin{equation}
        \Angle{\TreeT, \FactorPrefixSeries\Par{\SetP, \SetQ}}
        =
        \Angle{\TreeT, \FactorPrefixSeries\Par{\SetP_{|d}, \SetQ_{|d}}}.
    \end{equation}
    This implies that the coefficients of the series
    $\FactorPrefixSeries\Par{\SetP, \SetQ}$ and
    $\FactorPrefixSeries\Par{\SetP_{|d}, \SetQ_{|d}}$ coincide for all
    the $\GeneratingSet$-trees of degrees no greater than $d$. The
    statement of the proposition follows.
\end{proof}
\medbreak

Theorem~\ref{thm:system_trees_avoiding} and
Proposition~\ref{prop:series_trees_avoiding_limit} allow us together to
obtain systems of equations for $\FactorPrefixSeries\Par{\SetP, \SetQ}$
even when $\SetP$ and $\SetQ$ are infinite subsets of
\begin{math}
    \SyntaxTrees(\GeneratingSet) \setminus \Bra{\Leaf}
\end{math}
that do not satisfy the hypothesis of
Theorem~\ref{thm:system_trees_avoiding}.
\medbreak

%%%%%%%%%%%%%%%%%%%%%%%%%%%%%%%%%%%%%%%%%%%%%%%%%%%%%%%%%%%%%%%%%%%%%%%%
\subsubsection{Generating series and systems of equations}
For any subset $\SetP$ of
\begin{math}
    \SyntaxTrees(\GeneratingSet) \setminus \Bra{\Leaf},
\end{math}
let $\FactorEnumSeries(\SetP)$ be the series of
$\K \AAngle{t, q, \AlphabetQ_\GeneratingSet}$ defined by
\begin{math}
    \FactorEnumSeries(\SetP) := \Enum(\FactorSeries(\SetP)).
\end{math}
In the same way, for any subsets $\SetP$ and $\SetQ$ of
\begin{math}
    \SyntaxTrees(\GeneratingSet) \setminus \Bra{\Leaf},
\end{math}
let $\FactorPrefixEnumSeries(\SetP, \SetQ)$ be the series of
$\K \AAngle{t, q, \AlphabetQ_\GeneratingSet}$ defined by
\begin{math}
    \FactorPrefixEnumSeries(\SetP, \SetQ)
    := \Enum(\FactorPrefixSeries(\SetP, \SetQ)).
\end{math}
The series $\FactorEnumSeries(\SetP)$ is the generating series of the
set of the $\GeneratingSet$-trees factor-avoiding $\SetP$, and
$\FactorPrefixEnumSeries(\SetP, \SetQ)$ is the generating series of the
set of the $\GeneratingSet$-trees factor-avoiding $\SetP$ and
prefix-avoiding~$\SetQ$.
\medbreak

\begin{Proposition} \label{prop:system_enumeration_trees_avoiding}
    Let $\GeneratingSet$ be an alphabet, and $\SetP$ and $\SetQ$ be two
    subsets of
    \begin{math}
        \SyntaxTrees(\GeneratingSet) \setminus \Bra{\Leaf}
    \end{math}
    such that for any $\GenA \in \GeneratingSet$,
    $(\SetP \cup \SetQ)_\GenA$ is finite.  The generating series
    $\FactorPrefixEnumSeries(\SetP, \SetQ)$ satisfies
    \begin{equation} \label{equ:system_enumeration_trees_avoiding}
        \FactorPrefixEnumSeries(\SetP, \SetQ) =
        t
        +
        q
        \sum_{\substack{
            k \geq 1 \\
            \GenA \in \GeneratingSet(k)
        }}
        \enspace
        q_\GenA
        \sum_{\substack{
            \ell \geq 1 \\
            \Bra{\SetR^{(1)}, \dots, \SetR^{(\ell)}} \subseteq
            \MinimalConsistent\Par{(\SetP \cup \SetQ)_\GenA} \\
            \Par{\SetS_1, \dots, \SetS_k} =
            \SetR^{(1)} \WordsSum \cdots \WordsSum \SetR^{(\ell)}
        }}
        (-1)^{1 + \ell} \;
        \prod_{i \in [k]} \FactorPrefixEnumSeries\Par{\SetP, \SetS_i}.
    \end{equation}
\end{Proposition}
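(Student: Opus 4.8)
The plan is to deduce Proposition~\ref{prop:system_enumeration_trees_avoiding} directly from Theorem~\ref{thm:system_trees_avoiding} by applying the enumeration map $\Enum$ to both sides of~\eqref{equ:system_trees_avoiding} and using Proposition~\ref{prop:enumeration_composition}. First I would observe that the finiteness hypothesis on each $(\SetP \cup \SetQ)_\GenA$ guarantees, via Lemma~\ref{lem:finite_set_minimal_consistent_words}, that $\MinimalConsistent\Par{(\SetP \cup \SetQ)_\GenA}$ is finite, so that the hypothesis of Theorem~\ref{thm:system_trees_avoiding} is met and~\eqref{equ:system_trees_avoiding} is available. Since $\Enum$ is linear, it distributes over the (finite, for each fixed $\GenA$) sums on the right-hand side, sending $\Leaf$ to $t$ and preserving the signs $(-1)^{1+\ell}$.

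Next I would evaluate $\Enum$ on each composition term. Applying Proposition~\ref{prop:enumeration_composition} with $\TreeT := \Corolla{\GenA}$, which has arity $k$ and $|\Corolla{\GenA}| = k$, degree $1$, and trace $q_\GenA$, gives
\begin{equation}
    \Enum\Par{\GenA \Composition
    \Han{\FactorPrefixSeries\Par{\SetP, \SetS_1}, \dots,
    \FactorPrefixSeries\Par{\SetP, \SetS_k}}}
    =
    \frac{1}{t^{k}} \, \Enum\Par{\Corolla{\GenA}} \,
    \prod_{i \in [k]} \Enum\Par{\FactorPrefixSeries\Par{\SetP, \SetS_i}}.
\end{equation}
Here $\Enum\Par{\Corolla{\GenA}} = t^{k} q^{1} q_\GenA$ since $\Corolla{\GenA}$ has arity $k$, degree $1$, and trace $q_\GenA$, so the factor $t^{k}$ cancels and this term equals $q \, q_\GenA \prod_{i \in [k]} \FactorPrefixEnumSeries\Par{\SetP, \SetS_i}$, recalling $\FactorPrefixEnumSeries(\SetP, \SetS_i) = \Enum\Par{\FactorPrefixSeries\Par{\SetP, \SetS_i}}$ by definition. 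Pulling the common factors $q$ and $q_\GenA$ out of the respective sums yields exactly the right-hand side of~\eqref{equ:system_enumeration_trees_avoiding}.

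The only genuine subtlety, rather than an obstacle, is checking that all the infinite sums involved are well-defined in $\K \AAngle{t, q, \AlphabetQ_\GeneratingSet}$ so that applying $\Enum$ termwise is legitimate: one must note that $\Enum$ is always well-defined on any $\GeneratingSet$-tree series (as recalled in the text, since there are finitely many $\GeneratingSet$-trees with a fixed trace), and that for each fixed trace only finitely many terms of~\eqref{equ:system_trees_avoiding} contribute, so the sum over $k$ and $\GenA$ poses no convergence problem after projection. With this in hand the computation above is purely formal, and the statement of the proposition follows.
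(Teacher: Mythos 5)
Your proposal is correct and follows essentially the same route as the paper: the paper's proof simply states that the relation is obtained by taking enumerative images of both sides of~\eqref{equ:system_trees_avoiding} and invoking Proposition~\ref{prop:enumeration_composition}. Your additional observations --- that Lemma~\ref{lem:finite_set_minimal_consistent_words} bridges the finiteness hypothesis on $(\SetP \cup \SetQ)_\GenA$ to the hypothesis of Theorem~\ref{thm:system_trees_avoiding}, and that $\Enum\Par{\Corolla{\GenA}} = t^k q \, q_\GenA$ makes the prefactor come out as $q \, q_\GenA$ --- are exactly the details the paper leaves implicit.
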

\begin{proof}
    Relation~\eqref{equ:system_enumeration_trees_avoiding} is obtained
    by considering the enumerative images of the left and right members
    of~\eqref{equ:system_trees_avoiding} provided by
    Theorem~\ref{thm:system_trees_avoiding}, together with
    Proposition~\ref{prop:enumeration_composition}.
\end{proof}
\medbreak

%%%%%%%%%%%%%%%%%%%%%%%%%%%%%%%%%%%%%%%%%%%%%%%%%%%%%%%%%%%%%%%%%%%%%%%%
\subsubsection{Avoiding stringy trees}
A $\GeneratingSet$-tree $\TreeT$ is \Def{stringy} if the height of
$\TreeT$ is equal to the degree of $\TreeT$. This is equivalent to the
fact that any internal node of $\TreeT$ has at most one child being an
internal node.
\medbreak

For any set $\SetP$ of $\GeneratingSet$-trees,
$\GenA \in \GeneratingSet(k)$, and $i \in [k]$, let
\begin{equation}
    \partial_{\GenA, i}(\SetP) :=
    \Bra{\TreeS \in \SyntaxTrees(\GeneratingSet) :
    \GenA \circ_i \TreeS \in \SetP}.
\end{equation}
In other words, $\partial_{\GenA, i}(\SetP)$ is the set of the
$\GeneratingSet$-trees obtained by keeping the $i$-th subtrees of the
trees whose roots are labeled by $\GenA$ in~$\SetP$.
\medbreak

\begin{Proposition} \label{prop:system_avoiding_stringy}
    Let $\GeneratingSet$ be an alphabet and $\SetP$ and $\SetQ$ be two
    subsets of
    \begin{math}
        \SyntaxTrees(\GeneratingSet) \setminus \Bra{\Leaf}
    \end{math}
    consisting only in stringy trees. The $\GeneratingSet$-tree series
    $\FactorPrefixSeries(\SetP, \SetQ)$ satisfies
    \begin{equation} \label{equ:system_avoiding_stringy}
        \FactorPrefixSeries(\SetP, \SetQ) =
        \Leaf
        +
        \sum_{\substack{
            k \geq 1 \\
            \GenA \in \GeneratingSet(k) \\
            \Corolla{\GenA} \notin \SetP \cup \SetQ
        }}
        \GenA
        \Composition \Han{
        \FactorPrefixSeries\Par{
            \SetP, \partial_{\GenA, 1}(\SetP \cup \SetQ)},
        \dots,
        \FactorPrefixSeries\Par{
            \SetP, \partial_{\GenA, k}(\SetP \cup \SetQ)}}.
    \end{equation}
\end{Proposition}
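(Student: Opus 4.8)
The plan is to deduce the statement directly from Theorem~\ref{thm:system_trees_avoiding} by showing that, when $\SetP$ and $\SetQ$ consist only of stringy trees, the set $\MinimalConsistent\Par{(\SetP \cup \SetQ)_\GenA}$ is either empty (exactly when $\Corolla{\GenA} \in \SetP \cup \SetQ$) or a singleton, so that the inner double sum of~\eqref{equ:system_trees_avoiding} contributes at most one term for each letter $\GenA$.

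First I would record the structural property of stringy trees on which everything rests. Let $\TreeS$ be a stringy $\GeneratingSet$-tree whose root is labeled by $\GenA \in \GeneratingSet(k)$, with $\TreeS \ne \Corolla{\GenA}$. The root of $\TreeS$ is an internal node of a stringy tree, hence has at most one child being an internal node, and it has at least one such child, for otherwise all the children of the root would be leaves and $\TreeS$ would equal $\Corolla{\GenA}$. Since a subtree $\TreeS(i)$ equals $\Leaf$ if and only if the $i$th child of the root of $\TreeS$ is a leaf, there is a unique index $i \in [k]$ with $\TreeS(i) \ne \Leaf$, and then $\TreeS = \GenA \circ_i \TreeS(i)$. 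Thus, for each $i$, the set of $i$th subtrees of the trees of $(\SetP \cup \SetQ)_\GenA$ whose unique non-leaf subtree sits in position $i$ is exactly $\partial_{\GenA, i}(\SetP \cup \SetQ)$; moreover $\Leaf \in \partial_{\GenA, i}(\SetP \cup \SetQ)$ if and only if $\Corolla{\GenA} = \GenA \circ_i \Leaf$ belongs to $\SetP \cup \SetQ$.

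Next I would describe $\MinimalConsistent\Par{(\SetP \cup \SetQ)_\GenA}$. If $\Corolla{\GenA} \in \SetP \cup \SetQ$, then $\Corolla{\GenA} \in (\SetP \cup \SetQ)_\GenA$, and since $\Corolla{\GenA}$ has no non-leaf subtree, no word can be $(\SetP \cup \SetQ)_\GenA$-consistent, so $\MinimalConsistent\Par{(\SetP \cup \SetQ)_\GenA} = \emptyset$. If $\Corolla{\GenA} \notin \SetP \cup \SetQ$, then every $\TreeS \in (\SetP \cup \SetQ)_\GenA$ differs from $\Corolla{\GenA}$, hence, by the structural property, imposes on a word $\Par{\SetS_1, \dots, \SetS_k}$ the single requirement that $\TreeS(i) \in \SetS_i$ at its unique non-leaf index $i$. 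Consequently $\Par{\SetS_1, \dots, \SetS_k}$ is $(\SetP \cup \SetQ)_\GenA$-consistent if and only if $\partial_{\GenA, i}(\SetP \cup \SetQ) \subseteq \SetS_i$ for all $i \in [k]$, so the unique minimal $(\SetP \cup \SetQ)_\GenA$-consistent word is
\[
    \SetS := \Par{\partial_{\GenA, 1}(\SetP \cup \SetQ), \dots,
        \partial_{\GenA, k}(\SetP \cup \SetQ)};
\]
this word consists of subsets of $\SyntaxTrees(\GeneratingSet) \setminus \Bra{\Leaf}$ since $\Leaf \notin \partial_{\GenA, i}(\SetP \cup \SetQ)$, again because $\Corolla{\GenA} \notin \SetP \cup \SetQ$.

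Finally I would feed this into Theorem~\ref{thm:system_trees_avoiding}. Its hypothesis is met, since $\MinimalConsistent\Par{(\SetP \cup \SetQ)_\GenA}$ is finite (of cardinality at most one) for every $\GenA$. For a letter $\GenA$ with $\Corolla{\GenA} \in \SetP \cup \SetQ$, the inner sum of~\eqref{equ:system_trees_avoiding} runs over nonempty subsets of $\emptyset$ and is empty, so $\GenA$ contributes nothing. For a letter $\GenA$ with $\Corolla{\GenA} \notin \SetP \cup \SetQ$, the only choice is $\ell = 1$, $\SetR^{(1)} = \SetS$, with sign $(-1)^{1 + 1} = 1$ and $\Par{\SetS_1, \dots, \SetS_k} = \SetS$, contributing precisely $\GenA \Composition \Han{\FactorPrefixSeries\Par{\SetP, \partial_{\GenA, 1}(\SetP \cup \SetQ)}, \dots, \FactorPrefixSeries\Par{\SetP, \partial_{\GenA, k}(\SetP \cup \SetQ)}}$. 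Summing over all $\GenA$ and restoring the $\Leaf$ term yields~\eqref{equ:system_avoiding_stringy}. The only delicate point is the structural property of stringy trees and the resulting uniqueness of the minimal consistent word, together with the careful treatment of $\Corolla{\GenA}$ in both the empty and the singleton cases; once these are in place, the proof is a direct specialization of Theorem~\ref{thm:system_trees_avoiding}.
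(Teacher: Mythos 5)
Your proof is correct and follows the same route as the paper: the paper's own argument likewise observes that $\MinimalConsistent\Par{(\SetP \cup \SetQ)_\GenA}$ is empty when $\Corolla{\GenA} \in \SetP \cup \SetQ$ and otherwise reduces to the single word $\Par{\partial_{\GenA, 1}(\SetP \cup \SetQ), \dots, \partial_{\GenA, k}(\SetP \cup \SetQ)}$, and then invokes Theorem~\ref{thm:system_trees_avoiding}. You merely spell out in more detail the structural fact about stringy trees (a unique non-leaf subtree below the root) and the handling of $\Leaf$ in $\partial_{\GenA, i}$, which the paper leaves implicit.
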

\begin{proof}
    Let $\GenA \in \GeneratingSet(k)$. When $\Corolla{\GenA}$ is in
    $\SetP \cup \SetQ$, by definition of consistent words, there is no
    $(\SetP \cup \SetQ)_\GenA$-consistent word. When $\Corolla{\GenA}$
    is not in $\SetP \cup \SetQ$, by definition of minimal consistent
    words, the only minimal $(\SetP \cup \SetQ)_\GenA$-consistent word
    is the word $\SetS := \Par{\SetS_1, \dots, \SetS_k}$ where
    $\SetS_i := \partial_{\GenA, i}(\SetP \cup \SetQ)$ for any
    $i \in [k]$. Now, \eqref{equ:system_avoiding_stringy} is a
    consequence of Theorem~\ref{thm:system_trees_avoiding}.
\end{proof}
\medbreak

Let us call \Def{$\GeneratingSet$-word} any $\GeneratingSet$-tree where
$\GeneratingSet$ is an alphabet concentrated in arity $1$. This
designation is justified by the fact that one can encode any word
$\GenA_1 \dots \GenA_d$ on $\GeneratingSet$ through the tree
$\GenA_1 \circ_1 \dots \circ_1 \GenA_d$. When $\SetP$ contains only
$\GeneratingSet$-words different from the leaf, $\SetP$ specifies
forbidden configurations of word factors. Since a $\GeneratingSet$-word
is obviously stringy, Proposition~\ref{prop:system_avoiding_stringy}
provides in this context a system of equations to describe the series of
words avoiding factors. This problem consisting in enumerating words
avoiding as factors a given set was originally stated and solved
in~\cite{GJ79} (see also~\cite{NZ99}).
\medbreak

Besides, when $\GeneratingSet$ is any alphabet, let us call
\Def{$\GeneratingSet$-edge} any $\GeneratingSet$-tree of degree $2$.
This appellation is justified by the fact that any tree of degree $2$
contains exactly one edge connecting two internal nodes. When $\SetP$
contains only $\GeneratingSet$-edges, $\SetP$ specifies forbidden
configurations of edges. Since a $\GeneratingSet$-edge is obviously
stringy, Proposition~\ref{prop:system_avoiding_stringy} provides in this
context a system of equations to describe the series of trees avoiding
edges. This particular case of pattern avoidance in trees was
studied in~\cite{Lod05} (see also~\cite{Par93}).
\medbreak

%%%%%%%%%%%%%%%%%%%%%%%%%%%%%%%%%%%%%%%%%%%%%%%%%%%%%%%%%%%%%%%%%%%%%%%%
\subsubsection{Sets of patterns for some algebraic series}
Let us assume here that $\K$ is the field~$\Q$. A series $\SeriesF$ of
$\K \AAngle{t}$ is \Def{$\N$-algebraic} if $\SeriesF$ satisfies the
equation
\begin{equation} \label{equ:n_algebraic_series}
    \SeriesF = \sum_{0 \leq n \leq d} P_n \; \SeriesF^n
\end{equation}
where $d$ is a certain nonnegative integer, for all $0 \leq n \leq d$,
the $P_n$ are polynomials of $\Q \Angle{t}$ having all coefficients in
$\N$, and $\Angle{t^0, P_1} = 0$. For instance, the series $\SeriesF$
satisfying
\begin{equation} \label{equ:example_n_algebraic_series}
    \SeriesF = t + t^3 + \Par{t + t^2} \SeriesF
    + \Par{1 + 2 t^3} \SeriesF^2
\end{equation}
is $\N$-algebraic.
\medbreak

\begin{Proposition} \label{prop:realization_n_algebraic_series}
    Let $\SeriesF$ be an $\N$-algebraic series of the
    form~\eqref{equ:n_algebraic_series} such that $\Angle{t^0, P_0} = 0$
    and $\Angle{t^1, P_0} = 1$. Let the alphabet
    \begin{math}
        \GeneratingSet := \bigsqcup_{n \geq 2} \GeneratingSet(n)
    \end{math}
    where, for any $n \geq 2$,
    \begin{equation}
        \GeneratingSet(n) :=
        \bigsqcup_{\substack{
            k, \ell \geq 0 \\
            k + \ell = n}}
        \Bra{\GenA_{k, \ell}^{(m)} : 1 \leq m \leq \Angle{t^\ell, P_k}}
    \end{equation}
    and the set of patterns
    \begin{equation}
        \SetP :=
        \bigsqcup_{\substack{
            \GenA_{k, \ell}^{(m)} \in \GeneratingSet \\
            i \in [\ell]
        }}
        \Bra{\GenA_{k, \ell}^{(m)} \circ_i \GenB
        : \GenB \in \GeneratingSet}.
    \end{equation}
    The specialization
    \begin{math}
        \FactorPrefixEnumSeries(\SetP, \emptyset)_
            {|q := 1, q_\GenA := 1, \GenA \in \GeneratingSet}
    \end{math}
    satisfies the same algebraic equation as the one satisfied
    by~$\SeriesF$.
\end{Proposition}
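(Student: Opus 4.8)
The plan is to recognize $\SetP$ as a set of $\GeneratingSet$-edges, apply Proposition~\ref{prop:system_avoiding_stringy} to the pair $(\SetP, \emptyset)$, push the resulting identity of $\GeneratingSet$-tree series through the enumeration map together with the specialization $q := 1$, $q_\GenA := 1$, and finally match the outcome with the defining equation of $\SeriesF$.

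First, every element of $\SetP$ is of the form $\GenA_{k, \ell}^{(m)} \circ_i \GenB$ with $\GenB \in \GeneratingSet$, hence has degree $2$: so $\SetP$ consists only of stringy trees (indeed of $\GeneratingSet$-edges), as does $\SetQ := \emptyset$ trivially. No corolla lies in $\SetP$ since corollas have degree $1$, so every letter of $\GeneratingSet$ contributes a summand in~\eqref{equ:system_avoiding_stringy}. I would then compute the sets $\partial_{\GenA_{k, \ell}^{(m)}, i}(\SetP)$: a tree $\TreeS$ with $\GenA_{k, \ell}^{(m)} \circ_i \TreeS \in \SetP$ must have degree $1$, hence be a corolla $\Corolla{\GenB}$, and $\GenA_{k, \ell}^{(m)} \circ_i \Corolla{\GenB} \in \SetP$ holds exactly when $i \in [\ell]$. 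Thus $\partial_{\GenA_{k, \ell}^{(m)}, i}(\SetP)$ is the set $\Bra{\Corolla{\GenB} : \GenB \in \GeneratingSet}$ of all corollas when $i \in [\ell]$, and is empty when $\ell < i \leq k + \ell$. By Lemma~\ref{lem:prefix_recursive}, the only $\GeneratingSet$-tree prefix-avoiding every corolla is the leaf, so $\FactorPrefixSeries\Par{\SetP, \Bra{\Corolla{\GenB} : \GenB \in \GeneratingSet}} = \Leaf$. Substituting into~\eqref{equ:system_avoiding_stringy} gives
\[
    \FactorPrefixSeries(\SetP, \emptyset) =
    \Leaf +
    \sum_{\substack{k, \ell \geq 0 \\ k + \ell \geq 2}}
    \enspace
    \sum_{1 \leq m \leq \Angle{t^\ell, P_k}}
    \GenA_{k, \ell}^{(m)} \Composition
    \Han{\Leaf, \dots, \Leaf, \FactorPrefixSeries(\SetP, \emptyset),
    \dots, \FactorPrefixSeries(\SetP, \emptyset)},
\]
where each summand carries $\ell$ leaves (in the positions of $[\ell]$) and $k$ copies of $\FactorPrefixSeries(\SetP, \emptyset)$.

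Next I would apply the enumeration map to this identity. Since $\GeneratingSet$ has no letter of arity $1$ and each $\GeneratingSet(n)$ is finite ($P_k = 0$ for $k > d$ and each $\Angle{t^\ell, P_k} \in \N$), all enumerative images are well-defined and the specialization $q := 1$, $q_\GenA := 1$ makes sense. Using $\Enum(\Leaf) = t$, $\Enum\Par{\Corolla{\GenA_{k, \ell}^{(m)}}} = t^{k + \ell} q \, q_{\GenA_{k, \ell}^{(m)}}$, and Proposition~\ref{prop:enumeration_composition}, each composition summand has enumerative image $q \, q_{\GenA_{k, \ell}^{(m)}} \, t^\ell \, \FactorPrefixEnumSeries(\SetP, \emptyset)^k$; summing over the $\Angle{t^\ell, P_k}$ letters of type $(k, \ell)$ and then specializing, I obtain, writing $G$ for the resulting specialization of $\FactorPrefixEnumSeries(\SetP, \emptyset)$,
\[
    G = t + \sum_{\substack{k, \ell \geq 0 \\ k + \ell \geq 2}}
    \Angle{t^\ell, P_k} \, t^\ell \, G^k.
\]

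Finally I would match this with~\eqref{equ:n_algebraic_series}. Expanding $P_k = \sum_{\ell \geq 0} \Angle{t^\ell, P_k} \, t^\ell$ turns $\SeriesF = \sum_{0 \leq k \leq d} P_k \, \SeriesF^k$ into $\SeriesF = \sum_{k, \ell \geq 0} \Angle{t^\ell, P_k} \, t^\ell \, \SeriesF^k$, whose terms indexed by pairs with $k + \ell \leq 1$ are the $(0, 0)$-term $\Angle{t^0, P_0}$, the $(0, 1)$-term $\Angle{t^1, P_0} \, t$, and the $(1, 0)$-term $\Angle{t^0, P_1} \, \SeriesF$; these equal $0$, $t$, and $0$ respectively, the first two by the hypotheses on $P_0$ and the last by the definition of an $\N$-algebraic series. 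Hence $\SeriesF = t + \sum_{k + \ell \geq 2} \Angle{t^\ell, P_k} \, t^\ell \, \SeriesF^k$, the very equation satisfied by $G$, which proves the statement. I expect the only delicate steps to be the identification of the derivative sets $\partial_{\GenA_{k, \ell}^{(m)}, i}(\SetP)$ (together with the remark that prefix-avoiding every corolla forces the leaf) and this closing low-degree bookkeeping matching the extracted $t$ against $\Angle{t^1, P_0} = 1$ and the vanishing of the $(0,0)$- and $(1,0)$-terms against $\Angle{t^0, P_0} = 0$ and $\Angle{t^0, P_1} = 0$; everything else is a routine invocation of the earlier results.
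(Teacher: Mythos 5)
Your proposal is correct and follows essentially the same route as the paper: observe that $\SetP$ is stringy, apply Proposition~\ref{prop:system_avoiding_stringy} with $\partial_{\GenA_{k,\ell}^{(m)},i}(\SetP)$ equal to the set of all corollas for $i\in[\ell]$ and empty otherwise, note that prefix-avoiding all corollas forces the leaf, and push the identity through the enumeration map. The only difference is cosmetic — you make the final low-degree bookkeeping against $\Angle{t^0,P_0}$, $\Angle{t^1,P_0}$, and $\Angle{t^0,P_1}$ explicit, which the paper leaves to the reader.
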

\begin{proof}
    Observe that $\SetP$ contains only stringy trees. Therefore, the
    characteristic series  $\FactorPrefixSeries(\SetP, \emptyset)$ of
    the trees factor-avoiding $\SetP$ is described by
    Proposition~\ref{prop:system_avoiding_stringy} and satisfies
    \begin{equation}
        \FactorPrefixSeries(\SetP, \emptyset) =
        \Leaf
        + \sum_{\GenA_{k, \ell}^{(m)} \in \GeneratingSet}
        \GenA_{k, \ell}^{(m)} \Composition
        \Han{
        \underbrace{\FactorPrefixSeries\Par{\SetP, \SetQ}, \dots,
        \FactorPrefixSeries\Par{\SetP, \SetQ}}_{\times \ell},
        \underbrace{\FactorPrefixSeries(\SetP, \emptyset), \dots,
        \FactorPrefixSeries(\SetP, \emptyset)}_{\times k}}
    \end{equation}
    where $\SetQ := \Bra{\Corolla{\GenB} : \GenB \in \GeneratingSet}$.
    Now, due to
    Proposition~\ref{prop:system_enumeration_trees_avoiding},
    the enumerative image $\FactorPrefixEnumSeries(\SetP, \emptyset)$ of
    $\FactorPrefixSeries(\SetP, \emptyset)$ satisfies
    \begin{equation}
        \FactorPrefixEnumSeries(\SetP, \emptyset) =
        t
        + q \sum_{\GenA_{k, \ell}^{(m)} \in \GeneratingSet}
        q_{\GenA_{k, \ell}^{(m)}}
        \FactorPrefixEnumSeries\Par{\SetP, \SetQ}^\ell
        \FactorPrefixEnumSeries(\SetP, \emptyset)^k
    \end{equation}
    where $\FactorPrefixEnumSeries\Par{\SetP, \SetQ} = t$. The statement
    of the proposition follows.
\end{proof}
\medbreak

Observe that the alphabet $\GeneratingSet$ provided by
Proposition~\ref{prop:realization_n_algebraic_series} has
\begin{equation}
    \# \GeneratingSet =
    \sum_{\substack{
        k, \ell \geq 0 \\
        k + \ell \geq 2
    }}
    \Angle{t^\ell, P_k}
\end{equation}
letters, and the set $\SetP$ is made of
\begin{equation}
    \# \SetP =
    \Par{\# \GeneratingSet}
    \sum_{\substack{
        k \geq 0, \ell \geq 0 \\
        k + \ell \geq 2
    }}
    \Angle{t^\ell, P_k} \ell
\end{equation}
patterns.
\medbreak

Let us consider for example the series $\SeriesF$
of~\eqref{equ:example_n_algebraic_series}. The alphabet and set of
patterns specified by
Proposition~\ref{prop:realization_n_algebraic_series}, are
\begin{equation}
    \GeneratingSet := \Bra{
    \GenA_{0, 3}^{(1)}, \GenA_{1, 1}^{(1)}, \GenA_{1, 2}^{(1)},
    \GenA_{2, 0}^{(1)}, \GenA_{2, 3}^{(1)}, \GenA_{2, 3}^{(2)}}
\end{equation}
and
\begin{equation}
    \SetP := \Bra{
    \begin{tikzpicture}[Centering,xscale=0.24,yscale=0.24]
        \node(0)at(0.00,-4.67){};
        \node(2)at(1.00,-4.67){};
        \node(3)at(2.00,-4.67){};
        \node(5)at(3.00,-2.33){};
        \node(6)at(4.00,-2.33){};
        \node[NodeST](1)at(1.00,-2.33){$\GenA_{0, 3}^{(1)}$};
        \node[NodeST](4)at(3.00,0.00){$\GenA_{0, 3}^{(1)}$};
        \draw[Edge](0)--(1);
        \draw[Edge](1)--(4);
        \draw[Edge](2)--(1);
        \draw[Edge](3)--(1);
        \draw[Edge](5)--(4);
        \draw[Edge](6)--(4);
        \node(r)at(3.00,2.5){};
        \draw[Edge](r)--(4);
    \end{tikzpicture},
    \begin{tikzpicture}[Centering,xscale=0.24,yscale=0.28]
        \node(0)at(0.00,-4.00){};
        \node(2)at(2.00,-4.00){};
        \node(4)at(3.00,-2.00){};
        \node(5)at(4.00,-2.00){};
        \node[NodeST](1)at(1.00,-2.00){$\GenA_{1, 1}^{(1)}$};
        \node[NodeST](3)at(3.00,0.00){$\GenA_{0, 3}^{(1)}$};
        \draw[Edge](0)--(1);
        \draw[Edge](1)--(3);
        \draw[Edge](2)--(1);
        \draw[Edge](4)--(3);
        \draw[Edge](5)--(3);
        \node(r)at(3.00,2){};
        \draw[Edge](r)--(3);
    \end{tikzpicture},
    \dots,
    \begin{tikzpicture}[Centering,xscale=0.23,yscale=0.21]
        \node(0)at(0.00,-6.00){};
        \node(1)at(1.00,-6.00){};
        \node(3)at(2.00,-6.00){};
        \node(4)at(3.00,-6.00){};
        \node(5)at(4.00,-6.00){};
        \node(7)at(5.00,-3.00){};
        \node(8)at(6.00,-3.00){};
        \node[NodeST](2)at(2.00,-3.00){$\GenA_{2, 3}^{(2)}$};
        \node[NodeST](6)at(5.00,0.00){$\GenA_{0, 3}^{(1)}$};
        \draw[Edge](0)--(2);
        \draw[Edge](1)--(2);
        \draw[Edge](2)--(6);
        \draw[Edge](3)--(2);
        \draw[Edge](4)--(2);
        \draw[Edge](5)--(2);
        \draw[Edge](7)--(6);
        \draw[Edge](8)--(6);
        \node(r)at(5.00,2.5){};
        \draw[Edge](r)--(6);
    \end{tikzpicture},
    \dots,
    \begin{tikzpicture}[Centering,xscale=0.23,yscale=0.21]
        \node(0)at(0.00,-3.00){};
        \node(2)at(1.00,-3.00){};
        \node(3)at(2.00,-6.00){};
        \node(4)at(3.00,-6.00){};
        \node(6)at(4.00,-6.00){};
        \node(7)at(5.00,-6.00){};
        \node(8)at(6.00,-6.00){};
        \node[NodeST](1)at(1.00,0.00){$\GenA_{0, 3}^{(1)}$};
        \node[NodeST](5)at(4.00,-3.00){$\GenA_{2, 3}^{(2)}$};
        \draw[Edge](0)--(1);
        \draw[Edge](2)--(1);
        \draw[Edge](3)--(5);
        \draw[Edge](4)--(5);
        \draw[Edge](5)--(1);
        \draw[Edge](6)--(5);
        \draw[Edge](7)--(5);
        \draw[Edge](8)--(5);
        \node(r)at(1.00,2.5){};
        \draw[Edge](r)--(1);
    \end{tikzpicture},
    \dots,
    \begin{tikzpicture}[Centering,xscale=0.23,yscale=0.2]
        \node(0)at(0.00,-6.00){};
        \node(2)at(1.00,-6.00){};
        \node(3)at(2.00,-6.00){};
        \node(4)at(3.00,-3.00){};
        \node(6)at(4.00,-3.00){};
        \node(7)at(5.00,-3.00){};
        \node(8)at(6.00,-3.00){};
        \node[NodeST](1)at(1.00,-3.00){$\GenA_{0, 3}^{(1)}$};
        \node[NodeST](5)at(4.00,0.00){$\GenA_{2, 3}^{(2)}$};
        \draw[Edge](0)--(1);
        \draw[Edge](1)--(5);
        \draw[Edge](2)--(1);
        \draw[Edge](3)--(1);
        \draw[Edge](4)--(5);
        \draw[Edge](6)--(5);
        \draw[Edge](7)--(5);
        \draw[Edge](8)--(5);
        \node(r)at(4.00,2.75){};
        \draw[Edge](r)--(5);
    \end{tikzpicture},
    \dots,
    \begin{tikzpicture}[Centering,xscale=0.22,yscale=0.18]
        \node(0)at(0.00,-3.67){};
        \node(1)at(1.00,-3.67){};
        \node(10)at(8.00,-3.67){};
        \node(3)at(2.00,-7.33){};
        \node(4)at(3.00,-7.33){};
        \node(6)at(4.00,-7.33){};
        \node(7)at(5.00,-7.33){};
        \node(8)at(6.00,-7.33){};
        \node(9)at(7.00,-3.67){};
        \node[NodeST](2)at(4.00,0.00){$\GenA_{2, 3}^{(2)}$};
        \node[NodeST](5)at(4.00,-3.67){$\GenA_{2, 3}^{(2)}$};
        \draw[Edge](0)--(2);
        \draw[Edge](1)--(2);
        \draw[Edge](10)--(2);
        \draw[Edge](3)--(5);
        \draw[Edge](4)--(5);
        \draw[Edge](5)--(2);
        \draw[Edge](6)--(5);
        \draw[Edge](7)--(5);
        \draw[Edge](8)--(5);
        \draw[Edge](9)--(2);
        \node(r)at(4.00,3){};
        \draw[Edge](r)--(2);
    \end{tikzpicture}}.
\end{equation}
The cardinality of $\SetP$ is
\begin{math}
    6 \times \Par{1 \times 3 + 1 \times 1 + 1 \times 2 + 2 \times 3}
    = 72.
\end{math}
\medbreak

%%%%%%%%%%%%%%%%%%%%%%%%%%%%%%%%%%%%%%%%%%%%%%%%%%%%%%%%%%%%%%%%%%%%%%%%
\subsubsection{Examples}
Let us consider some complete examples of systems.
\begin{enumerate}[fullwidth,label={\it $\bullet$ Example \arabic*.}]
\item Let the alphabet
\begin{math}
    \GeneratingSet := \GeneratingSet(2) := \Bra{\GenA_i : i \in \N}
\end{math}
and the set of patterns
\begin{equation}
    \SetP :=
    \Bra{\TreeLeft{\GenA_i}{\GenA_i} : i \in \N}.
\end{equation}
By Proposition~\ref{prop:system_avoiding_stringy}, we obtain the system
\begin{subequations}
\begin{equation}
    \FactorPrefixSeries\Par{\SetP, \emptyset} =
    \Leaf + \sum_{i \in \N} \GenA_i \Composition
    \Han{\FactorPrefixSeries\Par{\SetP, \CorollaTwo{\GenA_i}},
    \FactorPrefixSeries\Par{\SetP, \emptyset}},
\end{equation}
\begin{equation}
    \FactorPrefixSeries\Par{\SetP, \CorollaTwo{\GenA_i}} =
    \Leaf + \sum_{\substack{j \in \N \\ j \ne i}}
    \GenA_j \Composition
    \Han{\FactorPrefixSeries\Par{\SetP, \CorollaTwo{\GenA_j}},
    \FactorPrefixSeries\Par{\SetP, \emptyset}},
    \qquad i \in \N,
\end{equation}
\end{subequations}
for the $\GeneratingSet$-trees factor-avoiding $\SetP$. Observe that
we work here with an infinite alphabet and an infinite set of stringy
patterns. This system contains an infinite number of equations.
\medbreak

\item Let the alphabet
\begin{math}
    \GeneratingSet := \GeneratingSet(1) := \Bra{\GenA, \GenB}
\end{math}
and the set of patterns
\begin{equation}
    \SetP :=
    \Bra{
    \GenA \circ_1 \underbrace{\GenB \circ_1 \dots \circ_1 \GenB}
        _{\times k}
    \circ_1 \GenA
    : k \in \N}.
\end{equation}
By Proposition~\ref{prop:system_avoiding_stringy}, we obtain the system
\begin{subequations}
\begin{equation}
    \FactorPrefixSeries\Par{\SetP, \emptyset}
    = \Leaf
    + \GenA \Composition \Han{\FactorPrefixSeries\Par{\SetP, \SetQ}}
    + \GenB \Composition \Han{\FactorPrefixSeries\Par{\SetP,
        \emptyset}},
\end{equation}
\begin{equation}
    \FactorPrefixSeries\Par{\SetP, \SetQ}
    = \Leaf
    + \GenB
    \Composition \Han{\FactorPrefixSeries\Par{\SetP, \SetQ}},
\end{equation}
\end{subequations}
for the $\GeneratingSet$-trees factor-avoiding $\SetP$, where
\begin{equation}
    \SetQ := \partial_{\GenA, 1}(\SetP) =
    \Bra{
    \underbrace{\GenB \circ_1 \dots \circ_1 \GenB}_{\times k}
    \circ_1 \GenA
    : k \in \N}.
\end{equation}
Observe that even if $\SetP$ is an infinite set of stringy patterns,
this system contains a finite number of equations. By
Proposition~\ref{prop:system_enumeration_trees_avoiding}, we obtain the
system
\begin{subequations}
\begin{equation}
    \FactorPrefixEnumSeries\Par{\SetP, \emptyset} =
    t
    + q q_\GenA \FactorPrefixEnumSeries\Par{\SetP, \SetQ}
    + q q_\GenB \FactorPrefixEnumSeries\Par{\SetP, \emptyset},
\end{equation}
\begin{equation}
    \FactorPrefixEnumSeries\Par{\SetP, \SetQ} =
    t
    + q q_\GenB \FactorPrefixEnumSeries\Par{\SetP, \SetQ},
\end{equation}
\end{subequations}
for the enumerative image of the characteristic series of the
$\GeneratingSet$-trees factor-avoiding~$\SetP$.
\medbreak

\item Let the alphabet
\begin{math}
    \GeneratingSet := \GeneratingSet(2) := \Bra{\GenA_i : i \in \Z}
\end{math}
and the set of patterns
\begin{equation}
    \SetP :=
    \Bra{\TreeLeft{\GenA_i}{\GenA_j} : i, j \in \Z, j \leq i}
    \cup
    \Bra{\TreeRight{\GenA_i}{\GenA_j} : i, j \in \Z, j \leq i}.
\end{equation}
By a direct inspection of $\SetP$, there is a one-to-one
correspondence between the set of the trees factor-avoiding $\SetP$ and
the set of \Def{increasing binary trees}, which are binary trees where
internal nodes are labeled on $\Z$ in such a way that the label of any
node is smaller than the ones of its children. By
Proposition~\ref{prop:system_avoiding_stringy}, we obtain the system
\begin{subequations}
\begin{equation}
    \FactorPrefixSeries\Par{\SetP, \emptyset}
    = \Leaf
    + \sum_{i \in \Z}
    \GenA_i
    \Composition \Han{\FactorPrefixSeries\Par{\SetP, \SetQ^{(i)}},
    \FactorPrefixSeries\Par{\SetP, \SetQ^{(i)}}},
\end{equation}
\begin{equation}
    \FactorPrefixSeries\Par{\SetP, \SetQ^{(i)}}
    = \Leaf
    + \sum_{\substack{j \in \Z \\ j \geq i + 1}}
    \GenA_j
    \Composition \Han{\FactorPrefixSeries\Par{\SetP, \SetQ^{(j)}},
    \FactorPrefixSeries\Par{\SetP, \SetQ^{(j)}}},
    \qquad i \in \Z,
\end{equation}
\end{subequations}
for the $\GeneratingSet$-trees factor-avoiding $\SetP$, where for any
$j \in \Z$,
\begin{equation}
    \SetQ^{(j)} := \Bra{\CorollaTwo{\GenA_i} : i \in \Z, i \leq j}.
\end{equation}
Observe that we work here with an infinite alphabet and an infinite set
of stringy patterns. This system contains an infinite number of
equations.
\medbreak

\item Let the alphabet
\begin{math}
    \GeneratingSet := \GeneratingSet(2) := \{\GenA\}
\end{math}
and the set of patterns
\begin{equation}
    \SetP :=
    \begin{tikzpicture}[Centering,xscale=0.22,yscale=0.22]
        \node(0)at(0.00,-5.25){};
        \node(2)at(2.00,-5.25){};
        \node(4)at(4.00,-3.50){};
        \node(6)at(6.00,-1.75){};
        \node[NodeST](1)at(1.00,-3.50){$\GenA$};
        \node[NodeST](3)at(3.00,-1.75){$\GenA$};
        \node[NodeST](5)at(5.00,0.00){$\GenA$};
        \draw[Edge](0)--(1);
        \draw[Edge](1)--(3);
        \draw[Edge](2)--(1);
        \draw[Edge](3)--(5);
        \draw[Edge](4)--(3);
        \draw[Edge](6)--(5);
        \node(r)at(5.00,1.75){};
        \draw[Edge](r)--(5);
    \end{tikzpicture}
    \begin{tikzpicture}[Centering,xscale=0.2,yscale=0.2]
        \node(0)at(0.00,-1.80){};
        \node(2)at(2.00,-3.60){};
        \node(4)at(4.00,-7.20){};
        \node(6)at(6.00,-7.20){};
        \node(8)at(8.00,-5.40){};
        \node[NodeST](1)at(1.00,0.00){$\GenA$};
        \node[NodeST](3)at(3.00,-1.80){$\GenA$};
        \node[NodeST](5)at(5.00,-5.40){$\GenA$};
        \node[NodeST](7)at(7.00,-3.60){$\GenA$};
        \draw[Edge](0)--(1);
        \draw[Edge](2)--(3);
        \draw[Edge](3)--(1);
        \draw[Edge](4)--(5);
        \draw[Edge](5)--(7);
        \draw[Edge](6)--(5);
        \draw[Edge](7)--(3);
        \draw[Edge](8)--(7);
        \node(r)at(1.00,1.5){};
        \draw[Edge](r)--(1);
    \end{tikzpicture}.
\end{equation}
By Proposition~\ref{prop:system_avoiding_stringy}, we obtain the system
\begin{footnotesize}
\begin{subequations}
\begin{equation}
    \FactorPrefixSeries\Par{\SetP, \emptyset}
    = \Leaf
    + \GenA
    \Composition \Han{
    \FactorPrefixSeries\Par{\SetP,
    \begin{tikzpicture}[Centering,xscale=0.22,yscale=0.22]
        \node(0)at(0.00,-3.33){};
        \node(2)at(2.00,-3.33){};
        \node(4)at(4.00,-1.67){};
        \node[NodeST](1)at(1.00,-1.67){$\GenA$};
        \node[NodeST](3)at(3.00,0.00){$\GenA$};
        \draw[Edge](0)--(1);
        \draw[Edge](1)--(3);
        \draw[Edge](2)--(1);
        \draw[Edge](4)--(3);
        \node(r)at(3.00,1.75){};
        \draw[Edge](r)--(3);
    \end{tikzpicture}},
    \FactorPrefixSeries\Par{\SetP,
    \begin{tikzpicture}[Centering,xscale=0.17,yscale=0.17]
        \node(0)at(0.00,-1.75){};
        \node(2)at(2.00,-5.25){};
        \node(4)at(4.00,-5.25){};
        \node(6)at(6.00,-3.50){};
        \node[NodeST](1)at(1.00,0.00){$\GenA$};
        \node[NodeST](3)at(3.00,-3.50){$\GenA$};
        \node[NodeST](5)at(5.00,-1.75){$\GenA$};
        \draw[Edge](0)--(1);
        \draw[Edge](2)--(3);
        \draw[Edge](3)--(5);
        \draw[Edge](4)--(3);
        \draw[Edge](5)--(1);
        \draw[Edge](6)--(5);
        \node(r)at(1.00,2.25){};
        \draw[Edge](r)--(1);
    \end{tikzpicture}
    }},
\end{equation}
\begin{equation}
    \FactorPrefixSeries\Par{\SetP,
    \begin{tikzpicture}[Centering,xscale=0.22,yscale=0.22]
        \node(0)at(0.00,-3.33){};
        \node(2)at(2.00,-3.33){};
        \node(4)at(4.00,-1.67){};
        \node[NodeST](1)at(1.00,-1.67){$\GenA$};
        \node[NodeST](3)at(3.00,0.00){$\GenA$};
        \draw[Edge](0)--(1);
        \draw[Edge](1)--(3);
        \draw[Edge](2)--(1);
        \draw[Edge](4)--(3);
        \node(r)at(3.00,1.75){};
        \draw[Edge](r)--(3);
    \end{tikzpicture}}
    = \Leaf
    + \GenA
    \Composition \Han{
    \FactorPrefixSeries\Par{\SetP,
    \CorollaTwo{\GenA}
    \begin{tikzpicture}[Centering,xscale=0.22,yscale=0.22]
        \node(0)at(0.00,-3.33){};
        \node(2)at(2.00,-3.33){};
        \node(4)at(4.00,-1.67){};
        \node[NodeST](1)at(1.00,-1.67){$\GenA$};
        \node[NodeST](3)at(3.00,0.00){$\GenA$};
        \draw[Edge](0)--(1);
        \draw[Edge](1)--(3);
        \draw[Edge](2)--(1);
        \draw[Edge](4)--(3);
        \node(r)at(3.00,1.75){};
        \draw[Edge](r)--(3);
    \end{tikzpicture}},
    \FactorPrefixSeries\Par{\SetP,
    \begin{tikzpicture}[Centering,xscale=0.17,yscale=0.17]
        \node(0)at(0.00,-1.75){};
        \node(2)at(2.00,-5.25){};
        \node(4)at(4.00,-5.25){};
        \node(6)at(6.00,-3.50){};
        \node[NodeST](1)at(1.00,0.00){$\GenA$};
        \node[NodeST](3)at(3.00,-3.50){$\GenA$};
        \node[NodeST](5)at(5.00,-1.75){$\GenA$};
        \draw[Edge](0)--(1);
        \draw[Edge](2)--(3);
        \draw[Edge](3)--(5);
        \draw[Edge](4)--(3);
        \draw[Edge](5)--(1);
        \draw[Edge](6)--(5);
        \node(r)at(1.00,2.25){};
        \draw[Edge](r)--(1);
    \end{tikzpicture}}},
\end{equation}
\begin{equation}
    \FactorPrefixSeries\Par{\SetP,
    \begin{tikzpicture}[Centering,xscale=0.17,yscale=0.17]
        \node(0)at(0.00,-1.75){};
        \node(2)at(2.00,-5.25){};
        \node(4)at(4.00,-5.25){};
        \node(6)at(6.00,-3.50){};
        \node[NodeST](1)at(1.00,0.00){$\GenA$};
        \node[NodeST](3)at(3.00,-3.50){$\GenA$};
        \node[NodeST](5)at(5.00,-1.75){$\GenA$};
        \draw[Edge](0)--(1);
        \draw[Edge](2)--(3);
        \draw[Edge](3)--(5);
        \draw[Edge](4)--(3);
        \draw[Edge](5)--(1);
        \draw[Edge](6)--(5);
        \node(r)at(1.00,2.25){};
        \draw[Edge](r)--(1);
    \end{tikzpicture}}
    = \Leaf
    + \GenA
    \Composition \Han{
    \FactorPrefixSeries\Par{\SetP,
    \begin{tikzpicture}[Centering,xscale=0.22,yscale=0.22]
        \node(0)at(0.00,-3.33){};
        \node(2)at(2.00,-3.33){};
        \node(4)at(4.00,-1.67){};
        \node[NodeST](1)at(1.00,-1.67){$\GenA$};
        \node[NodeST](3)at(3.00,0.00){$\GenA$};
        \draw[Edge](0)--(1);
        \draw[Edge](1)--(3);
        \draw[Edge](2)--(1);
        \draw[Edge](4)--(3);
        \node(r)at(3.00,1.75){};
        \draw[Edge](r)--(3);
    \end{tikzpicture}},
    \FactorPrefixSeries\Par{\SetP,
    \begin{tikzpicture}[Centering,xscale=0.22,yscale=0.22]
        \node(0)at(0.00,-3.33){};
        \node(2)at(2.00,-3.33){};
        \node(4)at(4.00,-1.67){};
        \node[NodeST](1)at(1.00,-1.67){$\GenA$};
        \node[NodeST](3)at(3.00,0.00){$\GenA$};
        \draw[Edge](0)--(1);
        \draw[Edge](1)--(3);
        \draw[Edge](2)--(1);
        \draw[Edge](4)--(3);
        \node(r)at(3.00,1.75){};
        \draw[Edge](r)--(3);
    \end{tikzpicture}
    \begin{tikzpicture}[Centering,xscale=0.17,yscale=0.17]
        \node(0)at(0.00,-1.75){};
        \node(2)at(2.00,-5.25){};
        \node(4)at(4.00,-5.25){};
        \node(6)at(6.00,-3.50){};
        \node[NodeST](1)at(1.00,0.00){$\GenA$};
        \node[NodeST](3)at(3.00,-3.50){$\GenA$};
        \node[NodeST](5)at(5.00,-1.75){$\GenA$};
        \draw[Edge](0)--(1);
        \draw[Edge](2)--(3);
        \draw[Edge](3)--(5);
        \draw[Edge](4)--(3);
        \draw[Edge](5)--(1);
        \draw[Edge](6)--(5);
        \node(r)at(1.00,2.25){};
        \draw[Edge](r)--(1);
    \end{tikzpicture}}},
\end{equation}
\begin{equation}
    \FactorPrefixSeries\Par{\SetP,
    \CorollaTwo{\GenA}
    \begin{tikzpicture}[Centering,xscale=0.22,yscale=0.22]
        \node(0)at(0.00,-3.33){};
        \node(2)at(2.00,-3.33){};
        \node(4)at(4.00,-1.67){};
        \node[NodeST](1)at(1.00,-1.67){$\GenA$};
        \node[NodeST](3)at(3.00,0.00){$\GenA$};
        \draw[Edge](0)--(1);
        \draw[Edge](1)--(3);
        \draw[Edge](2)--(1);
        \draw[Edge](4)--(3);
        \node(r)at(3.00,1.75){};
        \draw[Edge](r)--(3);
    \end{tikzpicture}}
    = \Leaf,
\end{equation}
\begin{equation}
    \FactorPrefixSeries\Par{\SetP,
    \begin{tikzpicture}[Centering,xscale=0.22,yscale=0.22]
        \node(0)at(0.00,-3.33){};
        \node(2)at(2.00,-3.33){};
        \node(4)at(4.00,-1.67){};
        \node[NodeST](1)at(1.00,-1.67){$\GenA$};
        \node[NodeST](3)at(3.00,0.00){$\GenA$};
        \draw[Edge](0)--(1);
        \draw[Edge](1)--(3);
        \draw[Edge](2)--(1);
        \draw[Edge](4)--(3);
        \node(r)at(3.00,1.75){};
        \draw[Edge](r)--(3);
    \end{tikzpicture}
    \begin{tikzpicture}[Centering,xscale=0.17,yscale=0.17]
        \node(0)at(0.00,-1.75){};
        \node(2)at(2.00,-5.25){};
        \node(4)at(4.00,-5.25){};
        \node(6)at(6.00,-3.50){};
        \node[NodeST](1)at(1.00,0.00){$\GenA$};
        \node[NodeST](3)at(3.00,-3.50){$\GenA$};
        \node[NodeST](5)at(5.00,-1.75){$\GenA$};
        \draw[Edge](0)--(1);
        \draw[Edge](2)--(3);
        \draw[Edge](3)--(5);
        \draw[Edge](4)--(3);
        \draw[Edge](5)--(1);
        \draw[Edge](6)--(5);
        \node(r)at(1.00,2.25){};
        \draw[Edge](r)--(1);
    \end{tikzpicture}}
    = \Leaf
    + \GenA
    \Composition \Han{
    \FactorPrefixSeries\Par{\SetP,
    \CorollaTwo{\GenA}
    \begin{tikzpicture}[Centering,xscale=0.22,yscale=0.22]
        \node(0)at(0.00,-3.33){};
        \node(2)at(2.00,-3.33){};
        \node(4)at(4.00,-1.67){};
        \node[NodeST](1)at(1.00,-1.67){$\GenA$};
        \node[NodeST](3)at(3.00,0.00){$\GenA$};
        \draw[Edge](0)--(1);
        \draw[Edge](1)--(3);
        \draw[Edge](2)--(1);
        \draw[Edge](4)--(3);
        \node(r)at(3.00,1.75){};
        \draw[Edge](r)--(3);
    \end{tikzpicture}},
    \FactorPrefixSeries\Par{\SetP,
    \begin{tikzpicture}[Centering,xscale=0.22,yscale=0.22]
        \node(0)at(0.00,-3.33){};
        \node(2)at(2.00,-3.33){};
        \node(4)at(4.00,-1.67){};
        \node[NodeST](1)at(1.00,-1.67){$\GenA$};
        \node[NodeST](3)at(3.00,0.00){$\GenA$};
        \draw[Edge](0)--(1);
        \draw[Edge](1)--(3);
        \draw[Edge](2)--(1);
        \draw[Edge](4)--(3);
        \node(r)at(3.00,1.75){};
        \draw[Edge](r)--(3);
    \end{tikzpicture}
    \begin{tikzpicture}[Centering,xscale=0.17,yscale=0.17]
        \node(0)at(0.00,-1.75){};
        \node(2)at(2.00,-5.25){};
        \node(4)at(4.00,-5.25){};
        \node(6)at(6.00,-3.50){};
        \node[NodeST](1)at(1.00,0.00){$\GenA$};
        \node[NodeST](3)at(3.00,-3.50){$\GenA$};
        \node[NodeST](5)at(5.00,-1.75){$\GenA$};
        \draw[Edge](0)--(1);
        \draw[Edge](2)--(3);
        \draw[Edge](3)--(5);
        \draw[Edge](4)--(3);
        \draw[Edge](5)--(1);
        \draw[Edge](6)--(5);
        \node(r)at(1.00,2.25){};
        \draw[Edge](r)--(1);
    \end{tikzpicture}}},
\end{equation}
\end{subequations}
\end{footnotesize}%
for the $\GeneratingSet$-trees factor-avoiding $\SetP$. Observe that we
work here with a finite alphabet and a finite set of stringy patterns.
The set of patterns considered here comes from an example appearing
in~\cite{KP15}. Our system shown here is different from the ones
presented in this cited work.
\medbreak

\item Let the alphabet
\begin{math}
    \GeneratingSet := \GeneratingSet(2) := \Bra{\GenA_1, \GenA_2}
\end{math}
and the set of patterns
\begin{equation}
    \SetP :=
    \begin{tikzpicture}[Centering,xscale=0.22,yscale=0.19]
        \node(0)at(0.00,-4.67){};
        \node(2)at(2.00,-4.67){};
        \node(4)at(4.00,-4.67){};
        \node(6)at(6.00,-4.67){};
        \node[NodeST](1)at(1.00,-2.33){$\GenA_1$};
        \node[NodeST](3)at(3.00,0.00){$\GenA_2$};
        \node[NodeST](5)at(5.00,-2.33){$\GenA_1$};
        \draw[Edge](0)--(1);
        \draw[Edge](1)--(3);
        \draw[Edge](2)--(1);
        \draw[Edge](4)--(5);
        \draw[Edge](5)--(3);
        \draw[Edge](6)--(5);
        \node(r)at(3.00,2){};
        \draw[Edge](r)--(3);
    \end{tikzpicture}
    \begin{tikzpicture}[Centering,xscale=0.22,yscale=0.19]
        \node(0)at(0.00,-4.67){};
        \node(2)at(2.00,-4.67){};
        \node(4)at(4.00,-4.67){};
        \node(6)at(6.00,-4.67){};
        \node[NodeST](1)at(1.00,-2.33){$\GenA_1$};
        \node[NodeST](3)at(3.00,0.00){$\GenA_2$};
        \node[NodeST](5)at(5.00,-2.33){$\GenA_2$};
        \draw[Edge](0)--(1);
        \draw[Edge](1)--(3);
        \draw[Edge](2)--(1);
        \draw[Edge](4)--(5);
        \draw[Edge](5)--(3);
        \draw[Edge](6)--(5);
        \node(r)at(3.00,2){};
        \draw[Edge](r)--(3);
    \end{tikzpicture}
    \begin{tikzpicture}[Centering,xscale=0.22,yscale=0.19]
        \node(0)at(0.00,-4.67){};
        \node(2)at(2.00,-4.67){};
        \node(4)at(4.00,-4.67){};
        \node(6)at(6.00,-4.67){};
        \node[NodeST](1)at(1.00,-2.33){$\GenA_2$};
        \node[NodeST](3)at(3.00,0.00){$\GenA_2$};
        \node[NodeST](5)at(5.00,-2.33){$\GenA_1$};
        \draw[Edge](0)--(1);
        \draw[Edge](1)--(3);
        \draw[Edge](2)--(1);
        \draw[Edge](4)--(5);
        \draw[Edge](5)--(3);
        \draw[Edge](6)--(5);
        \node(r)at(3.00,2){};
        \draw[Edge](r)--(3);
    \end{tikzpicture}
    \begin{tikzpicture}[Centering,xscale=0.22,yscale=0.19]
        \node(0)at(0.00,-4.67){};
        \node(2)at(2.00,-4.67){};
        \node(4)at(4.00,-4.67){};
        \node(6)at(6.00,-4.67){};
        \node[NodeST](1)at(1.00,-2.33){$\GenA_2$};
        \node[NodeST](3)at(3.00,0.00){$\GenA_2$};
        \node[NodeST](5)at(5.00,-2.33){$\GenA_2$};
        \draw[Edge](0)--(1);
        \draw[Edge](1)--(3);
        \draw[Edge](2)--(1);
        \draw[Edge](4)--(5);
        \draw[Edge](5)--(3);
        \draw[Edge](6)--(5);
        \node(r)at(3.00,2){};
        \draw[Edge](r)--(3);
    \end{tikzpicture}.
\end{equation}
A direct inspection of $\SetP$ shows that a $\GeneratingSet$-tree
factor-avoids $\SetP$ if and only if any internal node labeled by
$\GenA_2$ have at least one leaf as a child. By
Theorem~\ref{thm:system_trees_avoiding}, we obtain the system
\begin{subequations}
\begin{multline}
    \FactorPrefixSeries\Par{\SetP, \emptyset}
    = \Leaf
    + \GenA_1 \Composition
    \Han{\FactorPrefixSeries\Par{\SetP, \emptyset},
    \FactorPrefixSeries\Par{\SetP, \emptyset}}
    + \GenA_2 \Composition
    \Han{
    \FactorPrefixSeries\Par{\SetP, \emptyset},
    \FactorPrefixSeries\Par{\SetP,
        \CorollaTwo{\GenA_1} \CorollaTwo{\GenA_2}}} \\
    + \GenA_2 \Composition
    \Han{
    \FactorPrefixSeries\Par{\SetP,
        \CorollaTwo{\GenA_1} \CorollaTwo{\GenA_2}},
    \FactorPrefixSeries\Par{\SetP, \emptyset}}
    -  \GenA_2 \Composition
    \Han{
    \FactorPrefixSeries\Par{\SetP,
        \CorollaTwo{\GenA_1} \CorollaTwo{\GenA_2}},
    \FactorPrefixSeries\Par{\SetP,
        \CorollaTwo{\GenA_1} \CorollaTwo{\GenA_2}}},
\end{multline}
\begin{equation}
    \FactorPrefixSeries\Par{\SetP,
        \CorollaTwo{\GenA_1} \CorollaTwo{\GenA_2}}
    =
    \Leaf,
\end{equation}
\end{subequations}
for the $\GeneratingSet$-trees factor-avoiding $\SetP$. We work here
with a finite alphabet and a finite set of non-stringy patterns.
\end{enumerate}
\medbreak

%%%%%%%%%%%%%%%%%%%%%%%%%%%%%%%%%%%%%%%%%%%%%%%%%%%%%%%%%%%%%%%%%%%%%%%%
%%%%%%%%%%%%%%%%%%%%%%%%%%%%%%%%%%%%%%%%%%%%%%%%%%%%%%%%%%%%%%%%%%%%%%%%
%%%%%%%%%%%%%%%%%%%%%%%%%%%%%%%%%%%%%%%%%%%%%%%%%%%%%%%%%%%%%%%%%%%%%%%%
\section{Operads, enumeration, and statistics}
\label{sec:operads_for_enumeration}
This section is devoted to using operads as tools to enumerate families of
combinatorial objects, jointly with the results presented in the previous
sections enumerating trees factor-avoiding some patterns.
\medbreak

%%%%%%%%%%%%%%%%%%%%%%%%%%%%%%%%%%%%%%%%%%%%%%%%%%%%%%%%%%%%%%%%%%%%%%%%
%%%%%%%%%%%%%%%%%%%%%%%%%%%%%%%%%%%%%%%%%%%%%%%%%%%%%%%%%%%%%%%%%%%%%%%%
\subsection{Nonsymmetric set-operads}
We recall here the elementary notions about operads employed thereafter.
They mainly come from~\cite{Gir18}.
\medbreak

%%%%%%%%%%%%%%%%%%%%%%%%%%%%%%%%%%%%%%%%%%%%%%%%%%%%%%%%%%%%%%%%%%%%%%%%
\subsubsection{Operad axioms}
A \Def{nonsymmetric operad in the category of sets}, or a
\Def{nonsymmetric operad} for short, is a graded set $\Operad$ together
with maps
\begin{equation}
    \circ_i : \Operad(n) \times \Operad(m) \to \Operad(n + m - 1),
    \qquad 1 \leq i \leq n, 1 \leq m,
\end{equation}
called \Def{partial compositions}, and a distinguished element
$\Unit \in \Operad(1)$, the \Def{unit} of $\Operad$. This data has to
satisfy, for any $x, y, z \in \Operad$, the three relations
\begin{subequations}
\begin{equation} \label{equ:operad_axiom_1}
    \Par{x \circ_i y} \circ_{i + j - 1} z = x \circ_i \Par{y \circ_j z},
    \qquad
    1 \leq i \leq |x|, 1 \leq j \leq |y|,
\end{equation}
\begin{equation} \label{equ:operad_axiom_2}
    \Par{x \circ_i y} \circ_{j + |y| - 1} z
    =
    \Par{x \circ_j z} \circ_i y,
    \qquad
    1 \leq i < j \leq |x|,
\end{equation}
\begin{equation} \label{equ:operad_axiom_3}
    \Unit \circ_1 x = x = x \circ_i \Unit,
    \qquad 1 \leq i \leq |x|.
\end{equation}
\end{subequations}
Since we consider in this work only nonsymmetric operads, we shall call
these simply \Def{operads}.
\medbreak

%%%%%%%%%%%%%%%%%%%%%%%%%%%%%%%%%%%%%%%%%%%%%%%%%%%%%%%%%%%%%%%%%%%%%%%%
\subsubsection{Elementary definitions}
Given an operad $\Operad$, one defines the \Def{full composition} maps
of $\Operad$ as the maps
\begin{equation}
    \circ :
    \Operad(n) \times
    \Operad\Par{m_1} \times \dots \times \Operad\Par{m_n}
    \to \Operad\Par{m_1 + \dots + m_n},
    \qquad
    1 \leq n, 1 \leq m_1, \dots, 1 \leq m_n,
\end{equation}
defined, for any $x \in \Operad(n)$ and $y_1, \dots, y_n \in \Operad$,
by
\begin{equation}
    x \circ \Han{y_1, \dots, y_n}
    := \Par{\dots \Par{\Par{x \circ_n y_n} \circ_{n - 1}
        y_{n - 1}} \dots} \circ_1 y_1.
\end{equation}
\medbreak

When $\Operad$ is combinatorial as a graded set, $\Operad$ is
\Def{combinatorial}. In this case, the \Def{Hilbert series}
$\HilbertSeries_{\Operad}(t)$ of $\Operad$ is the generating series
$\GeneratingSeries_{\Operad}(t)$. If $\Operad_1$ and $\Operad_2$ are two
operads, a map $\phi : \Operad_1 \to \Operad_2$ is an
\Def{operad morphism} if it respects arities, sends the unit of
$\Operad_1$ to the unit of $\Operad_2$, and commutes with partial
composition maps. We say that $\Operad_2$ is a \Def{suboperad} of
$\Operad_1$ if $\Operad_2$ is a graded subset of $\Operad_1$,
$\Operad_1$ and $\Operad_2$ have the same unit, and the partial
compositions of $\Operad_2$ are the ones of $\Operad_1$ restricted on
$\Operad_2$. For any subset $\GeneratingSet$ of $\Operad$, the
\Def{operad generated} by $\GeneratingSet$ is the smallest suboperad
$\Operad^{\GeneratingSet}$ of $\Operad$ containing $\GeneratingSet$.
When $\Operad^{\GeneratingSet} = \Operad$ and $\GeneratingSet$ is
minimal with respect to the inclusion among the subsets of
$\GeneratingSet$ satisfying this property, $\GeneratingSet$ is a
\Def{minimal generating set} of $\Operad$ and its elements are
\Def{generators} of $\Operad$. An \Def{operad congruence} of $\Operad$
is an equivalence relation $\Congr$ respecting the arities and such
that, for any $x, y, x', y' \in \Operad$, $x \Congr x'$ and
$y \Congr y'$ implies $x \circ_i y \Congr x' \circ_i y'$ for any
$i \in [|x|]$. The $\Congr$-equivalence class of any $x \in \Operad$ is
denoted by $[x]_{\Congr}$. Given an operad congruence $\Congr$, the
\Def{quotient operad} $\Operad/_{\Congr}$ is the operad on the set of
all $\Congr$-equivalence classes and defined in the usual way.
\medbreak

%%%%%%%%%%%%%%%%%%%%%%%%%%%%%%%%%%%%%%%%%%%%%%%%%%%%%%%%%%%%%%%%%%%%%%%%
%%%%%%%%%%%%%%%%%%%%%%%%%%%%%%%%%%%%%%%%%%%%%%%%%%%%%%%%%%%%%%%%%%%%%%%%
\subsection{Presentations, rewrite relations, and bases}
We recall the notion of presentation by generators and relations of an
operad. By using rewrite systems on syntax trees, this leads to the
notion of bases of an operad. This notion is crucial to see the elements
of an operad satisfying some conditions as syntax trees factor-avoiding
some patterns.
\medbreak

%%%%%%%%%%%%%%%%%%%%%%%%%%%%%%%%%%%%%%%%%%%%%%%%%%%%%%%%%%%%%%%%%%%%%%%%
\subsubsection{Free operads and presentations}
For any graded set $\GeneratingSet$, the \Def{free operad} on
$\GeneratingSet$ is the operad $\FreeOperad(\GeneratingSet)$ wherein for
any $n \geq 1$, $\FreeOperad(\GeneratingSet)(n)$ is the set
$\SyntaxTrees(\GeneratingSet)(n)$ of all $\GeneratingSet$-trees of arity
$n$. The partial compositions $\circ_i$ of $\FreeOperad(\GeneratingSet)$
are the partial compositions of $\GeneratingSet$-trees (see
Section~\ref{subsubsec:partial_composition_trees}). A \Def{presentation}
of an operad $\Operad$ is a pair $(\GeneratingSet, \Congr)$ such that
$\GeneratingSet$ is a graded set, $\Congr$ is an operad congruence of
$\FreeOperad(\GeneratingSet)$, and $\Operad$ is isomorphic to
$\FreeOperad(\GeneratingSet)/_{\Congr}$. Let us also define the
\Def{evaluation map} $\Eval : \FreeOperad(\GeneratingSet) \to \Operad$
as the unique surjective operad morphism satisfying, for any
$\GenA \in \GeneratingSet$, $\Eval(\Corolla{\GenA}) = \GenA$. A
\Def{treelike expression} on $\GeneratingSet$ of an element $x$ of
$\Operad$ is a $\GeneratingSet$-tree of the fiber $\Eval^{-1}(x)$.
\medbreak

%%%%%%%%%%%%%%%%%%%%%%%%%%%%%%%%%%%%%%%%%%%%%%%%%%%%%%%%%%%%%%%%%%%%%%%%
\subsubsection{Rewrite rules on trees and pattern avoidance}
We explain here and in the next section a useful link for our purposes
between presentations of operads and pattern avoidance in syntax trees.
This link passes by rewrite rules on syntax trees. Notations and notions
about general rewrite rules used here can be found in~\cite{BN98}.
\medbreak

A \Def{rewrite rule} on $\GeneratingSet$-trees is an ordered pair
$\Par{\TreeS, \TreeS'}$ of $\GeneratingSet$-trees such that
$|\TreeS| = |\TreeS'|$. A set of rewrite rules defines a binary relation
$\Rew$ on $\FreeOperad(\GeneratingSet)$ for which we denote by
$\TreeS \Rew \TreeS'$ the fact that~$\Par{\TreeS, \TreeS'} \in \Rew$.
For any set $\Rew$ of rewrite rules, we denote by $\RewContext$ the
\Def{rewrite relation induced} by $\Rew$ as the binary relation
satisfying
\begin{equation} \label{equ:rewrite_relation_induced}
    \TreeR \circ_i \Par{\TreeS \circ
        \Han{\TreeR_1, \dots, \TreeR_{|\TreeS|}}}
    \RewContext
    \TreeR \circ_i \Par{\TreeS' \circ
        \Han{\TreeR_1, \dots, \TreeR_{|\TreeS|}}},
\end{equation}
if $\TreeS \Rew \TreeS'$ where and $\TreeR$, $\TreeR_1$, \dots,
$\TreeR_{|\TreeS|}$ are $\GeneratingSet$-trees, and $i \in [|\TreeR|]$.
In other words, one has $\TreeT \RewContext \TreeT'$ if it is possible
to obtain $\TreeT'$ from $\TreeT$ by replacing a factor $\TreeS$ of
$\TreeT$ by $\TreeS'$ whenever $\TreeS \Rew \TreeS'$. Let also
$\RewContextRT$ be the reflexive and transitive closure of
$\RewContext$. If $\TreeT$ and $\TreeT'$ are two $\GeneratingSet$-trees
such that $\TreeT \RewContextRT \TreeT'$, then $\TreeT$ is
\Def{rewritable} into $\TreeT'$. If $\TreeT$ is a $\GeneratingSet$-tree
such that, for any $\GeneratingSet$-tree $\TreeT'$,
$\TreeT \RewContextRT \TreeT'$ implies $\TreeT = \TreeT'$, then
$\TreeT'$ is a \Def{normal form} for $\RewContext$. The set of all
normal forms for $\RewContext$ is denoted by
$\NormalForms_{\RewContext}$. If there is not infinite chain
\begin{math}
    \TreeT_0 \RewContext \TreeT_1 \RewContext \TreeT_2
    \RewContext \cdots,
\end{math}
then $\RewContext$ is \Def{terminating}. Finally, if for all
$\GeneratingSet$-trees $\TreeT$, $\TreeS_1$, and $\TreeS_2$ such that
$\TreeT \RewContextRT \TreeS_1$ and $\TreeT \RewContextRT \TreeS_2$,
there exists a $\GeneratingSet$-tree $\TreeT'$ such that
$\TreeS_1 \RewContextRT \TreeT'$ and $\TreeS_2 \RewContextRT \TreeT'$,
then $\RewContext$ is \Def{confluent}.
\medbreak

Let us denote by $\SetP_{\Rew}$ the set of the $\GeneratingSet$-trees
appearing as left members of~$\Rew$.
\medbreak

\begin{Lemma} \label{lem:normal_forms_avoiding}
    If $\Rew$ is a set of rewrite rules on $\GeneratingSet$-trees,
    then $\NormalForms_{\RewContext}$ is the set of all the
    $\GeneratingSet$-trees factor-avoiding $\SetP_{\Rew}$.
\end{Lemma}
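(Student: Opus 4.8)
The plan is to unwind the definitions: I will show that being a normal form for $\RewContext$ is the same as admitting no one-step rewrite, and that this is in turn exactly factor-avoidance of $\SetP_{\Rew}$. Both equivalences are direct comparisons of the relevant defining decompositions.

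First I would establish the following reformulation of normal forms: a $\GeneratingSet$-tree $\TreeT$ lies in $\NormalForms_{\RewContext}$ if and only if there is no $\GeneratingSet$-tree $\TreeT'$ with $\TreeT \RewContext \TreeT'$. Indeed, if no $\RewContext$-step issues from $\TreeT$, then, since $\RewContextRT$ is the reflexive and transitive closure of $\RewContext$, the only pair starting at $\TreeT$ in $\RewContextRT$ is $(\TreeT, \TreeT)$, so $\TreeT \RewContextRT \TreeT'$ forces $\TreeT = \TreeT'$ and $\TreeT$ is a normal form. Conversely, a one-step rewrite $\TreeT \RewContext \TreeT'$ replaces a factor $\TreeS$ of $\TreeT$ by a different tree $\TreeS'$ of the same arity, hence genuinely modifies the tree, giving $\TreeT' \ne \TreeT$ and witnessing that $\TreeT$ is not a normal form.

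Next I would translate the absence of a one-step rewrite into factor-avoidance. By the definition~\eqref{equ:rewrite_relation_induced} of $\RewContext$, there exists $\TreeT'$ with $\TreeT \RewContext \TreeT'$ if and only if there are a rule $\TreeS \Rew \TreeS'$ and $\GeneratingSet$-trees $\TreeR$, $\TreeR_1, \dots, \TreeR_{|\TreeS|}$ together with an index $i \in [|\TreeR|]$ such that $\TreeT = \TreeR \circ_i \Par{\TreeS \circ \Han{\TreeR_1, \dots, \TreeR_{|\TreeS|}}}$. Comparing this with the defining decomposition~\eqref{equ:decomposition_tree} of the factor relation, such data exist precisely when some left member $\TreeS$ of a rule of $\Rew$, that is, some element of $\SetP_{\Rew}$, satisfies $\TreeS \Factor \TreeT$; equivalently, $\TreeT$ does not factor-avoid $\SetP_{\Rew}$.

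Combining the two observations yields that $\TreeT \in \NormalForms_{\RewContext}$ if and only if $\SetP_{\Rew} \NotFactor \TreeT$, which is the statement of the lemma. The only point requiring a little care is the first equivalence, namely reconciling the definition of normal form (phrased through $\RewContextRT$) with the absence of a single $\RewContext$-step; this relies on a rewrite step always changing the tree. Beyond that, the argument is just a side-by-side reading of the two defining decompositions, so I do not expect any real obstacle.
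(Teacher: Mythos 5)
Your proof is correct and follows essentially the same route as the paper: both arguments are a direct comparison of the defining decomposition~\eqref{equ:rewrite_relation_induced} of $\RewContext$ with the decomposition~\eqref{equ:decomposition_tree} of the factor relation. You are in fact slightly more careful than the paper in reconciling the definition of normal form (phrased via $\RewContextRT$) with the absence of a single $\RewContext$-step, though note that your claim that a rewrite step ``genuinely modifies the tree'' tacitly assumes $\TreeS \ne \TreeS'$ for every rule, an assumption the paper's definition of rewrite rule does not formally impose but which its own proof also uses implicitly.
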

\begin{proof}
    Assume first that $\TreeT$ is a $\GeneratingSet$-tree
    factor-avoiding $\SetP_{\Rew}$. Then, due to the
    definition~\eqref{equ:rewrite_relation_induced} of $\RewContext$,
    $\TreeT$ is not rewritable by $\RewContext$. Hence, $\TreeT$ is a
    normal form for $\RewContext$. Conversely, assume that
    $\TreeT \in \NormalForms_{\RewContext}$. In this case, by definition
    of a normal form, $\TreeT$ is not rewritable by $\RewContext$, so
    that $\TreeT$ does not admit any occurrence of a tree appearing as
    a left member of~$\Rew$.
\end{proof}
\medbreak

%%%%%%%%%%%%%%%%%%%%%%%%%%%%%%%%%%%%%%%%%%%%%%%%%%%%%%%%%%%%%%%%%%%%%%%%
\subsubsection{Orientations and bases}
Let $\Operad$ be an operad admitting a presentation
$(\GeneratingSet, \Congr)$. A set~$\Rew$ of rewrite rules is an
\Def{orientation} of $\Congr$ if the reflexive, symmetric, and
transitive closure of~$\RewContext$ is~$\Congr$. When $\RewContext$ is
terminating and confluent, the orientation $\Rew$ of $\Congr$ is
\Def{faithful}.
\medbreak

\begin{Lemma} \label{lem:convergent_orientation}
    Let $\Operad$ be an operad admitting a presentation
    $(\GeneratingSet, \Congr)$ and $\Rew$ be a faithful orientation of
    $\Congr$. For any $n \geq 1$, the restriction of the evaluation map
    $\Eval$ on $\NormalForms_{\RewContext}(n)$ is a bijection between
    this last set and~$\Operad(n)$.
\end{Lemma}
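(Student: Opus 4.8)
**

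The plan is to show that the restriction of $\Eval$ to $\NormalForms_{\RewContext}(n)$ is both surjective and injective onto $\Operad(n)$, exploiting that $\Rew$ is a faithful orientation, hence $\RewContext$ is terminating and confluent.

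First I would establish surjectivity. Let $x \in \Operad(n)$. Since $\Eval : \FreeOperad(\GeneratingSet) \to \Operad$ is a surjective operad morphism, there is a $\GeneratingSet$-tree $\TreeT$ of arity $n$ with $\Eval(\TreeT) = x$. Because $\RewContext$ is terminating, starting from $\TreeT$ and repeatedly applying rewrite steps must reach a normal form $\TreeT' \in \NormalForms_{\RewContext}(n)$ after finitely many steps, so $\TreeT \RewContextRT \TreeT'$. Rewriting preserves arity (each rule $(\TreeS,\TreeS')$ satisfies $|\TreeS| = |\TreeS'|$), so $\TreeT'$ has arity $n$. Since $\Rew$ is an orientation of $\Congr$, any single step $\TreeU \RewContext \TreeV$ entails $\TreeU \Congr \TreeV$; taking the reflexive-transitive closure, $\TreeT \Congr \TreeT'$, hence $\Eval(\TreeT') = \Eval(\TreeT) = x$. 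Thus every element of $\Operad(n)$ is hit by a normal form.

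Next I would establish injectivity, which is the main obstacle and where confluence is essential. Suppose $\TreeT_1, \TreeT_2 \in \NormalForms_{\RewContext}(n)$ with $\Eval(\TreeT_1) = \Eval(\TreeT_2)$. Then $\TreeT_1 \Congr \TreeT_2$. The relation $\Congr$ is, by definition of an orientation, the reflexive, symmetric, and transitive closure of $\RewContext$. I would invoke the standard fact from rewriting theory (Church--Rosser property, see~\cite{BN98}) that for a confluent rewrite relation, $\TreeT_1 \Congr \TreeT_2$ implies there exists a common reduct $\TreeT'$ with $\TreeT_1 \RewContextRT \TreeT'$ and $\TreeT_2 \RewContextRT \TreeT'$. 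But $\TreeT_1$ and $\TreeT_2$ are normal forms, so $\TreeT_1 \RewContextRT \TreeT'$ forces $\TreeT_1 = \TreeT'$ and likewise $\TreeT_2 = \TreeT'$; hence $\TreeT_1 = \TreeT_2$. This gives injectivity.

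Combining the two parts, $\Eval$ restricted to $\NormalForms_{\RewContext}(n)$ is a bijection onto $\Operad(n)$. I would note, if not already available in the excerpt, that Lemma~\ref{lem:normal_forms_avoiding} lets one describe $\NormalForms_{\RewContext}$ concretely as the trees factor-avoiding $\SetP_{\Rew}$, which is the link needed for later applications, but this identification is not strictly required for the bijection statement itself. The only subtlety to be careful about is the precise invocation of the Church--Rosser theorem for the termination-plus-confluence setting (a locally confluent terminating relation is confluent by Newman's lemma, and confluent relations are Church--Rosser); since $\Rew$ is assumed faithful, confluence is given outright, so I would cite the Church--Rosser equivalence directly from~\cite{BN98} rather than reprove it.
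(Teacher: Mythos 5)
Your proposal is correct and follows essentially the same route as the paper's proof: surjectivity via termination (rewrite a treelike expression of $x$ to a normal form in the same $\Congr$-class) and injectivity via confluence (a $\Congr$-equivalence class contains at most one normal form, which you justify slightly more explicitly through the Church--Rosser property).
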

\begin{proof}
    Let $x \in \Operad(n)$. Since $\GeneratingSet$ is a generating set
    of $\Operad$, $x$ admits a treelike expression $\TreeT$ on
    $\GeneratingSet$. Since $\RewContext$ is terminating, there is a
    $\GeneratingSet$-tree $\TreeT' \in [\TreeT]_{\Congr}$ such that
    $\TreeT'$ is a normal form for $\RewContext$. This implies
    $\Eval\Par{\TreeT'} = x$ and shows that $\Eval$ is surjective.
    \smallbreak

    Since $\Rew$ is an orientation of $\Congr$, if $\TreeT$ and
    $\TreeT'$ are two normal forms for $\RewContext$ of arity $n$ such
    that $\Eval(\TreeT) = \Eval\Par{\TreeT'}$, then
    $\TreeT \Congr \TreeT'$. Since $\Congr$ is the reflexive, symmetric,
    and transitive closure of $\RewContext$, and since $\RewContext$ is
    confluent, any $\Congr$-equivalence class admits at most one normal
    form. Hence, $\TreeT = \TreeT'$, showing that $\RewContext$ is
    injective.
\end{proof}
\medbreak

Let $\Operad$ be an operad admitting a presentation
$(\GeneratingSet, \Congr)$. When there exists a faithful orientation
$\Rew$ of $\Congr$, the set $\NormalForms_{\RewContext}$ is the
\Def{$\Rew$-basis} of $\Operad$. By
Lemma~\ref{lem:convergent_orientation}, the is a one-to-one
correspondence between the graded sets $\NormalForms_{\RewContext}$ and
$\Operad$. Moreover, $\NormalForms_{\RewContext}$ can be described as
the set of the trees factor-avoiding certain trees, as stated by
Lemma~\ref{lem:normal_forms_avoiding}. These bases were called
\Def{Poincaré-Birkhoff-Witt basis} in~\cite{Hof10} and maintain strong
connections with Koszulity of operads~\cite{GK94,DK10}.
\medbreak

%%%%%%%%%%%%%%%%%%%%%%%%%%%%%%%%%%%%%%%%%%%%%%%%%%%%%%%%%%%%%%%%%%%%%%%%
%%%%%%%%%%%%%%%%%%%%%%%%%%%%%%%%%%%%%%%%%%%%%%%%%%%%%%%%%%%%%%%%%%%%%%%%
\subsection{Refinements of Hilbert series and enumeration}
We introduce a refinement of the Hilbert series of an operad with
respect to an orientation of one of its presentations. A general
strategy to count combinatorial objects with respect to their sizes and
some statistics relying on operads and factor-avoidance in trees is
provided.
\medbreak

%%%%%%%%%%%%%%%%%%%%%%%%%%%%%%%%%%%%%%%%%%%%%%%%%%%%%%%%%%%%%%%%%%%%%%%%
\subsubsection{Statistics}
A \Def{statistics} on a set $X$ is a map $\Statistics : X \to \N$. Let
$\Operad$ be an operad admitting a presentation
$(\GeneratingSet, \Congr)$ faithfully oriented by $\Rew$. Let us define,
for any $\GenA \in \GeneratingSet$, the statistics $\Statistics_\GenA$
on $\Operad$ in the following way. For any $x \in \Operad$, we set
$\Statistics_\GenA(x) := \Deg_\GenA(\TreeT)$ where $\TreeT$ is a
treelike expression on $\GeneratingSet$ of $x$ which is also a normal
form for $\RewContext$. By Lemma~\ref{lem:convergent_orientation}, this
definition is consistent since $\TreeT$ is unique among the trees
satisfying these properties.
\medbreak

%%%%%%%%%%%%%%%%%%%%%%%%%%%%%%%%%%%%%%%%%%%%%%%%%%%%%%%%%%%%%%%%%%%%%%%%
\subsubsection{Refined Hilbert series}
The \Def{$\Rew$-Hilbert series} of $\Operad$ is the series
$\SeriesStatistics_{\Rew}$ of
$\K \AAngle{t, q, \AlphabetQ_\GeneratingSet}$ defined by
\begin{equation}
    \SeriesStatistics_{\Rew} := \FactorEnumSeries\Par{\SetP_{\Rew}}.
\end{equation}
In other words, $\SeriesStatistics_{\Rew}$ is the enumerative image
of the characteristic series of the $\GeneratingSet$-trees
factor-avoiding the trees appearing as left members of~$\Rew$.
\medbreak

\begin{Proposition} \label{prop:hilbert_series_operads_basis}
    Let $\Operad$ be a combinatorial operad admitting a presentation
    $(\GeneratingSet, \Congr)$ faithfully oriented by $\Rew$. Then,
    $\SeriesStatistics_{\Rew}$ is the series wherein the coefficient of
    \begin{math}
        t^n q^d
        q_{\GenA_1}^{\alpha_1} \dots q_{\GenA_\ell}^{\alpha_\ell},
    \end{math}
    $n \geq 1$, $d \geq 0$, $\alpha_i \geq 0$, $i \in [\ell]$, is the
    number of elements $x$ of $\Operad$ of arity $n$, degree $d$, and
    such that $\Statistics_{\GenA_i}(x) = \alpha_i$ for
    all~$i \in [\ell]$.
\end{Proposition}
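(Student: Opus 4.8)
The plan is to combine three facts established earlier in the paper: the bijection of Lemma~\ref{lem:convergent_orientation} between $\NormalForms_{\RewContext}(n)$ and $\Operad(n)$, the identification of $\NormalForms_{\RewContext}$ as the set of $\GeneratingSet$-trees factor-avoiding $\SetP_{\Rew}$ from Lemma~\ref{lem:normal_forms_avoiding}, and the combinatorial interpretation of coefficients of the enumerative image already recorded in the paragraph preceding Proposition~\ref{prop:enumeration_composition}. First I would recall that, by definition, $\SeriesStatistics_{\Rew} = \FactorEnumSeries(\SetP_{\Rew}) = \Enum(\FactorSeries(\SetP_{\Rew}))$, where $\FactorSeries(\SetP_{\Rew})$ is the characteristic series of the set $S$ of all $\GeneratingSet$-trees factor-avoiding $\SetP_{\Rew}$. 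By the remark following the definition of $\Enum$, the coefficient of $t^n q^d q_{\GenA_1}^{\alpha_1} \dots q_{\GenA_\ell}^{\alpha_\ell}$ in $\Enum(\CharacteristicSeries{S})$ is precisely the number of trees $\TreeT \in S$ with arity $n$, degree $d$, and trace $q_{\GenA_1}^{\alpha_1} \dots q_{\GenA_\ell}^{\alpha_\ell}$; that is, with $\Deg_{\GenA_i}(\TreeT) = \alpha_i$ for all $i \in [\ell]$ (and no internal node labeled by any generator outside $\{\GenA_1, \dots, \GenA_\ell\}$, which is forced by the monomial having only those parameters).

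Next I would transport this count along the bijection. By Lemma~\ref{lem:normal_forms_avoiding}, $S = \NormalForms_{\RewContext}$, and by Lemma~\ref{lem:convergent_orientation}, for each $n \geq 1$ the evaluation map $\Eval$ restricts to a bijection from $\NormalForms_{\RewContext}(n)$ to $\Operad(n)$. Under this bijection, the unique normal-form treelike expression $\TreeT$ of an element $x \in \Operad$ is exactly the tree used in the definition of the statistics $\Statistics_{\GenA}$, so $\Statistics_{\GenA_i}(x) = \Deg_{\GenA_i}(\TreeT)$ by definition, and the degree $d$ of $x$ in $\Operad$ equals $\Deg(\TreeT)$. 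Hence counting trees $\TreeT \in \NormalForms_{\RewContext}$ with prescribed arity, degree, and labelled-node statistics is the same as counting elements $x \in \Operad$ with the corresponding arity, degree, and statistics values. Combining the two paragraphs yields the stated interpretation of the coefficient of $t^n q^d q_{\GenA_1}^{\alpha_1} \dots q_{\GenA_\ell}^{\alpha_\ell}$ in $\SeriesStatistics_{\Rew}$.

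Since the operad $\Operad$ is assumed combinatorial, $\NormalForms_{\RewContext}(n) \cong \Operad(n)$ is finite for every $n$, which guarantees these coefficients are finite and the statement is meaningful; and the enumerative image is always well-defined as noted in the paragraph after the definition of $\Enum$. I do not expect a genuine obstacle here — the proposition is essentially a bookkeeping assembly of the preceding lemmas. The only point requiring a little care is to check that the trace monomial on the tree side translates correctly into the tuple of statistics $(\Statistics_{\GenA_i})$ on the operad side, i.e. that the fixed normal form is the unique representative realizing these values; this is exactly what the consistency remark in the definition of $\Statistics_\GenA$, itself resting on Lemma~\ref{lem:convergent_orientation}, supplies.
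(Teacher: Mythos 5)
Your proposal is correct and follows the same route as the paper's proof: identify $\FactorSeries(\SetP_{\Rew})$ with the characteristic series of $\NormalForms_{\RewContext}$ via Lemma~\ref{lem:normal_forms_avoiding}, transport along the bijection of Lemma~\ref{lem:convergent_orientation}, and conclude from the definitions of the statistics $\Statistics_\GenA$ and of the enumerative image. You simply spell out the bookkeeping that the paper leaves implicit.
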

\begin{proof}
    By Lemmas~\ref{lem:normal_forms_avoiding}
    and~\ref{lem:convergent_orientation},
    $\FactorSeries\Par{\SetP_{\Rew}}$ is the characteristic series of
    the $\Rew$-basis $\NormalForms_{\RewContext}$ of $\Operad$. The
    statement of the proposition follows from the definitions of the
    statistics $\Statistics_\GenA$, $\GenA \in \GeneratingSet$, and of
    the enumerative images of $\GeneratingSet$-tree series.
\end{proof}
\medbreak

When $\Operad$ is combinatorial, observe that the $\Rew$-Hilbert series
of $\Operad$ is a refinement of the Hilbert series of $\Operad$. Indeed,
by Proposition~\ref{prop:hilbert_series_operads_basis}, the
specialization
\begin{math}
    {\SeriesStatistics_{\Rew}}_
    {|q := 1, q_\GenA := 1, \GenA \in \GeneratingSet}
\end{math}
is the Hilbert series $\HilbertSeries_\Operad(t)$ of~$\Operad$.
\medbreak

%%%%%%%%%%%%%%%%%%%%%%%%%%%%%%%%%%%%%%%%%%%%%%%%%%%%%%%%%%%%%%%%%%%%%%%%
\subsubsection{Operads as tools for enumeration}
The results presented in the previous sections can be applied, together
with operad theory, for enumerative prospects. Indeed, if $X$ is a
combinatorial graded set for which we want to describe its generating
series $\GeneratingSeries_X(t)$, a strategy consists in
\begin{enumerate}
    \item endowing $X$ with partial composition maps
    \begin{equation}
        \circ_i : X(n) \times X(m) \to X(n + m - 1),
        \qquad 1 \leq i \leq n, 1 \leq m
    \end{equation}
    so that $X$ admits the structure of an operad;
    \smallbreak

    \item exhibiting a presentation $(\GeneratingSet, \Congr)$ of the
    operad on $X$ just introduced;
    \smallbreak

    \item providing a faithful orientation $\Rew$ of $\Congr$;
    \smallbreak

    \item computing the $\Rew$-Hilbert series $\SeriesStatistics_{\Rew}$
    of the considered operad on $X$.
\end{enumerate}
By Proposition~\ref{prop:hilbert_series_operads_basis},
$\SeriesStatistics_{\Rew}$ is a refinement of $\GeneratingSeries_X(t)$
and hence, the knowledge of $\SeriesStatistics_{\Rew}$ leads to the
knowledge of $\GeneratingSeries_X(t)$. Moreover, by
Lemma~\ref{lem:normal_forms_avoiding},
Proposition~\ref{prop:system_enumeration_trees_avoiding} provides a way
to express $\SeriesStatistics_{\Rew}$ by a system of equations. Also,
this strategy to enumerate $X$ passes by the definition of the
statistics $\Statistics_\GenA$, $\GenA \in \GeneratingSet$, on $X$ which
could be of independent interest.
\medbreak

%%%%%%%%%%%%%%%%%%%%%%%%%%%%%%%%%%%%%%%%%%%%%%%%%%%%%%%%%%%%%%%%%%%%%%%%
%%%%%%%%%%%%%%%%%%%%%%%%%%%%%%%%%%%%%%%%%%%%%%%%%%%%%%%%%%%%%%%%%%%%%%%%
%%%%%%%%%%%%%%%%%%%%%%%%%%%%%%%%%%%%%%%%%%%%%%%%%%%%%%%%%%%%%%%%%%%%%%%%
\section{Examples about series from operads} \label{sec:examples}
This last section contains examples of application of the theory of
operads for enumeration. We recall here the definitions of some operads
involving combinatorial graded sets and apply the results of
Sections~\ref{sec:pattern_avoidance}
and~\ref{sec:operads_for_enumeration} to obtain expressions for their
generating series taking into account of some statistics.
\medbreak

To not overload the notation, the results of the previous sections are
used here implicitly. Moreover, we shall not explicitly prove the
faithfulness of the considered orientations. This can easily be done
by using general results about rewrite rules on trees, as presented
for instance in~\cite{Gir18}.
\medbreak

%%%%%%%%%%%%%%%%%%%%%%%%%%%%%%%%%%%%%%%%%%%%%%%%%%%%%%%%%%%%%%%%%%%%%%%%
%%%%%%%%%%%%%%%%%%%%%%%%%%%%%%%%%%%%%%%%%%%%%%%%%%%%%%%%%%%%%%%%%%%%%%%%
\subsection{On some classical operads}
We begin by considering some well-known and classical operads involving
families of trees: bicolored Schröder trees, binary trees, and based
noncrossing trees.
\medbreak

%%%%%%%%%%%%%%%%%%%%%%%%%%%%%%%%%%%%%%%%%%%%%%%%%%%%%%%%%%%%%%%%%%%%%%%%
\subsubsection{$2$-associative operad}
The \Def{$2$-associative operad}~\cite{LR06} is the operad $\TwoAs$
having the presentation $\Par{\GeneratingSet_\TwoAs, \Congr}$ where
\begin{equation}
    \GeneratingSet_\TwoAs := \GeneratingSet_\TwoAs(2)
    := \{\GenA, \GenB\},
\end{equation}
and $\Congr$ is the finest operad congruence satisfying
\begin{subequations}
\begin{equation} \label{equ:relation_2as_1}
    \GenA \circ_1 \GenA \Congr \GenA \circ_2 \GenA,
\end{equation}
\begin{equation} \label{equ:relation_2as_2}
    \GenB \circ_1 \GenB \Congr \GenB \circ_2 \GenB.
\end{equation}
\end{subequations}
The first dimensions of this operad are
\begin{equation}
    1, 2, 6, 22, 90, 394, 1806, 8558
\end{equation}
and form Sequence~\OEIS{A006318} of~\cite{Slo}. This operad can be
realized as an operad of bicolored Schröder trees (see for
instance~\cite{Gir18}), where a \Def{bicolored Schröder tree} is a
Schröder tree such that each internal node is assigned with an element
of the set $\{0, 1\}$ and all nodes that have a father labeled by $0$
(resp. $1$) are labeled by $1$ (resp. $0$). A definition of Schröder trees is given in
Section~\ref{subsubsec:schroder_trees}.
By setting that the arity of
a bicolored Schröder tree is the number of its leaves, the set of all
bicolored Schröder trees forms a combinatorial graded set.
\medbreak

The orientation $\Rew$ of $\Congr$ obtained by
orienting~\eqref{equ:relation_2as_1} and~\eqref{equ:relation_2as_2} from
left to right is faithful. The $\Rew$-Hilbert series of $\TwoAs$
satisfies
\begin{equation}
    \SeriesStatistics_{\Rew}
    =
    \FactorPrefixEnumSeries\Par{
        \TreeLeft{\GenA}{\GenA} \TreeLeft{\GenB}{\GenB},
        \emptyset}
\end{equation}
where
\begin{subequations}
\begin{equation}
    \SeriesStatistics_{\Rew} =
    t
    + q q_{\GenA} \FactorPrefixEnumSeries\Par{\SetP_{\Rew},
        \CorollaTwo{\GenA}}
        \SeriesStatistics_{\Rew}
    + q q_{\GenB} \FactorPrefixEnumSeries\Par{\SetP_{\Rew},
        \CorollaTwo{\GenB}}
        \SeriesStatistics_{\Rew},
\end{equation}
\begin{equation}
    \FactorPrefixEnumSeries\Par{\SetP_{\Rew}, \CorollaTwo{\GenA}} =
    t +
    q q_{\GenB} \FactorPrefixEnumSeries\Par{\SetP_{\Rew},
        \CorollaTwo{\GenB}}
        \SeriesStatistics_{\Rew},
\end{equation}
\begin{equation}
    \FactorPrefixEnumSeries\Par{\SetP_{\Rew}, \CorollaTwo{\GenB}} =
    t +
    q q_{\GenA} \FactorPrefixEnumSeries\Par{\SetP_{\Rew},
        \CorollaTwo{\GenA}}
        \SeriesStatistics_{\Rew}.
\end{equation}
\end{subequations}
This series satisfies the algebraic equation
\begin{equation}
    \SeriesStatistics_{\Rew} =
    \frac{t + q^2 q_{\GenA} q_{\GenB} t \SeriesStatistics_{\Rew}^2
    + q^2 q_{\GenA} q_{\GenB} \SeriesStatistics_{\Rew}^3}
    {1 - t q q_{\GenA} - t q q_{\GenB}},
\end{equation}
and writes as
\begin{multline}
    \SeriesStatistics_{\Rew} =
    t
    + \Par{q_\GenA + q_\GenB} q t^2
    + \Par{q_\GenA^2 + 4 q_\GenA q_\GenB + q_\GenB^2} q^2 t^3
    + \Par{q_\GenA^3 + 10 q_\GenA^2 q_\GenB + 10 q_\GenA q_\GenB^2
        + q_\GenB^3} q^3 t^4 \\
    + \Par{q_\GenA^4 + 20 q_\GenA^3 q_\GenB + 48 q_\GenA^2 q_\GenB^2
        + 20 q_\GenA q_\GenB^3 + q_\GenB^4} q^4 t^5 \\
    + \Par{q_\GenA^5 + 35 q_\GenA^4 q_\GenB + 161 q_\GenA^3 q_\GenB^2
        + 161 q_\GenA^2 q_\GenB^3 + 35 q_\GenA q_\GenB^4 + q_\GenB^5}
        q^5 t^6
    + \cdots.
\end{multline}
The statistics $\Statistics_{\GenA}$ and $\Statistics_{\GenB}$ are
related to Triangle~\OEIS{A175124} of~\cite{Slo}. These statistics count
the number of internal nodes labeled by $0$ (or by $1$) in a bicolored
Schröder tree.
\medbreak

%%%%%%%%%%%%%%%%%%%%%%%%%%%%%%%%%%%%%%%%%%%%%%%%%%%%%%%%%%%%%%%%%%%%%%%%
\subsubsection{Dipterous operad}
The \Def{dipterous operad}~\cite{LR03} is the operad $\Dipt$ having the
presentation $\Par{\GeneratingSet_\Dipt, \Congr}$ where
\begin{equation}
    \GeneratingSet_\Dipt := \GeneratingSet_\Dipt(2) := \{\GenA, \GenB\},
\end{equation}
and $\Congr$ is the finest operad congruence satisfying
\begin{subequations}
\begin{equation} \label{equ:relation_dipt_1}
    \GenA \circ_1 \GenA \Congr \GenA \circ_2 \GenA,
\end{equation}
\begin{equation} \label{equ:relation_dipt_2}
    \GenB \circ_1 \GenB \Congr \GenB \circ_2 \GenA.
\end{equation}
\end{subequations}
The dimensions of this operad are the same as the ones of $\TwoAs$ so
that $\Dipt$ can be realized as an operad of bicolored Schröder trees.
\medbreak

The orientation $\Rew$ of $\Congr$ obtained by
orienting~\eqref{equ:relation_dipt_1} from left to right,
and~\eqref{equ:relation_dipt_2} from right to left is faithful.
The $\Rew$-Hilbert series of $\Dipt$ satisfies
\begin{equation}
    \SeriesStatistics_{\Rew}
    =
    \FactorPrefixEnumSeries\Par{
    \TreeLeft{\GenA}{\GenA} \TreeRight{\GenB}{\GenA},
    \emptyset}
\end{equation}
where
\begin{subequations}
\begin{equation}
    \SeriesStatistics_{\Rew} =
    t
    + q q_\GenA
    \FactorPrefixEnumSeries\Par{\SetP_{\Rew}, \CorollaTwo{\GenA}}
        \SeriesStatistics_{\Rew}
    + q q_\GenB
    \SeriesStatistics_{\Rew}
        \FactorPrefixEnumSeries\Par{\SetP_{\Rew}, \CorollaTwo{\GenA}},
\end{equation}
\begin{equation}
    \FactorPrefixEnumSeries\Par{\SetP_{\Rew}, \CorollaTwo{\GenA}} =
    t
    + q q_\GenB
    \SeriesStatistics_{\Rew}
        \FactorPrefixEnumSeries\Par{\SetP_{\Rew}, \CorollaTwo{\GenA}}.
\end{equation}
\end{subequations}
This series satisfies the algebraic equation
\begin{equation}
    \SeriesStatistics_{\Rew} =
    t + t q q_\GenA \SeriesStatistics_{\Rew}
    + q q_\GenB \SeriesStatistics_{\Rew}^2,
\end{equation}
and writes as
\begin{multline}
    \SeriesStatistics_{\Rew} =
    t + \Par{q_\GenA + q_\GenB} q t^2
    + \Par{q_\GenA^2 + 3 q_\GenA q_\GenB + 2 q_\GenB^2} q^2 t^3
    + \Par{q_\GenA^3 + 6 q_\GenA^2q_\GenB + 10 q_\GenA q_\GenB^2
    + 5 q_\GenB^3} q^3 t^4 \\
    + \Par{q_\GenA^4 + 10 q_\GenA^3 q_\GenB + 30q_\GenA^2 q_\GenB^2
    + 35 q_\GenA q_\GenB^3 + 14 q_\GenB^4} q^4 t^5 \\
    + \Par{q_\GenA^5 + 15 q_\GenA^4 q_\GenB + 70 q_\GenA^3 q_\GenB^2
    + 140 q_\GenA^2 q_\GenB^3 + 126 q_\GenA q_\GenB^4 + 42 q_\GenB^5}
    q^5 t^6
    + \cdots.
\end{multline}
The statistics $\Statistics_\GenA$ is related to
Triangle~\OEIS{A060693} of~\cite{Slo}, and the statistics
$\Statistics_\GenB$ is related to Triangle~\OEIS{A088617} of~\cite{Slo}
(one is the mirror image of the other). These statistics count the
number of peaks in Schröder paths (which are some paths in one-to-one
correspondence with bicolored Schröder trees).
\medbreak

%%%%%%%%%%%%%%%%%%%%%%%%%%%%%%%%%%%%%%%%%%%%%%%%%%%%%%%%%%%%%%%%%%%%%%%%
\subsubsection{Duplicial operad}
The \Def{duplicial operad}~\cite{Lod08} is the operad $\Dup$ having the
presentation $\Par{\GeneratingSet_\Dup, \Congr}$ where
\begin{equation}
    \GeneratingSet_\Dup := \GeneratingSet_\Dup(2) := \{\GenA, \GenB\},
\end{equation}
and $\Congr$ is the finest operad congruence satisfying
\begin{subequations}
\begin{equation} \label{equ:relation_dup_1}
    \GenA \circ_1 \GenA \Congr \GenA \circ_2 \GenA,
\end{equation}
\begin{equation} \label{equ:relation_dup_2}
    \GenB \circ_1 \GenA \Congr \GenA \circ_2 \GenB,
\end{equation}
\begin{equation} \label{equ:relation_dup_3}
    \GenB \circ_1 \GenB \Congr \GenB \circ_2 \GenB.
\end{equation}
\end{subequations}
The first dimensions of this operad are
\begin{equation}
    1, 2, 5, 14, 42, 132, 429, 1430
\end{equation}
and form Sequence \OEIS{A000108} of~\cite{Slo}. This operad can be
realized as an operad of binary trees.
\medbreak

The orientation $\Rew$ of $\Congr$ obtained by
orienting~\eqref{equ:relation_dup_1}, \eqref{equ:relation_dup_2},
and~\eqref{equ:relation_dup_3} from left to right is faithful. The
$\Rew$-Hilbert series of $\Dup$ satisfies
\begin{equation}
    \SeriesStatistics_{\Rew}
    =
    \FactorPrefixEnumSeries\Par{
    \TreeLeft{\GenA}{\GenA} \TreeLeft{\GenB}{\GenA}
    \TreeLeft{\GenB}{\GenB},
    \emptyset}
\end{equation}
where
\begin{subequations}
\begin{equation}
    \SeriesStatistics_{\Rew} =
    t
    + q q_\GenA
    \FactorPrefixEnumSeries\Par{\SetP_{\Rew}, \CorollaTwo{\GenA}}
        \SeriesStatistics_{\Rew}
    + q q_\GenB
    \FactorPrefixEnumSeries\Par{\SetP_{\Rew},
        \CorollaTwo{\GenA} \CorollaTwo{\GenB}}
    \SeriesStatistics_{\Rew},
\end{equation}
\begin{equation}
    \FactorPrefixEnumSeries\Par{\SetP_{\Rew}, \CorollaTwo{\GenA}} =
    t
    + q q_\GenB
    \FactorPrefixEnumSeries\Par{\SetP_{\Rew},
        \CorollaTwo{\GenA} \CorollaTwo{\GenB}}
    \SeriesStatistics_{\Rew},
\end{equation}
\begin{equation}
    \FactorPrefixEnumSeries\Par{\SetP_{\Rew},
        \CorollaTwo{\GenA} \CorollaTwo{\GenB}} =
    t.
\end{equation}
\end{subequations}
This series satisfies the algebraic equation
\begin{equation} \label{equ:series_statistics_dup}
    \SeriesStatistics_{\Rew} =
    t + t q q_\GenB \SeriesStatistics_{\Rew}
    + t q q_\GenA \SeriesStatistics_{\Rew}
    + t q^2 q_\GenA q_\GenB \SeriesStatistics_{\Rew}^2,
\end{equation}
and writes as
\begin{multline}
    \SeriesStatistics_{\Rew} =
    t
    + \Par{q_\GenA + q_\GenB} q t^2
    + \Par{q_\GenA^2 + 3 q_\GenA q_\GenB + q_\GenB^2} q^2 t^3
    + \Par{ q_\GenA^3 + 6 q_\GenA^2 q_\GenB + 6 q_\GenA q_\GenB^2
        + q_\GenB^3} q^3 t^4 \\
    + \Par{q_\GenA^4 + 10 q_\GenA^3 q_\GenB + 20 q_\GenA^2 q_\GenB^2
        + 10 q_\GenA q_\GenB^3 + q_\GenB^4} q^4 t^5 \\
    + \Par{q_\GenA^5 + 15 q_\GenA^4 q_\GenB + 50 q_\GenA^3 q_\GenB^2
    + 50 q_\GenA^2 q_\GenB^3 + 15 q_\GenA q_\GenB^4 + q_\GenB^5} q^5 t^6
    + \cdots.
\end{multline}
The statistics $\Statistics_\GenA$ and $\Statistics_\GenB$ are
related to Triangle~\OEIS{A001263} of~\cite{Slo} known as triangle of
Narayana numbers~\cite{Nar55}. These statistics count the number of
edges oriented to the right connecting two internal nodes in a binary
tree (which are in one-to-one correspondence with the elements
of~$\Dup$).
\medbreak

%%%%%%%%%%%%%%%%%%%%%%%%%%%%%%%%%%%%%%%%%%%%%%%%%%%%%%%%%%%%%%%%%%%%%%%%
\subsubsection{Based noncrossing trees}
The \Def{based noncrossing trees operad}~\cite{Cha07} (a study of
algebras over this operad was provided in~\cite{Ler11}) is the operad
$\NCT$ having the presentation $\Par{\GeneratingSet_\NCT, \Congr}$ where
\begin{equation}
    \GeneratingSet_\NCT := \GeneratingSet_\NCT(2) := \{\GenA, \GenB\},
\end{equation}
and $\Congr$ is the finest operad congruence satisfying
\begin{equation} \label{equ:relation_nct_1}
    \GenB \circ_1 \GenA \Congr \GenA \circ_2 \GenB.
\end{equation}
The first dimensions of this operad are
\begin{equation}
    1, 2, 7, 30, 143, 728, 3876, 21318, 120175
\end{equation}
and form Sequence~\OEIS{A006013} of~\cite{Slo}. This operad can be realized as an operad of
based noncrossing trees (see for instance~\cite{Gir18}). A \Def{based noncrossing tree} is
a polygon endowed with some selected edges or diagonals, called \Def{chords}, with the
restriction that the bottom side of the polygon is a chord, that no chord crosses another
one, and that there is exactly one path formed by chords between any two points of the
polygon. By setting that the arity of a based noncrossing tree is its number of points
minus $1$, the set of all based noncrossing trees forms a combinatorial graded set.
\medbreak

The orientation $\Rew$ of $\Congr$ obtained by
orienting~\eqref{equ:relation_nct_1} from left to right is faithful. The
$\Rew$-Hilbert series of $\NCT$ satisfies
\begin{equation}
    \SeriesStatistics_{\Rew}
    =
    \FactorPrefixEnumSeries\Par{
    \TreeLeft{\GenB}{\GenA},
    \emptyset}
\end{equation}
where
\begin{subequations}
\begin{equation}
    \SeriesStatistics_{\Rew} =
    t
    + q q_{\GenA} \SeriesStatistics_{\Rew}^2
    + q q_{\GenB} \FactorPrefixEnumSeries\Par{\SetP_{\Rew},
        \CorollaTwo{\GenA}}
        \SeriesStatistics_{\Rew},
\end{equation}
\begin{equation}
    \FactorPrefixEnumSeries\Par{\SetP_{\Rew}, \CorollaTwo{\GenA}} =
    t +
    q q_{\GenB} \FactorPrefixEnumSeries\Par{\SetP_{\Rew},
        \CorollaTwo{\GenA}}
        \SeriesStatistics_{\Rew}.
\end{equation}
\end{subequations}
This series satisfies the algebraic equation
\begin{equation}
    \SeriesStatistics_{\Rew} =
    t + q \Par{q_\GenA + q_\GenB} \SeriesStatistics_{\Rew}^2
    - q^2 q_\GenA q_\GenB \SeriesStatistics_{\Rew}^3 = 0,
\end{equation}
and writes as
\begin{multline}
    \SeriesStatistics_{\Rew} =
    t
    + \Par{q_\GenA + q_\GenB} q t^2
    + \Par{2 q_\GenA^2 + 3 q_\GenA q_\GenB + 2 q_\GenB^2} q^2 t^3
    + 5 \Par{q_\GenA^3 + 2 q_\GenA^2 q_\GenB + 2 q_\GenA q_\GenB^2
        + q_\GenB^3} q^3 t^4 \\
    + \Par{14 q_\GenA^4 + 35 q_\GenA^3 q_\GenB + 45 q_\GenA^2 q_\GenB^2
        + 35 q_\GenA q_\GenB^3 + 14 q_\GenB^4} q^4 t^5 \\
    + 14 \Par{3 q_\GenA^5 + 9 q_\GenA^4 q_\GenB
    + 14 q_\GenA^3 q_\GenB^2 + 14 q_\GenA^2 q_\GenB^3
    + 9 q_\GenA q_\GenB^4 + 3 q_\GenB^5} q^5 t^6
    + \cdots.
\end{multline}
The triangles related to the statistics $\Statistics_\GenA$ and
$\Statistics_\GenB$ do not appear for the time being in~\cite{Slo}.
\medbreak

%%%%%%%%%%%%%%%%%%%%%%%%%%%%%%%%%%%%%%%%%%%%%%%%%%%%%%%%%%%%%%%%%%%%%%%%
%%%%%%%%%%%%%%%%%%%%%%%%%%%%%%%%%%%%%%%%%%%%%%%%%%%%%%%%%%%%%%%%%%%%%%%%
\subsection{On some operads from monoids}
We shall consider examples of
combinatorial objects endowed with operad structures coming from a
general construction introduced in~\cite{Gir15}. Let us recall the
construction. Let $\Monoid$ be a monoid, that is a set endowed with an
associative product $\Product$ admitting a unit $\Unit_\Monoid$. We
denote by $\T \Monoid$ the graded set wherein for any $n \geq 1$,
$\T \Monoid(n)$ is the set of all words of length $n$ on $\Monoid$, seen
as an alphabet. This graded set $\T \Monoid$ is endowed with the partial
composition maps $\circ_i$ defined for any  $u \in \T \Monoid(n)$,
$v \in \T \Monoid(m)$, and $i \in [n]$, by
\begin{equation}
    u \circ_i v
    := u_1 \dots u_{i - 1}
    \; \Par{u_i \Product v_1} \dots \Par{u_i \Product v_m} \;
    u_{i + 1} \dots u_n.
\end{equation}
It was shown in~\cite{Gir15} that $\T \Monoid$ is an operad admitting
$\Unit_\Monoid \in \T \Monoid(1)$ as unit. Let $\N$ (resp. $\N_\ell$) be
the additive monoid of nonnegative integers (resp. the cyclic monoid of
order $\ell$, $\ell \geq 1$). In particular, the operads $\T \N$ and
$\T \N_\ell$ admit suboperads whose elements can be interpreted as
combinatorial objects.
\medbreak

%%%%%%%%%%%%%%%%%%%%%%%%%%%%%%%%%%%%%%%%%%%%%%%%%%%%%%%%%%%%%%%%%%%%%%%%
\subsubsection{$m$-trees}
For any integer $m \geq 0$, an \Def{$m$-tree} is a planar rooted tree
wherein all internal nodes have arity $m + 1$. By setting that the arity
of an $m$-tree is its number of internal nodes, the set of all $m$-trees
forms a combinatorial graded set.
\medbreak

Let $\FCat{m}$ be the suboperad of $\T \N$ generated by the set
\begin{equation}
    \GeneratingSet_{\FCat{m}}
    := \Bra{00, 01, \dots, 0m}.
\end{equation}
It was shown in~\cite{Gir15} that there is a one-to-one correspondence
between the set $\FCat{m}(n)$ and the set of all $m$-trees of arity
$n \geq 1$. Therefore, $\FCat{m}$ is an operad on $m$-trees. The
dimensions of this operad are provided by the Fuss-Catalan numbers so
that
\begin{equation}
    \# \FCat{m}(n) = \binom{(m + 1)n}{n} \frac{1}{mn + 1}.
\end{equation}
This operad admits the presentation
$\Par{\GeneratingSet_{\FCat{m}}, \Congr}$ where $\Congr$ is the finest
operad congruence satisfying
\begin{equation} \label{equ:congruence_fcat_m}
    \Corolla{0 k_3} \circ_1 \Corolla{0 k_1}
    \Congr
    \Corolla{0 k_1} \circ_2 \Corolla{0 k_2},
    \qquad
    k_3 = k_1 + k_2.
\end{equation}
\medbreak

The orientation $\Rew$ of $\Congr$ obtained by
orienting all relations~\eqref{equ:congruence_fcat_m} from left to right
is faithful. By denoting, for any $k \geq 0$, by $\SetQ_k$ the set
$\Bra{\Corolla{00}, \Corolla{01}, \dots, \Corolla{0k}}$, the
$\Rew$-Hilbert series of $\FCat{k}$ satisfies
\begin{equation}
    \SeriesStatistics_{\Rew}
    =
    \FactorPrefixEnumSeries\Par{
    \TreeLeft{0 k_3}{ 0 k_1} : k_1 \leq k_3, \emptyset}
\end{equation}
where
\begin{subequations}
\begin{equation}
    \SeriesStatistics_{\Rew} =
    t + q \sum_{0 \leq k \leq m} q_{0 k} \;
    \FactorPrefixEnumSeries\Par{\SetP_{\Rew}, \SetQ_k}
    \SeriesStatistics_{\Rew},
\end{equation}
\begin{equation}
    \FactorPrefixEnumSeries\Par{\SetP_{\Rew}, \SetQ_k} =
    t + q \sum_{k + 1 \leq \ell \leq m} q_{0 \ell} \;
    \FactorPrefixEnumSeries\Par{\SetP_{\Rew}, \SetQ_\ell}
    \SeriesStatistics_{\Rew},
    \qquad 0 \leq k \leq m.
\end{equation}
\end{subequations}
By a straightforward computation, we obtain
\begin{equation} \label{equ:series_statistics_fcat_m}
    \SeriesStatistics_{\Rew} =
    t \prod_{0 \leq k \leq m}
    \Par{q q_{0 k} \; \SeriesStatistics_{\Rew} + 1}.
\end{equation}
\medbreak

Let us now focus on the case $m = 1$, for which $\FCat{1}$ is an
operad on binary trees. First, as a particular case
of~\eqref{equ:series_statistics_fcat_m}, the $\Rew$-Hilbert series of
$\FCat{1}$ expresses as
\begin{equation}
    \SeriesStatistics_{\Rew} =
    t \Par{q q_{0 0} \; \SeriesStatistics_{\Rew} + 1}
    \Par{q q_{0 1} \; \SeriesStatistics_{\Rew} + 1}.
\end{equation}
This is the series~\eqref{equ:series_statistics_dup} obtained from
the operad $\Dup$. Moreover, as a particular case
of~\eqref{equ:congruence_fcat_m}, the operad $\FCat{1}$ admits the
presentation $\Par{\GeneratingSet_{\FCat{1}}, \Congr}$ where $\Congr$ is
the finest operad congruence satisfying
\begin{subequations}
\begin{equation} \label{equ:relation_fcat_1_1}
    \Corolla{00} \circ_1 \Corolla{00}
    \Congr
    \Corolla{00} \circ_2 \Corolla{00},
\end{equation}
\begin{equation} \label{equ:relation_fcat_1_2}
    \Corolla{01} \circ_1 \Corolla{00}
    \Congr
    \Corolla{00} \circ_2 \Corolla{01},
\end{equation}
\begin{equation} \label{equ:relation_fcat_1_3}
    \Corolla{01} \circ_1 \Corolla{01}
    \Congr
    \Corolla{01} \circ_2 \Corolla{00}.
\end{equation}
\end{subequations}
The orientation $\Rew$ of $\Congr$ obtained by
orienting~\eqref{equ:relation_fcat_1_1}
and~\eqref{equ:relation_fcat_1_2} from left to right,
and~\eqref{equ:relation_fcat_1_3} from right to left is faithful.
The $\Rew$-Hilbert series of $\FCat{1}$ satisfies
\begin{equation}
    \SeriesStatistics_{\Rew}
    =
    \FactorPrefixEnumSeries\Par{
        \TreeLeft{00}{00} \TreeLeft{01}{00} \TreeRight{01}{00},
        \emptyset}
\end{equation}
where
\begin{subequations}
\begin{equation}
    \SeriesStatistics_{\Rew} =
    t
    + q q_{00}
        \FactorPrefixEnumSeries\Par{\SetP_{\Rew}, \CorollaTwo{00}}
        \SeriesStatistics_{\Rew}
    + q q_{01}
        \FactorPrefixEnumSeries\Par{\SetP_{\Rew},
        \CorollaTwo{00}}^2,
\end{equation}
\begin{equation}
    \FactorPrefixEnumSeries\Par{\SetP_{\Rew}, \CorollaTwo{00}} =
    t
    + q q_{01}
        \FactorPrefixEnumSeries\Par{\SetP_{\Rew},
        \CorollaTwo{00}}^2.
\end{equation}
\end{subequations}
This series satisfies also
\begin{equation}
    \SeriesStatistics_{\Rew} =
    \frac{1 - \sqrt{1 - 4 t q q_{01}}}
    {q \Par{q_{00} \sqrt{1 - 4 t q q_{01}} - q_{00} + 2 q_{01}}}
\end{equation}
and writes as
\begin{multline}
    \SeriesStatistics_{\Rew} =
    t
    + \Par{q_{00} + q_{01}} q t^2
    + \Par{q_{00}^2 + 2 q_{00} q_{01} + 2 q_{01}^2} q^2 t^3
    + \Par{q_{00}^3 + 3 q_{00}^2 q_{01} + 5 q_{00} q_{01}^2
        + 5 q_{01}^3} q^3 t^4 \\
    + \Par{q_{00}^4 + 4 q_{00}^3 q_{01} + 9 q_{00}^2 q_{01}^2
        + 14 q_{00} q_{01}^3 + 14 q_{01}^4} q^4 t^5 \\
    + \Par{q_{00}^5 + 5 q_{00}^4 q_{01} + 14 q_{00}^3 q_{01}^2
    + 28 q_{00}^2 q_{01}^3 + 42 q_{00} q_{01}^4 + 42 q_{01}^5} q^5 t^6
    + \cdots,
\end{multline}
The statistics $\Statistics_{00}$ and $\Statistics_{01}$ are related to
Triangles~\OEIS{A033184} and~\OEIS{A009766} of~\cite{Slo}, known as
(the mirror image of) Catalan triangle. These statistics count the
jump-length in a binary tree (see for instance~\cite{Kra04}).
\medbreak

%%%%%%%%%%%%%%%%%%%%%%%%%%%%%%%%%%%%%%%%%%%%%%%%%%%%%%%%%%%%%%%%%%%%%%%%
\subsubsection{Schröder trees} \label{subsubsec:schroder_trees}
A \Def{Schröder tree} is a planar rooted tree wherein all internal nodes
have arity $2$ or more. By setting that the arity of a Schröder tree is
its number of leaves minus $1$, the set of all Schröder trees forms a
combinatorial graded set.
\medbreak

Let $\Schr$ be the suboperad of $\T \N$ generated by the set
\begin{equation}
    \GeneratingSet_\Schr := \{00, 01, 10\}.
\end{equation}
It was shown in~\cite{Gir15} that there is a one-to-one correspondence
between the set $\Schr(n)$ and the set of all Schröder trees of arity
$n \geq 1$. Therefore, $\Schr$ is an operad on Schröder trees.
The first dimensions of this operad are
\begin{equation}
    1, 3, 11, 45, 197, 903, 4279, 20793
\end{equation}
and form Sequence~\OEIS{A001003} of~\cite{Slo}. This operad admits the
presentation $\Par{\GeneratingSet_\Schr, \Congr}$ where $\Congr$ is the
finest operad congruence satisfying
\begin{subequations}
\begin{equation} \label{equ:relation_schr_1}
    \Corolla{00} \circ_1 \Corolla{00}
    \Congr
    \Corolla{00} \circ_2 \Corolla{00},
\end{equation}
\begin{equation} \label{equ:relation_schr_2}
    \Corolla{01} \circ_1 \Corolla{10}
    \Congr
    \Corolla{10} \circ_2 \Corolla{01},
\end{equation}
\begin{equation} \label{equ:relation_schr_3}
    \Corolla{00} \circ_1 \Corolla{01}
    \Congr
    \Corolla{00} \circ_2 \Corolla{10},
\end{equation}
\begin{equation} \label{equ:relation_schr_4}
    \Corolla{01} \circ_1 \Corolla{00}
    \Congr
    \Corolla{00} \circ_2 \Corolla{01},
\end{equation}
\begin{equation} \label{equ:relation_schr_5}
    \Corolla{00} \circ_1 \Corolla{10}
    \Congr
    \Corolla{10} \circ_2 \Corolla{00},
\end{equation}
\begin{equation} \label{equ:relation_schr_6}
    \Corolla{01} \circ_1 \Corolla{01}
    \Congr
    \Corolla{01} \circ_2 \Corolla{00},
\end{equation}
\begin{equation} \label{equ:relation_schr_7}
    \Corolla{10} \circ_1 \Corolla{00}
    \Congr
    \Corolla{10} \circ_2 \Corolla{10}.
\end{equation}
\end{subequations}
The orientation $\Rew$ of $\Congr$ obtained by
orienting~\eqref{equ:relation_schr_1}, \eqref{equ:relation_schr_2},
\eqref{equ:relation_schr_3}, \eqref{equ:relation_schr_4},
\eqref{equ:relation_schr_5}, and~\eqref{equ:relation_schr_6} from left
to right, and~\eqref{equ:relation_schr_7} from right to left is
faithful. The $\Rew$-Hilbert series of $\Schr$ satisfies
\begin{equation}
    \SeriesStatistics_{\Rew}
    =
    \FactorPrefixEnumSeries\Par{
    \TreeLeft{00}{00} \TreeLeft{01}{10} \TreeLeft{00}{01}
    \TreeLeft{01}{00} \TreeLeft{00}{10} \TreeLeft{01}{01}
    \TreeRight{10}{10}, \emptyset}
\end{equation}
where
\begin{subequations}
\begin{multline}
    \SeriesStatistics_{\Rew} =
    t
    + q q_{00}
    \FactorPrefixEnumSeries\Par{\SetP_{\Rew},
        \CorollaTwo{00} \CorollaTwo{01} \CorollaTwo{10}}
    \SeriesStatistics_{\Rew}
    + q q_{01}
    \FactorPrefixEnumSeries\Par{\SetP_{\Rew},
        \CorollaTwo{00} \CorollaTwo{01} \CorollaTwo{10}}
    \SeriesStatistics_{\Rew} \\
    + q q_{10}
    \SeriesStatistics_{\Rew}
    \FactorPrefixEnumSeries\Par{\SetP_{\Rew}, \CorollaTwo{10}},
\end{multline}
\begin{equation}
    \FactorPrefixEnumSeries\Par{\SetP_{\Rew},
        \CorollaTwo{00} \CorollaTwo{01} \CorollaTwo{10}}
    = t,
\end{equation}
\begin{equation}
    \FactorPrefixEnumSeries\Par{\SetP_{\Rew}, \CorollaTwo{10}} =
    t
    + q q_{00}
    \FactorPrefixEnumSeries\Par{\SetP_{\Rew},
        \CorollaTwo{00} \CorollaTwo{01} \CorollaTwo{10}}
    \SeriesStatistics_{\Rew}
    + q q_{01}
    \FactorPrefixEnumSeries\Par{\SetP_{\Rew},
        \CorollaTwo{00} \CorollaTwo{01} \CorollaTwo{10}}
    \SeriesStatistics_{\Rew}.
\end{equation}
\end{subequations}
This series satisfies the algebraic equation
\begin{equation}
    t
    + \Par{t q \Par{q_{00} + q_{01} + q_{10}} - 1}
        \SeriesStatistics_{\Rew}
    + \Par{t q^2 \Par{q_{00} q_{10} + q_{01} q_{10}}}
        \SeriesStatistics_{\Rew}^2
    = 0
\end{equation}
and writes as
\begin{multline}
    \SeriesStatistics_{\Rew} =
    t
    + \Par{q_{00} + q_{01} + q_{10}} q t^2
    + \Par{q_{00}^2 + 2 q_{00} q_{01} + 3 q_{00} q_{10} + q_{01}^2
        + 3 q_{01} q_{10} + q_{10}^2} q^2 t^3 \\
    + \Par{q_{00}^3 + 3 q_{00}^2 q_{01} + 6 q_{00}^2 q_{10}
        + 3 q_{00} q_{01}^2 + 12 q_{00} q_{01} q_{10}
        + 6 q_{00} q_{10}^2
        \mright. \\ \mleft.
        + q_{01}^3 + 6 q_{01}^2 q_{10} + 6 q_{01} q_{10}^2
        + q_{10}^3} q^3 t^4
    + \cdots.
\end{multline}
The statistics $\Statistics_{00}$ and $\Statistics_{10}$ are related to
Triangle~\OEIS{A126216} of~\cite{Slo}, and the statistics
$\Statistics_{01}$ is related to Triangle~\OEIS{A114656} of~\cite{Slo}.
\medbreak

%%%%%%%%%%%%%%%%%%%%%%%%%%%%%%%%%%%%%%%%%%%%%%%%%%%%%%%%%%%%%%%%%%%%%%%%
\subsubsection{Motzkin paths}
A \Def{Motzkin path} is a path in $\N^2$ connecting the points $(0, 0)$
and $(n - 1, 0)$ by steps in the set $\Bra{(1, -1), (1, 0), (1, 1)}$. By
setting that the arity of a Motzkin path is $n$, the set of all Motzkin
paths forms a combinatorial graded set.
\medbreak

Let $\Motz$ be the suboperad of $\T \N$ generated by the set
\begin{equation}
    \GeneratingSet_{\Motz} := \Bra{00, 010}.
\end{equation}
It was shown in~\cite{Gir15} that there is a one-to-one correspondence
between the set $\Motz(n)$ and the set of all Motzkin paths of arity
$n \geq 1$. Therefore, $\Motz$ is an operad on Motzkin paths. The first
dimensions of this operad are
\begin{equation}
    1, 1, 2, 4, 9, 21, 51, 127
\end{equation}
and form Sequence~\OEIS{A001006} of~\cite{Slo}. This operad admits the
presentation $\Par{\GeneratingSet_\Motz, \Congr}$ where $\Congr$ is the
finest operad congruence satisfying
\begin{subequations}
\begin{equation} \label{equ:relation_motz_1}
    \Corolla{00} \circ_1 \Corolla{00}
    \Congr
    \Corolla{00} \circ_2 \Corolla{00},
\end{equation}
\begin{equation} \label{equ:relation_motz_2}
    \Corolla{010} \circ_1 \Corolla{00}
    \Congr
    \Corolla{00} \circ_2 \Corolla{010},
\end{equation}
\begin{equation} \label{equ:relation_motz_3}
    \Corolla{00} \circ_1 \Corolla{010}
    \Congr
    \Corolla{010} \circ_3 \Corolla{00},
\end{equation}
\begin{equation} \label{equ:relation_motz_4}
    \Corolla{010} \circ_1 \Corolla{010}
    \Congr
    \Corolla{010} \circ_3 \Corolla{010}.
\end{equation}
\end{subequations}
The orientation $\Rew$ of $\Congr$ obtained by
orienting~\eqref{equ:relation_motz_1},
\eqref{equ:relation_motz_2}, \eqref{equ:relation_motz_3},
and~\eqref{equ:relation_motz_4} from left to right is faithful. The
$\Rew$-Hilbert series of $\Motz$ satisfies
\begin{equation}
    \SeriesStatistics_{\Rew}
    =
    \FactorPrefixEnumSeries\Par{
    \TreeLeft{00}{00}
    \begin{tikzpicture}[Centering,xscale=0.22,yscale=0.22]
        \node(0)at(0.00,-4.00){};
        \node(2)at(2.00,-4.00){};
        \node(4)at(3.00,-2.00){};
        \node(5)at(4.00,-2.00){};
        \node[NodeST](1)at(1.00,-2.00){$00$};
        \node[NodeST](3)at(3.00,0.00){$010$};
        \draw[Edge](0)--(1);
        \draw[Edge](1)--(3);
        \draw[Edge](2)--(1);
        \draw[Edge](4)--(3);
        \draw[Edge](5)--(3);
        \node(r)at(3.00,2){};
        \draw[Edge](r)--(3);
    \end{tikzpicture}
    \begin{tikzpicture}[Centering,xscale=0.22,yscale=0.22]
        \node(0)at(0.00,-4.00){};
        \node(2)at(1.00,-4.00){};
        \node(3)at(2.00,-4.00){};
        \node(5)at(4.00,-2.00){};
        \node[NodeST](1)at(1.00,-2.00){$010$};
        \node[NodeST](4)at(3.00,0.00){$00$};
        \draw[Edge](0)--(1);
        \draw[Edge](1)--(4);
        \draw[Edge](2)--(1);
        \draw[Edge](3)--(1);
        \draw[Edge](5)--(4);
        \node(r)at(3.00,2){};
        \draw[Edge](r)--(4);
    \end{tikzpicture}
    \begin{tikzpicture}[Centering,xscale=0.22,yscale=0.2]
        \node(0)at(0.00,-4.67){};
        \node(2)at(1.00,-4.67){};
        \node(3)at(2.00,-4.67){};
        \node(5)at(3.00,-2.33){};
        \node(6)at(4.00,-2.33){};
        \node[NodeST](1)at(1.00,-2.33){$010$};
        \node[NodeST](4)at(3.00,0.00){$010$};
        \draw[Edge](0)--(1);
        \draw[Edge](1)--(4);
        \draw[Edge](2)--(1);
        \draw[Edge](3)--(1);
        \draw[Edge](5)--(4);
        \draw[Edge](6)--(4);
        \node(r)at(3.00,2){};
        \draw[Edge](r)--(4);
    \end{tikzpicture},
    \emptyset}
\end{equation}
where
\begin{subequations}
\begin{equation}
    \SeriesStatistics_{\Rew} =
    t
    + q q_{00} \FactorPrefixEnumSeries\Par{\SetP_{\Rew},
        \CorollaTwo{00} \CorollaThree{010}}
    \SeriesStatistics_{\Rew}
    + q q_{010} \FactorPrefixEnumSeries\Par{\SetP_{\Rew},
        \CorollaTwo{00} \CorollaThree{010}}
    \SeriesStatistics_{\Rew}^2,
\end{equation}
\begin{equation}
    \FactorPrefixEnumSeries\Par{\SetP_{\Rew},
        \CorollaTwo{00} \CorollaThree{010}}
    = t.
\end{equation}
\end{subequations}
This series satisfies the algebraic equation
\begin{equation}
    \SeriesStatistics_{\Rew} =
    t + t q q_{00} \SeriesStatistics_{\Rew}
    + t q q_{010} \SeriesStatistics_{\Rew}^2
\end{equation}
and writes as
\begin{multline}
    \SeriesStatistics_{\Rew} =
    t + q q_{00} t^2
    + \Par{q^2 q_{00}^2 + q q_{010}} t^3
    + \Par{q^3 q_{00}^3 + 3 q^2 q_{00} q_{010}} t^4 \\
    + \Par{q^4 q_{00}^4 + 6 q^3 q_{00}^2 q_{010}
        + 2 q^2 q_{010}^2} t^5
    + \Par{q^5 q_{00}^5 + 10 q^4 q_{00}^3 q_{010}
        + 10 q^3 q_{00} q_{010}^2} t^6 \\
    + \Par{q^6 q_{00}^6 + 15 q^5 q_{00}^4 q_{010}
        + 30 q^4 q_{00}^2 q_{010}^2 + 5 q^3 q_{010}^3} t^7
    + \cdots.
\end{multline}
The statistics $\Statistics_{00}$ and $\Statistics_{010}$ are related to
Triangle~\OEIS{A055151} of~\cite{Slo}. These statistics count the number
of steps $(1, 1)$ in a Motzkin path.
\medbreak

%%%%%%%%%%%%%%%%%%%%%%%%%%%%%%%%%%%%%%%%%%%%%%%%%%%%%%%%%%%%%%%%%%%%%%%%
\subsubsection{Directed animals}
A \Def{directed animal} is a finite subset $A$ of $\N^2$ containing
$(0, 0)$ and if $(x, y) \in A \setminus \{(0, 0)\}$, then
$(x - 1, y) \in A$ or $(x, y - 1) \in A$. By setting that the arity of
a directed animal is its cardinality, the set of all directed animals
forms a combinatorial graded set.
\medbreak

Let $\DA$ be the suboperad of $\T \N_3$ generated by the set
\begin{equation}
    \GeneratingSet_\DA := \{00, 01\}.
\end{equation}
It was shown in~\cite{Gir15} that there is a one-to-one correspondence
between the set $\DA(n)$ and the set of all directed animals of arity
$n \geq 1$. Therefore, $\DA$ is an operad on directed animals. The first
dimensions of this operad are
\begin{equation}
    1, 2, 5, 13, 35, 96, 267, 750
\end{equation}
and form Sequence~\OEIS{A005773} of~\cite{Slo}. This operad admit the
presentation $\Par{\GeneratingSet_\DA, \Congr}$ where $\Congr$ is the
finest operad congruence satisfying
\begin{subequations}
\begin{equation} \label{equ:relation_da_1}
    \Corolla{00} \circ_1 \Corolla{00}
    \Congr
    \Corolla{00} \circ_2 \Corolla{00},
\end{equation}
\begin{equation} \label{equ:relation_da_2}
    \Corolla{01} \circ_1 \Corolla{00}
    \Congr
    \Corolla{00} \circ_2 \Corolla{01},
\end{equation}
\begin{equation} \label{equ:relation_da_3}
    \Corolla{01} \circ_1 \Corolla{01}
    \Congr
    \Corolla{01} \circ_2 \Corolla{00},
\end{equation}
\begin{equation} \label{equ:relation_da_4}
    \Par{\Corolla{00} \circ_1 \Corolla{01}} \circ_2 \Corolla{01}
    \Congr
    \Par{\Corolla{01} \circ_2 \Corolla{01}} \circ_3 \Corolla{01}.
\end{equation}
\end{subequations}
The orientation $\Rew$ of $\Congr$ obtained by
orienting~\eqref{equ:relation_da_1}, \eqref{equ:relation_da_2} from
left to right, and~\eqref{equ:relation_da_3}
and~\eqref{equ:relation_da_4} from right to left, is faithful. The
$\Rew$-Hilbert series of $\DA$ satisfies
\begin{equation}
    \SeriesStatistics_{\Rew}
    =
    \FactorPrefixEnumSeries\Par{\TreeLeft{00}{00}
    \TreeLeft{01}{00} \TreeRight{01}{00}
    \begin{tikzpicture}[Centering,xscale=0.19,yscale=0.21]
        \node(0)at(0.00,-1.75){};
        \node(2)at(2.00,-3.50){};
        \node(4)at(4.00,-5.25){};
        \node(6)at(6.00,-5.25){};
        \node[NodeST](1)at(1.00,0.00){01};
        \node[NodeST](3)at(3.00,-1.75){01};
        \node[NodeST](5)at(5.00,-3.50){01};
        \draw[Edge](0)--(1);
        \draw[Edge](2)--(3);
        \draw[Edge](3)--(1);
        \draw[Edge](4)--(5);
        \draw[Edge](5)--(3);
        \draw[Edge](6)--(5);
        \node(r)at(1.00,1.31){};
        \draw[Edge](r)--(1);
    \end{tikzpicture},
    \emptyset}
\end{equation}
where
\begin{subequations}
\begin{equation}
    \SeriesStatistics_{\Rew} =
    t
    + q q_{00} \FactorPrefixEnumSeries\Par{\SetP_{\Rew},
            \CorollaTwo{00}} \SeriesStatistics_{\Rew}
    + q q_{01} \FactorPrefixEnumSeries\Par{\SetP_{\Rew},
            \CorollaTwo{00}}
        \FactorPrefixEnumSeries\Par{\SetP_{\Rew},
            \CorollaTwo{00} \TreeRight{01}{01}},
\end{equation}
\begin{equation}
    \FactorPrefixEnumSeries\Par{\SetP_{\Rew}, \CorollaTwo{00}} =
    t
    + q q_{01} \FactorPrefixEnumSeries\Par{\SetP_{\Rew},
            \CorollaTwo{00}}
        \FactorPrefixEnumSeries\Par{\SetP_{\Rew},
            \CorollaTwo{00} \TreeRight{01}{01}},
\end{equation}
\begin{equation}
    \FactorPrefixEnumSeries\Par{\SetP_{\Rew},
    \CorollaTwo{00} \TreeRight{01}{01}} =
    t
    + q q_{01} \FactorPrefixEnumSeries\Par{\SetP_{\Rew},
            \CorollaTwo{00}}
        \FactorPrefixEnumSeries\Par{\SetP_{\Rew},
            \CorollaTwo{00} \CorollaTwo{01}
            \TreeRight{01}{01}},
\end{equation}
\begin{equation}
    \FactorPrefixEnumSeries\Par{\SetP_{\Rew},
        \CorollaTwo{00} \CorollaTwo{01} \TreeRight{01}{01}} =
    t.
\end{equation}
\end{subequations}
This series satisfies
\begin{equation}
    \SeriesStatistics_{\Rew} =
    \frac{1 - \sqrt{1 - 2 t q q_{01} - 3 t^2 q^2 q_{01}^{2}}
        - t q {\Par{2 q_{00} + q_{01}}}}{
    2 t q^2 \Par{q_{00}^{2} + q_{00} q_{01} + q_{01}^{2}} - 2 q q_{00}}
\end{equation}
and writes as
\begin{multline}
    \SeriesStatistics_{\Rew} =
    t + \Par{q_{00} + q_{01}} q t^2
    + \Par{q_{00}^2 + 2 q_{00} q_{01} + 2 q_{01}^2} q^2 t^3
    + \Par{q_{00}^3 + 3 q_{00}^2 q_{01} + 5 q_{00} q_{01}^2
        + 4 q_{01}^3} q^3 t^4 \\
    + \Par{q_{00}^4 + 4 q_{00}^3 q_{01} + 9 q_{00}^2 q_{01}^2
        + 12 q_{00} q_{01}^3 + 9 q_{01}^4} q^4 t^5 \\
    + \Par{q_{00}^5 + 5 q_{00}^4 q_{01} + 14 q_{00}^3 q_{01}^2
    + 25 q_{00}^2 q_{01}^3 + 30 q_{00} q_{01}^4 + 21 q_{01}^5} q^5 t^6
    + \cdots,
\end{multline}
The statistics $\Statistics_{00}$ is related to
Triangle~\OEIS{A064189} of~\cite{Slo}, and the statistics
$\Statistics_{01}$ is related to Triangle~\OEIS{A026300} of~\cite{Slo}
(one is the mirror image of the other).
\medbreak

%%%%%%%%%%%%%%%%%%%%%%%%%%%%%%%%%%%%%%%%%%%%%%%%%%%%%%%%%%%%%%%%%%%%%%%%
%%%%%%%%%%%%%%%%%%%%%%%%%%%%%%%%%%%%%%%%%%%%%%%%%%%%%%%%%%%%%%%%%%%%%%%%
%%%%%%%%%%%%%%%%%%%%%%%%%%%%%%%%%%%%%%%%%%%%%%%%%%%%%%%%%%%%%%%%%%%%%%%%
\bibliographystyle{alpha}
\bibliography{Bibliography}

\begin{thebibliography}{GPPT12}

\bibitem[BN98]{BN98}
F.~Baader and T.~Nipkow.
\newblock {\em {Term rewriting and all that}}.
\newblock Cambridge University Press, Cambridge, 1998.

\bibitem[Cha02]{Cha02}
F.~Chapoton.
\newblock {Rooted trees and an exponential-like series}.
\newblock {\em \Arxiv{math/0209104}}, 2002.

\bibitem[Cha07]{Cha07}
F.~Chapoton.
\newblock {The anticyclic operad of moulds}.
\newblock {\em Int. Math. Res. Notices}, 20:Art. ID rnm078, 36, 2007.

\bibitem[Cha08]{Cha08}
F.~Chapoton.
\newblock {Operads and algebraic combinatorics of trees}.
\newblock {\em Sém. Lothar. Combin.}, 58, 2008.

\bibitem[DK10]{DK10}
V.~Dotsenko and A.~Khoroshkin.
\newblock {Gr{\"o}bner bases for operads}.
\newblock {\em Duke Math. J.}, 153(2):363--396, 2010.

\bibitem[DKS20]{DKS20}
C.~Defant, N.~Kravitz, and A.~Sah.
\newblock {Supertrees}.
\newblock {\em Electron. J. Comb.}, 27, 2020.

\bibitem[Gir15]{Gir15}
S.~Giraudo.
\newblock {Combinatorial operads from monoids}.
\newblock {\em J. Algebr. Comb.}, 41(2):493--538, 2015.

\bibitem[Gir18]{Gir18}
S.~Giraudo.
\newblock {\em {Nonsymmetric Operads in Combinatorics}}.
\newblock Springer Nature Switzerland AG, 2018.

\bibitem[Gir19]{Gir19}
S.~Giraudo.
\newblock {Colored operads, series on colored operads, and combinatorial
  generating systems}.
\newblock {\em Discrete Math.}, 342(6):1624--1657, 2019.

\bibitem[GJ79]{GJ79}
I.~P. Goulden and D.~M. Jackson.
\newblock {An inversion theorem for cluster decompositions of sequences with
  distinguished subsequences}.
\newblock {\em J. London Math. Soc.}, 20(3):567--576, 1979.

\bibitem[GK94]{GK94}
V.~Ginzburg and M.~M. Kapranov.
\newblock {Koszul duality for operads}.
\newblock {\em Duke Math. J.}, 76(1):203--272, 1994.

\bibitem[GPPT12]{GPPT12}
N.~Gabriel, K.~Peske, L.~Pudwell, and S.~Tay.
\newblock {Pattern avoidance in ternary trees}.
\newblock {\em J. Integer Seq.}, 15(1):Article 12.1.5, 20, 2012.

\bibitem[Hof10]{Hof10}
E.~Hoffbeck.
\newblock {A Poincar\'e-Birkhoff-Witt criterion for Koszul operads}.
\newblock {\em Manuscripta Math.}, 131(1-2):87--110, 2010.

\bibitem[Knu97]{Knu97}
D.~Knuth.
\newblock {\em {The Art of Computer Programming, volume 1: Fundamental
  Algorithms}}.
\newblock Addison Wesley Longman, 3rd edition, 1997.

\bibitem[KP15]{KP15}
A.~Khoroshkin and D.~Piontkovski.
\newblock {On generating series of finitely presented operads}.
\newblock {\em J. Algebra}, 426:377--429, 2015.

\bibitem[Kra04]{Kra04}
W.~Krandick.
\newblock {Trees and jumps and real roots}.
\newblock {\em J. Comput. Appl. Math.}, 162(1):51--55, 2004.

\bibitem[Ler11]{Ler11}
P.~Leroux.
\newblock L-algebras, triplicial-algebras, within an equivalence of categories
  motivated by graphs.
\newblock {\em Comm. Algebra}, 39(8):2661--2689, 2011.

\bibitem[Lod05]{Lod05}
J.-L. Loday.
\newblock {Inversion of integral series enumerating planar trees}.
\newblock {\em Sém. Lothar. Combin.}, 53, 2005.
\newblock Art. B53d.

\bibitem[Lod08]{Lod08}
J.-L. Loday.
\newblock {Generalized bialgebras and triples of operads}.
\newblock {\em Astérisque}, 320:x+116, 2008.

\bibitem[LR03]{LR03}
J.-L. Loday and M.~Ronco.
\newblock {Algèbres de Hopf colibres}.
\newblock {\em C. R. Math.}, 337(3):153--158, 2003.

\bibitem[LR06]{LR06}
J.-L. Loday and M.~Ronco.
\newblock On the structure of cofree {H}opf algebras.
\newblock {\em J. Reine Angew. Math.}, 592:123--155, 2006.

\bibitem[LV12]{LV12}
J.-L. Loday and B.~Vallette.
\newblock {\em {Algebraic Operads}}, volume 346 of {\em Grundlehren der
  mathematischen Wissenschaften}.
\newblock Springer, Heidelberg, 2012.
\newblock Pages xxiv+634.

\bibitem[Mé15]{Men15}
M.~A. Méndez.
\newblock {\em {Set operads in combinatorics and computer science}}.
\newblock SpringerBriefs in Mathematics. Springer, Cham, 2015.

\bibitem[Nar55]{Nar55}
T.V. Narayana.
\newblock {Sur les treillis formés par les partitions d'un entier et leurs
  applications à la théorie des probabilités}.
\newblock {\em C. R. Acad. Sci. Paris}, 240:1188--1189, 1955.

\bibitem[NZ99]{NZ99}
J.~Noonan and D.~Zeilberger.
\newblock {The Goulden-Jackson cluster method: extensions, applications and
  implementations}.
\newblock {\em J. Differ. Equ. Appl.}, 5(4-5):355--377, 1999.

\bibitem[Par93]{Par93}
S.~F. Parker.
\newblock {\em {The combinatorics of functional composition and inversion}}.
\newblock PhD thesis, Brandeis University, 1993.

\bibitem[Row10]{Row10}
E.~S. Rowland.
\newblock {Pattern avoidance in binary trees}.
\newblock {\em J. Comb. Theory A}, 117(6):741--758, 2010.

\bibitem[Slo]{Slo}
N.~J.~A. Sloane.
\newblock {The On-Line Encyclopedia of Integer Sequences}.
\newblock \url{https://oeis.org/}.

\end{thebibliography}

\end{document}